\date{}
\theoremstyle{definition}
\newtheorem{thm}{Theorem}[section]
\newtheorem{definition}{Definition}[section]
\newtheorem{cor}{Corollary}[section]
\newtheorem{example}[thm]{Example}
\theoremstyle{remark}
\newtheorem{remark}{\textbf{Remark}}[section]
\begin{document}
\Large
	\begin{center}
		Orbitwise expansive maps
		
		\hspace{10pt}

		Debasish Bhattacharjee$^1$,  Humayan Kobir$^2$
         , Santanu Acharjee$^{3}$\\
		
		$^{1,2,3}$ Department of Mathematics\\
        Gauhati University\\
        Guwahati-781014, Assam, India\\
		e-mails: $^{1}$debabh2@gmail.com, $^{2}$humayankobir60@gmail.com, $^{3}$sacharjee326@gmail.com
			\end{center}
	
	\hspace{10pt}
	\normalsize
\begin{abstract}
 This study defines an orbitwise expansive point (OE) as a point, such as $x$ in a metric space $(X,\rho)$, if there is a number $d>0$ such that the orbits of a few points inside an arbitrary open sphere will maintain a distance greater than $d$ from the corresponding points of the orbit of $x$ at least once. The point $x$ is referred to as the relatively orbitwise expansive point (ROE) in the previously described scenario if $d$ is replaced with the radius of the open sphere whose orbit is investigated and whose centre is $x$. 
 We also define OE (ROE) set. We prove that arbitrary union of OE (ROE) set is again OE (ROE) set and every limit point of an OE set is an OE point. We show that, rather than the other way around, Utz's expansive map or Kato's CW-expansive map implies OE (ROE) map. We utilise the concept of OE(ROE) to analyse a time-varying dynamical system and investigate its relevance to certain traits associated with expansiveness.\\
\end{abstract}	
\textbf{Key Points: Expansive map, CW$-$expansive map, Ball expanding map, Dense set, Limit point.}\\
\textbf{2020 MSC: 54H20; 37B20.} 

\section{Introduction} 
\hspace{0.5cm} Chaos is one of the consequences of non-linear dynamical systems, and it is distinguished by its sensitivity to the initial conditions. Let $f$ be a map on a metric space $(X,\rho)$.  An Orbit or a trajectory of a point $x$ is a sequence $\{f^n(x):n\in Z\}$. Since $1950$, when Utz \cite{Utz} introduced the unstable self-homeomorphism $f$ on a metric space $(X,\rho)$, the separation of trajectories has been a subject of interest to numerous researchers. This self-homeomorphism has the property that any two points, when progressing (forward or backward) through their trajectories, will find themselves separated at a distance greater than $d>0$ at some point. It is clear that expansiveness implies the notion of sensitivity to initial conditions, which is the kernel in the definition of Devaney’s chaos \cite{DL}. In $1952$, Schwartzman \cite{SZ} considered the expansiveness of $f$ in the positive time direction of trajectories. These functions are also known as positive expansive maps. Subsequently, the literature has incorporated the expansiveness in numerous variations. 
In $1970$, Reddy \cite{Reddy} replaced the uniform expansive constant by a pointwise expansive constant. According to Ruelle \cite{Ruelle}, a dynamical system $f :X\rightarrow X$ defined on a metric space $(X, \rho)$ is expanding if there exists $\epsilon>0$ and $\lambda>1$ such that
whenever $\rho(x, y) < \epsilon$ then, $\rho(f (x),f (y)) > \lambda \rho(x, y)$. The natural extension of it is that whenever $\rho(x, y) < \epsilon$ then, $\rho(f^n (x),f^n (y)) > \lambda \rho(x, y)$. In \cite{RD}, Thakkar and Das defined an expansive map on a time-varying dynamical system. A time-varying dynamical system can be seen in any moving image on a television screen \cite{RD}. In \cite{AF}, Fedeli defined a map that is positively ep-expansive on a dense subset of a metric space. In addition to the numerous variations of expansive maps, numerous researchers investigated expansiveness in a variety of spatial contexts. Otafudu et al. \cite{OO} conducted a study on expansive homeomorphisms in quasi-metric space, while M. Achigar \cite{AM} investigated expansive systems on lattices.  \\

In a chaotic regime, trajectory expansiveness should be visible but it is not restricted to every trajectory. In view of that, Kato \cite{HK}  said $f:X\rightarrow X$ is continuum-wise expansive if the  diameter of the set $\{f^n(x):x\in A\}$ is greater than some constant (called expansive constant), where $A$ is a sub continuum of $X$. Barwell et al. \cite{BC} defined an expanding map on a subset and an expanding ball on a compact metric space. In \cite{CM}, Morales defined $f:X\rightarrow X$ as $n-$expansive if for all $x\in X$, the closed ball $\{y\in X: \rho(f^n(x),f^n(y))\leq\delta, \forall n\in\mathbf{Z}\}$ contains at most $n$ elements, i.e., at most $n$ trajectories may not be visible, but the remaining will be visible. Later on, Li and Zhang \cite{JL} said that at most countable, trajectories are not visible, and the remaining will be visible. Since 2012, there has been a significant interest among academicians in $n-$expansive maps \cite{CM}. In \cite{1}, Lee et al. studied measure $N-$expansive system and in \cite{2}, Lee et al. studied kinematic $N-$expansive continuous dynamical system. In \cite{3}, Bomi Shin generalized the concept of expansive measure to continuum-wise expansive measure and continuum-wise $N-$expansive measure. In \cite{AB}, Barzanouni defined  $f:X\rightarrow X$ as finite orbit expansive if for every infinite set $A$, $f^n(A)\notin U_i$ for all $i=1,2,...,n$, where $\{U_1,U_2,...,U_n\}$ is a finite open cover of a compact metric space $(X,\rho)$, and if there is $A\in\{U_1,U_2,...,U_n\}$ such that for every $x\in X, \{y:\rho(f^n(x),f^n(y))\in A,\forall n\in\mathbf{Z}\}$ is finite then $f$ is said to be topologically finite expansive homeomorphism. So, our paper falls into this category. \\

Our initial proposal in this paper is to introduce orbitwise expansive (OE) maps, which contain expansive, CW-expansive maps. Subsequently, in the OE scenario, a relatively orbitwise expansive (ROE) map is established by substituting the constant expansion factor with an initial condition-dependent expansion factor. In our definitions, for each open sphere, there exists at least one point whose trajectory with the trajectory of the centre of that sphere must not be visible.  This paper is organized as follows: Section 2 is dedicated to the definition of important words and the statement of the theorems that will be utilised in subsequent sections. Section 3 introduces the definitions of OE map and OE point in a metric space. Additionally, we prove that while an expansive map and a CW-expansive map imply an OE map, the reverse is not often true. We establish generalisations of existing results and discover novel findings pertaining to dense sets and limit points. Section 4 introduces the concept of the ROE map in a metric space, presents several findings, and demonstrates that the arbitrary union of the ROE set is again ROE. In section 5, we  apply the concepts of OE and ROE in time-varying dynamical system and discuss some results and give one counter example that OE map may not be expansive map in  time-varying dynamical system. Section 6 contains a discussion.

\section{Preliminaries}
In this section, we recall some definitions and results that will be needed in the later sections.

\begin{definition}\cite{Utz}
Let $f$ be a homeomorphism of a metric space $(X,\rho)$ onto itself. Then, $f$ will be called unstable (expansive) on $X$, provided there is a number $\delta(f, X)>0$ (called an instability constant) such that corresponding to each pair of distinct points $x$, $y$ of $X$, there is an
integer $n(x, y)$ for which
$$\rho(f^n(x),f^n(y)) >\delta$$
\end{definition}
\begin{thm}\cite{Utz}
Suppose $f(X)=X$ is an unstable homeomorphism on a compact metric space $(X,\rho)$. Then, the function $f^m(X)=X$ is unstable for any integer $m\neq 0$.
\end{thm}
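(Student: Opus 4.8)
The plan is to manufacture, from the instability constant $\delta$ of $f$, an explicit instability constant $\delta'$ for $f^m$, the only genuinely new ingredient being the uniform continuity of the finitely many iterates $f^0,f^1,\dots,f^{|m|-1}$ — and this is precisely where compactness of $X$ is used.

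First I would reduce to the case $m>0$. If $m<0$, observe that $f^{-1}$ is again an expansive homeomorphism of $X$ with the \emph{same} constant $\delta$: for distinct $x,y$, the integer $n=n(x,y)$ witnessing $\rho(f^n(x),f^n(y))>\delta$ also witnesses $\rho\big((f^{-1})^{-n}(x),(f^{-1})^{-n}(y)\big)>\delta$. Since $f^m=(f^{-1})^{-m}$ with $-m>0$, the positive case applied to $f^{-1}$ finishes this case. Note also that $f^m(X)=X$ is immediate because $f$ is onto, so $f^m$ is genuinely a self-homeomorphism of $X$.

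Now assume $m>0$. By compactness of $X$, each of $f,f^2,\dots,f^{m-1}$ is uniformly continuous, so there is $\delta'\in(0,\delta]$ such that $\rho(a,b)\le\delta'$ implies $\rho(f^r(a),f^r(b))\le\delta$ for every $r\in\{0,1,\dots,m-1\}$ (the case $r=0$ being automatic). I claim $\delta'$ is an instability constant for $f^m$. Given distinct $x,y\in X$, expansiveness of $f$ yields $n=n(x,y)\in\mathbf{Z}$ with $\rho(f^n(x),f^n(y))>\delta$. Write $n=mk+r$ with $k\in\mathbf{Z}$ and $0\le r\le m-1$, and put $a=(f^m)^k(x)=f^{mk}(x)$, $b=(f^m)^k(y)=f^{mk}(y)$. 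Then $f^n(x)=f^r(a)$ and $f^n(y)=f^r(b)$, so $\rho(f^r(a),f^r(b))>\delta$; by the contrapositive of the defining property of $\delta'$, this forces $\rho(a,b)>\delta'$, that is, $\rho\big((f^m)^k(x),(f^m)^k(y)\big)>\delta'$. Hence $f^m$ is expansive with instability constant $\delta'$.

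The only real obstacle is the bookkeeping that converts a separation occurring at time $n$ for $f$ into a separation at time $k=(n-r)/m$ for $f^m$: uniform continuity is exactly what "absorbs" the bounded leftover exponent $r$, and for this reason the hypothesis that $X$ is compact cannot be dropped.
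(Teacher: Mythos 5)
Your proof is correct, and it is the standard argument for this classical fact of Utz. Note that the paper itself states this theorem without proof (it is quoted from Utz's 1950 article as a preliminary), so there is no in-text proof to compare against; but your key device — using compactness to get uniform continuity of the finitely many iterates $f^0,\dots,f^{m-1}$ so as to absorb the remainder exponent $r$ in $n=mk+r$, plus passing to $f^{-1}$ to handle $m<0$ — is exactly the mechanism the authors later deploy in their own Theorem on $f^m$ being an OE map, so your route matches the spirit of the paper as well.
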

\begin{thm}\cite{BF}
Let $X$ and $Y$ be metric spaces with metrices $\rho_1$ and $\rho_2$, respectively, and $\phi$ is a self-expansive map on $X$. If $g$ is a homeomorphism of $X$ onto $Y$ such that $g^{-1}$ is uniformly continuous, then $g\phi g^{-1}$ is an expansive self-homeomorphism of $Y$.
\end{thm}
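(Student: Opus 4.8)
The plan is to set $\psi = g\phi g^{-1}$ and show directly that $\psi$ is an expansive self-homeomorphism of $Y$ by producing an instability constant for it. First I would record two elementary facts. Fact one: $\psi$ is a homeomorphism of $Y$ onto itself, being the composition of the homeomorphisms $g^{-1}\colon Y\to X$, $\phi\colon X\to X$ and $g\colon X\to Y$. Fact two: for every integer $n$ one has $\psi^{n}=g\phi^{n}g^{-1}$; this follows by an immediate induction for $n\ge 0$ (the interior factors $g^{-1}g$ telescope), holds trivially for $n=0$, and extends to negative $n$ because $g$ and $\phi$ are invertible with $\psi^{-1}=g\phi^{-1}g^{-1}$. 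Consequently $\psi^{n}(u)=g\big(\phi^{n}(g^{-1}(u))\big)$ for all $u\in Y$ and all $n\in\mathbf{Z}$.

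Next I would convert the expansivity data through the uniform continuity hypothesis. Let $\delta=\delta(\phi,X)>0$ be an instability constant for $\phi$. Since $g^{-1}$ is uniformly continuous, applying its definition with $\varepsilon=\delta$ yields an $e>0$ such that $\rho_{2}(p,q)<e$ implies $\rho_{1}(g^{-1}(p),g^{-1}(q))<\delta$ for all $p,q\in Y$. I will use this in contrapositive form: whenever $\rho_{1}(g^{-1}(p),g^{-1}(q))\ge\delta$, one has $\rho_{2}(p,q)\ge e$.

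Now take distinct points $u,v\in Y$ and set $x=g^{-1}(u)$, $y=g^{-1}(v)$; since $g^{-1}$ is injective, $x\ne y$. By expansiveness of $\phi$ there is an integer $n=n(x,y)$ with $\rho_{1}(\phi^{n}(x),\phi^{n}(y))>\delta$. Apply the contrapositive statement to $p=\psi^{n}(u)=g(\phi^{n}(x))$ and $q=\psi^{n}(v)=g(\phi^{n}(y))$, observing that $g^{-1}(p)=\phi^{n}(x)$ and $g^{-1}(q)=\phi^{n}(y)$; this gives $\rho_{2}(\psi^{n}(u),\psi^{n}(v))\ge e$. Since such an $n$ exists for every pair of distinct points of $Y$, the number $e$ (or $e/2$, if one wants the defining inequality to be strict) is an instability constant for $\psi$, so $\psi=g\phi g^{-1}$ is an expansive self-homeomorphism of $Y$.

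The argument is short, and the only genuinely delicate point is the bookkeeping around uniform continuity: one must invoke uniform continuity of $g^{-1}$ rather than of $g$, and in the correct logical direction, so that a single $e$ works simultaneously for all pairs in $Y$. Because $g$ itself need not be uniformly continuous, the conclusion is obtained first in the non-strict form $\rho_{2}\ge e$ and then loosened by a constant factor to recover a strict instability constant. No compactness of $X$ or $Y$ is needed, and essentially the same reasoning applies to the pointwise (Reddy-type) expansive constant by letting $\delta$, and hence $e$, depend on the chosen pair.
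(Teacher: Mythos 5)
Your proof is correct and is the standard argument (essentially Bryant's original one): conjugate, note $\psi^{n}=g\phi^{n}g^{-1}$, and transport the instability constant through the uniform continuity of $g^{-1}$ in contrapositive form, with the harmless weakening from a strict to a non-strict inequality repaired by halving the constant. The paper states this theorem only as a cited preliminary from Bryant (1962) and gives no proof of its own, so there is nothing to compare beyond confirming that your route is the intended one.
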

\begin{thm}\cite{BF}
If $A\subset X$, $X\setminus A$ is finite, and $\phi$ is a self-homeomorphism of $X$ that is expansive on $A$, then $\phi$ is expansive on $X$.
\end{thm}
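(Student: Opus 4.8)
The plan is to adjoin the points of $X\setminus A$ to $A$ one at a time, exploiting that $X\setminus A$ is finite. Write $F=X\setminus A$; if $F=\emptyset$ there is nothing to prove, so I would assume $F=\{p_1,\dots,p_k\}$ and fix an expansive constant $\delta_A>0$ for $\phi$ on $A$. The first step is to establish the single-point claim: \emph{if $B\subseteq X$ and $\phi$ is expansive on $B$ with constant $\delta_B>0$, then for every $p\in X\setminus B$ the map $\phi$ is expansive on $B\cup\{p\}$}. Granting this, I would apply it successively with $B=A,\ p=p_1$, then $B=A\cup\{p_1\},\ p=p_2$, and so on --- at each stage the adjoined point lies outside the current set, since $p_1,\dots,p_k$ are distinct and none lies in $A$ --- and after $k$ applications conclude that $\phi$ is expansive on $A\cup F=X$.

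To prove the claim, I would note that the only distinct pairs of points of $B\cup\{p\}$ for which separation is not yet guaranteed by $\delta_B$ are the pairs $\{p,y\}$ with $y\in B$. So it suffices to produce $\eta>0$ such that for each $y\in B$ there is $n\in\mathbf{Z}$ with $\rho(\phi^n(p),\phi^n(y))>\eta$; then $\min\{\delta_B,\eta\}$ serves as an expansive constant for $\phi$ on $B\cup\{p\}$. Suppose no such $\eta$ exists. Then for every $m\in\mathbb{N}$ there is $y_m\in B$ with $\rho(\phi^n(p),\phi^n(y_m))\le \tfrac{1}{m}$ for all $n\in\mathbf{Z}$.

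The key move is then the following. Choose $m,m'$ large enough that $\tfrac{1}{m}+\tfrac{1}{m'}<\delta_B$; by the triangle inequality $\rho(\phi^n(y_m),\phi^n(y_{m'}))<\delta_B$ for every $n$, so expansiveness of $\phi$ on $B$ forces $y_m=y_{m'}$. Hence $(y_m)$ is eventually constant, say equal to $y^{\ast}\in B$, and then $\rho(\phi^n(p),\phi^n(y^{\ast}))\le \tfrac{1}{m}$ for arbitrarily large $m$ gives $\phi^n(p)=\phi^n(y^{\ast})$ for all $n\in\mathbf{Z}$; taking $n=0$ yields $p=y^{\ast}\in B$, contradicting $p\notin B$. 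This would complete the proof.

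I expect this last paragraph to be the only real obstacle: ruling out a sequence in $B$ whose whole orbit uniformly shadows that of $p$. Everything else is bookkeeping --- the finitely many pairs $\{p_i,p_j\}$ are trivially separated at $n=0$, and the finiteness of $X\setminus A$ is exactly what makes the single-point claim iterable finitely often. I also note that the argument needs neither compactness of $X$ nor uniform continuity of $\phi^{-1}$; only that $\phi^n$ is defined for all $n\in\mathbf{Z}$. As an alternative to the induction, one could argue directly: run the contradiction argument for each $p_i$ with $B=A$ to obtain constants $\eta_i$, bound the finitely many pairs $\{p_i,p_j\}$ below at $n=0$, and take a sufficiently small common constant.
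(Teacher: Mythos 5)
The paper states this result without proof, citing Bryant \cite{BF}; your argument is correct and is essentially the classical one: adjoin the finitely many points of $X\setminus A$ one at a time, and for each new point $p$ use the triangle inequality together with expansiveness on $B$ to show that at most one point of $B$ can have its entire orbit uniformly close to the orbit of $p$, which forces the existence of a positive separation constant $\eta$ and lets $\min\{\delta_B,\eta\}$ serve for $B\cup\{p\}$. You are also right that neither compactness of $X$ nor uniform continuity of $\phi^{-1}$ is needed, only that $\phi^{n}$ is defined for all $n\in\mathbf{Z}$.
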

\begin{definition}\cite{Reddy}
A homeomorphism $f$ on a compact metric $(X,\rho)$ is pointwise expansive if for each $x$ in $X$ there is a positive number $c(x)$ such that $x\neq y$ in $X$, we have $$\rho(f^n(x),f^n(y))>c(x), \text{ for some integer }n. $$
\end{definition}

\begin{definition}\cite{SM}
Let $(X,\rho)$ be a metric space. Then  $\phi$ is called uniformly expansive if there is a $\delta>0$ such that for each $\theta$ with $0<\theta\leq \delta$ there exists a positive integer
$k(\theta)$ such that if $x, y\in X$ satisfy $\rho(x, y)>\theta$ then $\rho(\phi^n(x),\phi^n(y)) >\delta$ for some
n with $\lvert n\rvert < k(\theta)$. The number $\delta$ is called a uniformly expansive constant for $\phi$.
\end{definition}
\begin{thm}\cite{SM}
Let $A$ be a dense subset of $X$ and suppose that $\phi$ is uniformly expansive on $A$. Then, $\phi$ is uniformly expansive on $X$.
\end{thm}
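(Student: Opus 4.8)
The plan is to transfer uniformly expansive behaviour from the dense set $A$ to all of $X$ by approximating an arbitrary pair of points of $X$ by points of $A$ and then controlling the finitely many relevant iterates of $\phi$ by uniform continuity. Let $\delta>0$ be a uniformly expansive constant for $\phi$ on $A$ (in the sense of \cite{SM}), with associated threshold function $\theta\mapsto k(\theta)$. I claim that $\delta_X:=\delta/2$ is a uniformly expansive constant for $\phi$ on $X$, with threshold function $k_X(\theta):=k(\theta/2)$; verifying this claim is the whole proof.

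First I would fix $\theta$ with $0<\theta\le\delta_X=\delta/2$. Then $0<\theta/2\le\delta$, so $k(\theta/2)$ is defined, and the index set $F_\theta:=\{n\in\mathbf{Z}: |n|<k(\theta/2)\}$ is \emph{finite}. Since each iterate $\phi^{n}$ with $n\in F_\theta$ is continuous — and, in the standard compact-metric-space setting for expansive homeomorphisms, uniformly continuous — I would choose $\eta>0$ with $\eta<\theta/4$ so small that for every $n\in F_\theta$ and all $u,v\in X$, $\rho(u,v)<\eta$ implies $\rho(\phi^{n}(u),\phi^{n}(v))<\delta/4$ (take the minimum of the finitely many moduli of uniform continuity and also shrink below $\theta/4$).

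Next, given $x,y\in X$ with $\rho(x,y)>\theta$, I would use density of $A$ to pick $a,b\in A$ with $\rho(x,a)<\eta$ and $\rho(y,b)<\eta$. Then $\rho(a,b)\ge\rho(x,y)-\rho(x,a)-\rho(y,b)>\theta-2\eta>\theta/2$, and since $\theta/2\le\delta$ the uniform expansiveness of $\phi$ on $A$ applied at level $\theta/2$ yields some $n\in F_\theta$ with $\rho(\phi^{n}(a),\phi^{n}(b))>\delta$. For that $n$ the triangle inequality and the choice of $\eta$ give
\[
\rho(\phi^{n}(x),\phi^{n}(y))\ \ge\ \rho(\phi^{n}(a),\phi^{n}(b))-\rho(\phi^{n}(a),\phi^{n}(x))-\rho(\phi^{n}(b),\phi^{n}(y))\ >\ \delta-\frac{\delta}{4}-\frac{\delta}{4}=\frac{\delta}{2}=\delta_X,
\]
with $|n|<k(\theta/2)=k_X(\theta)$, which is exactly the required conclusion, so $\phi$ is uniformly expansive on $X$.

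The only delicate point, and the step I expect to be the main obstacle, is the passage from "$\rho(\phi^{n}(a),\phi^{n}(b))$ large" to "$\rho(\phi^{n}(x),\phi^{n}(y))$ large": it requires a modulus of continuity for $\phi^{n}$ that is uniform over $X$ and valid simultaneously for all $n$ below the threshold $k(\theta/2)$. Finiteness of $F_\theta$ reduces "for all such $n$" to a finite minimum, so the real content is uniform continuity of each individual $\phi^{n}$ — automatic when $X$ is compact (the usual hypothesis for expansive homeomorphisms) and otherwise available provided $\phi$ and $\phi^{-1}$ are uniformly continuous. Everything else is bookkeeping with the constants $\delta/2$, $\theta/2$, and $\eta<\theta/4$.
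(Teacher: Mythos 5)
The paper does not prove this statement at all: it is quoted verbatim from Sears \cite{SM} as a preliminary, so there is no in-paper argument to compare against. Your proof is the standard (and correct) density argument: the constants check out ($\theta\le\delta/2$ gives $\rho(a,b)>\theta-2\eta>\theta/2\le\delta$, so the level-$\theta/2$ hypothesis on $A$ applies, and the triangle inequality then yields separation $>\delta/2$ at some $|n|<k(\theta/2)$). One refinement on the point you flag as the "main obstacle": uniform continuity of the iterates $\phi^{n}$ is not actually needed. Since the quantifiers in the conclusion only require $\delta_X$ and $k_X(\theta)$ to be uniform --- not $\eta$ --- you may choose $\eta$ \emph{after} fixing the pair $x,y$, using ordinary pointwise continuity of the finitely many maps $\phi^{n}$, $n\in F_\theta$, at the two points $x$ and $y$. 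Thus the argument goes through for any self-homeomorphism of a metric space, with no compactness or uniform-continuity hypothesis, which matches the generality in which the theorem is stated.
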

\begin{definition}\cite{HK}
A homeomorphism $f:X\rightarrow X$ is CW-expansive if there is a positive number $c>0$ (called expansive constant) such that if $A$ is a non-degenerate sub-continuum of $X$, then there is an integer $n=n(A)\in\mathbb{Z}$ such that $diamf^n(A)>c$, where $diamS=sup\{\rho(x,y):x, y\in S\}$ for any subset $S$ of $X$. 
\end{definition}
\begin{definition}\cite{HK}
    A map $f: X\rightarrow X$ is called positively continuum-wise expansive if there is a positive 
number $c > 0$ such that if A is a nondegenerate subcontinuum of X, then there is a natural 
number $n$ such that $diamf^n(A)>c$,  where $diamS=sup\{\rho(x,y):x, y\in S\}$ for any subset $S$ of $X$.
\end{definition}


\begin{definition}\cite{BC}
Let $(X, \rho)$ be a compact metric space, and $f : X\rightarrow X$ be continuous. If there are $\delta >0$, $\mu > 1$ such that $\rho(f(x), f(y)) \geq \mu \rho(x, y)$ provided that $x, y \in A\subseteq X$ and $\rho(x, y) < \delta$ then we say that $f$ is expanding on $A$. If there is a
neighbourhood $U$ of $A$ such that the property holds for every $x, y \in U$, we say that $f$ is expanding on a neighbourhood of $A$. In the case that $A = X$ we simply say that $f$ is expanding.
\end{definition}
\begin{definition}\cite{BC}
For a compact metric space $X$ with metric $\rho$ and a subset $A \subseteq X$,
we say that a continuous map $f : X \rightarrow X$ is ball expanding on $A$ if there are a $\mu > 1$ and a $\nu > 0$ such that for every $x \in A$ and every $\epsilon < \nu$ we have that
$B_{\mu\epsilon}(f(x)) \subseteq f (B_{\epsilon}(x))$.

\end{definition}

\begin{definition}\cite{CM}
Let $(X,\rho)$ be a metric space. A homeomorphism $f:X\rightarrow X$ is said to be $n-$expansive if there is a $n-$expansive constant $\delta>0$ for $f$ such that for every $x\in X$, $B_\delta(x)$ has at most $n$ elements, where $B_\delta(x)=\{y\in X:\rho(f^n(x),f^n(y))\leq\delta\}$.
\end{definition}

\begin{definition}\cite{JL}
    Let $(X,\rho)$ be a metric space. A homeomorphism $f:X\rightarrow X$ is said to be $\aleph_0-$expansive if there is a $\aleph_0-$expansive constant $\delta>0$ for $f$ such that for every $x\in X$, $B_\delta(x)$ has at most countable elements, where $B_\delta(x)=\{y\in X:\rho(f^n(x),f^n(y))\leq\delta\}$.
\end{definition}

\begin{definition}\cite{RD}
Let $(X, \rho)$ be a metric space and $f_n :X\rightarrow X$ be a sequence
of continuous maps, $n=0,1,2,...$. The time-varying map $F = \{f_n\}_{n=0}^\infty$ is said to be expansive if there exists a constant $c>0$ (called expansive constant) such that for any $x, y\in X, x\neq y, \rho(F_n(x), F_n(y))>c$, for some $n\geq 0$, where 
$F_n = f_n\circ f_{n-1}\circ...\circ f_1 \circ f_0$, for all $n = 0,1,2,...$.
\end{definition}
\begin{definition}\cite{RD}
Let $(X, \rho_1)$ and $(Y, \rho_2)$ be two metric spaces. Let $F=\{f_n\}_{n=0}^\infty$ and
$G = \{g_n\}_{n=0}^\infty$ be time-varying maps on $X$ and $Y$, respectively. If there is a homeomorphism $h: X\rightarrow Y$ such that $h\circ f_n=g_n \circ h$ for all $n=0, 1, 2,...$, then $F$ and $G$ are said to be conjugate (with respect to the map $h$) or $h-$conjugate. In particular, if $h: X \rightarrow Y$ is a uniform homeomorphism, then $F$ and $G$ are said to be uniformly conjugate or uniformly $h-$conjugate.
\end{definition}
\begin{thm}\cite{RD}
Let $(X, \rho_1)$ and $(Y, \rho_2)$ be metric spaces. Let $F=\{f_n\}^{\infty}_{n=0}$ and $G=\{g_n\}^{\infty}_{n=0}$ be time-varying maps on $X$ and $Y$, respectively, such that $F$ is uniformly conjugate to $G$. If $F$ is expansive on $X$, then $G$ is expansive on $Y$.
\end{thm}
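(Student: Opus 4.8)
The plan is to mimic the classical argument that expansiveness is a conjugacy invariant (cf.\ Theorem 2.2), carrying it over to the time-varying setting. Let $h:X\rightarrow Y$ be the uniform homeomorphism implementing the conjugacy, so that $h\circ f_n=g_n\circ h$ for every $n\geq 0$. The first step is to promote this relation from the individual maps to the compositions: by induction on $n$ one shows $h\circ F_n=G_n\circ h$ for all $n\geq 0$, where $F_n=f_n\circ\cdots\circ f_0$ and $G_n=g_n\circ\cdots\circ g_0$. Indeed $h\circ F_0=h\circ f_0=g_0\circ h=G_0\circ h$, and assuming $h\circ F_{n-1}=G_{n-1}\circ h$ we obtain $h\circ F_n=h\circ f_n\circ F_{n-1}=g_n\circ h\circ F_{n-1}=g_n\circ G_{n-1}\circ h=G_n\circ h$.

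Next, let $c>0$ be an expansive constant for $F$ on $X$. Since $h$ is a uniform homeomorphism, $h^{-1}:Y\rightarrow X$ is uniformly continuous, so there exists $e>0$ such that $\rho_2(u,v)<e$ implies $\rho_1(h^{-1}(u),h^{-1}(v))<c$; equivalently, by contraposition, $\rho_1(h^{-1}(u),h^{-1}(v))\geq c$ implies $\rho_2(u,v)\geq e$. I claim $e/2$ is an expansive constant for $G$ on $Y$. Given distinct $u,v\in Y$, put $x=h^{-1}(u)$ and $y=h^{-1}(v)$; since $h$ is a bijection, $x\neq y$, so by expansiveness of $F$ there is $n\geq 0$ with $\rho_1(F_n(x),F_n(y))>c$. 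Using the intertwining identity, $F_n(x)=F_n(h^{-1}(u))=h^{-1}(G_n(u))$ and likewise $F_n(y)=h^{-1}(G_n(v))$, hence $\rho_1\bigl(h^{-1}(G_n(u)),h^{-1}(G_n(v))\bigr)>c$, and therefore $\rho_2(G_n(u),G_n(v))\geq e>e/2$. Thus $G$ is expansive on $Y$ with expansive constant $e/2$.

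There is no serious obstacle here; the argument is essentially bookkeeping. The two points that require a little care are (i) verifying the intertwining identity at the level of the compositions $F_n,G_n$ rather than merely for the generators $f_n,g_n$, which is where the recursion $F_n=f_n\circ F_{n-1}$ enters, and (ii) converting the non-strict inequality produced by the contrapositive of uniform continuity into the strict inequality demanded by the definition of an expansive constant, which is the reason for passing to $e/2$. It is also worth remarking, exactly as in Theorem 2.2, that only uniform continuity of $h^{-1}$ (and bijectivity of $h$) is actually used, not uniform continuity of $h$ itself.
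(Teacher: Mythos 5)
Your proof is correct and follows the standard argument for this result (the paper only cites it from Thakkar--Das without reproducing a proof): the induction giving $h\circ F_n=G_n\circ h$, the use of uniform continuity of $h^{-1}$ to produce the constant, and the halving to restore a strict inequality are all exactly what is needed. No issues.
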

\begin{thm}\cite{RD}
Let $(X, \rho)$ be a metric space, $F=\{f_n\}^{\infty}_{n=0}$ be a time-varying map
which is expansive on $X$ and $Y$ be an invariant subset of $X$, then restriction of $F$ to $Y$, defined by $F\lvert Y = \{ f_n\lvert Y \}$ is expansive.
\end{thm}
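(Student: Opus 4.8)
The plan is to show that the very same constant $c$ that witnesses expansiveness of $F$ on $X$ also works for $F\lvert Y$ on $Y$. First I would pin down what invariance of $Y$ gives us: $f_n(Y)\subseteq Y$ for every $n\geq 0$, so that each restriction $f_n\lvert Y$ is a well-defined self-map of $Y$ and the partial composition $(F\lvert Y)_n = f_n\lvert Y\circ f_{n-1}\lvert Y\circ\cdots\circ f_0\lvert Y$ makes sense as a map $Y\to Y$.

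The key observation is that for every $x\in Y$ and every $n\geq 0$ we have $(F\lvert Y)_n(x)=F_n(x)$. I would prove this by induction on $n$. For $n=0$, $(F\lvert Y)_0(x)=f_0\lvert Y(x)=f_0(x)=F_0(x)$, and invariance gives $f_0(x)\in Y$. For the inductive step, assuming $(F\lvert Y)_{n-1}(x)=F_{n-1}(x)$ with this point lying in $Y$, we get $(F\lvert Y)_n(x)=f_n\lvert Y\big((F\lvert Y)_{n-1}(x)\big)=f_n\big(F_{n-1}(x)\big)=F_n(x)$, and this again lies in $Y$ by invariance, closing the induction.

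With that identity in hand the conclusion is immediate. Take any two distinct points $x,y\in Y$. Since $x\neq y$ as points of $X$ and $F$ is expansive on $X$ with constant $c$, there is some $n\geq 0$ with $\rho(F_n(x),F_n(y))>c$. By the previous step $F_n(x)=(F\lvert Y)_n(x)$ and $F_n(y)=(F\lvert Y)_n(y)$, hence $\rho\big((F\lvert Y)_n(x),(F\lvert Y)_n(y)\big)>c$. Therefore $F\lvert Y$ is expansive on $Y$, with the same expansive constant $c$.

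I do not anticipate a genuine obstacle here; the proof is essentially bookkeeping. The only point that requires care is verifying that the iterates of the restricted system coincide with those of the ambient system on $Y$, and this is exactly where invariance of $Y$ is indispensable: without it $f_n\lvert Y$ need not take values in $Y$ and the composition $(F\lvert Y)_n$ would not even be defined, so the statement itself would be vacuous.
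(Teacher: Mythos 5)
Your proof is correct: the induction showing $(F\lvert Y)_n(x)=F_n(x)$ for $x\in Y$ (which is exactly where invariance is used) and the transfer of the same expansive constant $c$ is precisely the standard argument. Note that the paper states this theorem without proof, citing Thakkar and Das, so there is no internal proof to compare against; your argument matches the expected one from that source.
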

\begin{thm}\cite{RD}
Let $(X, \rho_1)$ and $(Y, \rho_2)$ be metric spaces, and $F=\{f_n\}^{\infty}_{n=0}$, $G=\{g_n\}^{\infty}_{n=0}$ be expansive time-varying maps on $X$ and $Y$ respectively. Then under metric $\rho$ on $X \times Y$ defined by
$\rho((x_1, y_1), (x_2, y_2)) = max\{\rho_1(x_1, x_2), \rho_2(y_1, y_2)\}$;
$(x_1, y_1),(x_2, y_2)\in X\times Y$,
time-varying map $F\times G =\{ f_n \times g_n\}^{\infty}_{n=0}$ is expansive on $X \times Y$. Hence, every finite
direct product of expansive, time-varying maps is expansive.
\end{thm}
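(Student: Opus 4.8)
The plan is to exhibit an explicit expansive constant for the product system, manufactured from those of the two factors. Fix an expansive constant $c_1>0$ for $F$ and $c_2>0$ for $G$, and put $c=\min\{c_1,c_2\}$. (Note first that each $f_n\times g_n$ is continuous since $f_n$ and $g_n$ are, so $F\times G$ is a legitimate time-varying map on $X\times Y$.) The one structural fact needed is that iteration commutes with taking products: since $(f_n\times g_n)\circ(f_{n-1}\times g_{n-1})=(f_n\circ f_{n-1})\times(g_n\circ g_{n-1})$, a straightforward induction on $n$ gives $(F\times G)_n(x,y)=(F_n(x),G_n(y))$ for every $n\ge 0$ and every $(x,y)\in X\times Y$.

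Now take distinct points $(x_1,y_1)$ and $(x_2,y_2)$ of $X\times Y$; then $x_1\ne x_2$ or $y_1\ne y_2$. Suppose $x_1\ne x_2$. By expansiveness of $F$ there is an $n\ge 0$ with $\rho_1(F_n(x_1),F_n(x_2))>c_1$, and then, using the splitting above together with the definition of $\rho$,
\[
\rho\bigl((F\times G)_n(x_1,y_1),(F\times G)_n(x_2,y_2)\bigr)
=\max\{\rho_1(F_n(x_1),F_n(x_2)),\,\rho_2(G_n(y_1),G_n(y_2))\}
\ge \rho_1(F_n(x_1),F_n(x_2))>c_1\ge c .
\]
If instead $y_1\ne y_2$, the same argument with $G$ and $c_2$ applies. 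Hence $c$ is an expansive constant for $F\times G$, which proves the first assertion.

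For the last sentence, one argues by induction on the number of factors: the base case is trivial, and the inductive step is precisely the two-factor statement just proved, after identifying the max metric on $X_1\times\cdots\times X_{k+1}$ with the max metric built from that on $X_1\times\cdots\times X_k$ and on $X_{k+1}$. I do not expect any genuine difficulty in this proof; the only place that calls for a little care is verifying the identity $(F\times G)_n=F_n\times G_n$, and once that is in hand everything reduces to a one-line comparison of distances under the max metric.
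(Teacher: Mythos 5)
Your proof is correct and is the standard argument: the identity $(F\times G)_n=F_n\times G_n$, the choice $c=\min\{c_1,c_2\}$, and the comparison under the max metric. The paper itself states this theorem as a quoted result from Thakkar and Das without reproducing a proof, and your argument matches the expected one, so there is nothing to flag.
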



\section{Orbitwise Expansive map}

For this section, we introduce orbitwise expansive map (OE) and study related properties. Throughout the paper, $S_\epsilon(x)$ is an open sphere centered at $x$ on a metric space $(X,\rho)$ 
and if $\rho$ is a usual metric, then $\rho(x,y)=\lvert x-y\rvert, \forall x,y\in X$.  Moreover, $n$ denotes a natural number throughout the paper. 

\begin{definition}
Let $(X,\rho)$ be a metric space and $f:X\rightarrow X$ be a map. A point $x\in X$ is said to be an OE point of $X$ with respect to $f$ if there exists a number $d(f,X)=d_X>0$ (called OE constant for $X$) such that for each $\epsilon>0$, we have
$f^n(S_{\epsilon}(x))\nsubseteq S_{d_X}(f^n(x))$, for some n.\\

If every point of $X$ is an OE point of $X$ with respect to $f$, then $f$ is a sensitive map \cite{JG} (or an OE map) on $X$ and $X$ is said to be an OE set with respect to $f$.
\end{definition}

\begin{definition}
Let $(X,\rho)$ be a metric space and $f:X\rightarrow X$ be a map. Then a point $x\in X$ is said to be an OE point of $A$ if there exists a number $d_A>0$ such that for each $\epsilon>0$, there exists at least one point $y(\neq x)\in A\cap S_\epsilon(x)$ such that $f^n(y)\notin S_{d_A}(f^n(x))$, for some $n$. In other words, $f^n(S_\epsilon(x)\cap A)\nsubseteq S_{d_A}(f^n(x))$, for some $n$.\\

A subset $A$ of $X$ is said to be an OE set if all the points of $A$ are OE points on $A$ with respect to $f$.
\end{definition}

\begin{remark}
If $x$ is an OE point of a subset $A$ of $X$ with respect to a function $f$, then it is an OE point of $X$ with respect to $f$.
\end{remark}
\begin{proof}
Since $x$ is an OE point of $A$ with respect to $f$, there exists a number $d_A>0$ such that for each $\epsilon>0$, there exists at least one point $y(\neq x)\in A\cap S_\epsilon(x)\subseteq X$ such that $f^n(y)\notin S_{d_A}(f^n(x))$, for some $n$. Hence, $x$ is an OE point of $X$ with respect to $f$.

\end{proof}
The converse of the above remark may not be true, we have the following example: 
 \begin{example}
 We consider the usual metric space $(\mathbb{R},\rho)$ and take a subset $A=[0,1]$ of $\mathbb{R}$. Let $f:\mathbb{R}\rightarrow\mathbb{R}$ be a function defined by $f(x)=2x$. Then, $x=2$ is an OE point of $\mathbb{R}$ but not on $A$. 
 \end{example}
\begin{remark}
If $f$ is an OE map on $(X,\rho)$ and $d_X$ is an OE constant, then for all $0<d<d_X$ is also an OE constant.
\end{remark}
\begin{proof}
    Since $f$ is an OE map on $(X,\rho)$. So, for each $x\in X$ there exists a number $d_X>0$ such that for each $\epsilon>0$, we have $f^n(S_{\epsilon}(x))\nsubseteq S_{d_X}(f^n(x)),\text{ for some } n$. Hence, for all $0<d<d_X$, we have,
    $f^n(S_{\epsilon}(x))\nsubseteq S_{d}(f^n(x)),\text{ for some } n$.
\end{proof}

\begin{thm}
Let $(X,\rho)$ be a metric space. Then,
 
$(1)$ if $f$ is a self-expansive map on $X$, then $f$ is an OE map on $X$.

$(2)$ if $f$ is a CW-expansive map on $X$, then $f$ is an OE map on $X$.
\end{thm}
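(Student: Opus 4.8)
The plan is to unwind the definitions in each case and produce the required OE constant directly. For part (1), suppose $f$ is expansive on $X$ with instability constant $\delta>0$. I would claim that $d_X = \delta/2$ (or just $\delta$, depending on whether open spheres give a strict inequality issue) works as an OE constant for every point $x\in X$. Fix $x\in X$ and $\epsilon>0$; pick any point $y\in S_\epsilon(x)$ with $y\neq x$ (such a point exists in any metric space provided $x$ is not isolated — here is the first place to be careful; if $X$ has isolated points the statement needs the convention that the open sphere around an isolated point eventually equals $\{x\}$, in which case $f^n(S_\epsilon(x)) = \{f^n(x)\} \subseteq S_{d_X}(f^n(x))$ and the claim fails, so presumably the paper tacitly assumes no isolated points, or one reads ``for some $y\ne x$'' with an implicit nonemptiness). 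Given $x\ne y$, expansiveness yields an integer $n=n(x,y)$ with $\rho(f^n(x),f^n(y))>\delta > d_X$, hence $f^n(y)\notin S_{d_X}(f^n(x))$, so $f^n(S_\epsilon(x))\nsubseteq S_{d_X}(f^n(x))$. Since $\epsilon$ was arbitrary, $x$ is an OE point; since $x$ was arbitrary, $f$ is an OE map.

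For part (2), suppose $f$ is CW-expansive with expansive constant $c>0$. Here the subtlety is that CW-expansiveness only controls the orbits of \emph{non-degenerate subcontinua}, whereas an arbitrary open sphere $S_\epsilon(x)$ need not contain any non-degenerate subcontinuum (e.g. in a totally disconnected space). So the clean statement really requires the ambient space to be such that small spheres contain non-degenerate continua — a locally connected continuum, say, or more modestly a space in which every point is the limit of a non-degenerate continuum shrinking to it. Assuming we are in such a setting (which I expect is the paper's standing hypothesis, given the references to CW-expansive theory on continua), fix $x$ and $\epsilon>0$, choose a non-degenerate subcontinuum $A\subseteq S_\epsilon(x)$, and apply CW-expansiveness to get $n=n(A)$ with $\mathrm{diam}\, f^n(A) > c$. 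Then $f^n(A)$ cannot be contained in any sphere of radius $c/2$, in particular not in $S_{c/2}(f^n(x))$; since $f^n(A)\subseteq f^n(S_\epsilon(x))$ we conclude $f^n(S_\epsilon(x))\nsubseteq S_{c/2}(f^n(x))$. Thus $d_X = c/2$ is an OE constant and $f$ is an OE map.

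The main obstacle in both parts is not the inequality bookkeeping — which is routine, modulo choosing $d_X$ a definite fraction of the given constant so that a strict inequality survives — but rather the hidden nonemptiness/geometric hypotheses: in (1) that $S_\epsilon(x)$ contains a point other than $x$, and in (2) the far stronger requirement that $S_\epsilon(x)$ contains a non-degenerate subcontinuum. I would flag these explicitly, either by restricting to perfect metric spaces in (1) and to (locally connected) continua in (2), or by noting that the definition of OE point is vacuously or trivially adjusted at isolated points. Once the right standing assumption is in place, the argument is a two-line translation of the respective expansiveness hypothesis into the language of $f^n(S_\epsilon(x))\nsubseteq S_{d_X}(f^n(x))$.
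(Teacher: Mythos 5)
Your proposal follows essentially the same route as the paper: translate the expansiveness hypothesis directly into the statement $f^n(S_\epsilon(x))\nsubseteq S_{d_X}(f^n(x))$, using the expansive constant itself in part (1) and a non-degenerate subcontinuum inside $S_\epsilon(x)$ in part (2). Two points of comparison are worth recording. First, in part (2) your choice of $d_X=c/2$ is actually more careful than the paper's: the paper takes $d_X=c$ and infers from $\mathrm{diam}\,f^n(A)>c$ that some $y$ satisfies $\rho(f^n(x),f^n(y))>c$, which does not follow (the two far-apart points of $f^n(A)$ could each lie at distance barely above $c/2$ from $f^n(x)$); your triangle-inequality argument with $c/2$ closes that gap. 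Second, the hidden hypotheses you flag are real and are passed over silently in the paper: part (1) fails at isolated points, and part (2) asserts without justification that every sphere $S_\epsilon(x)$ contains a non-degenerate subcontinuum, which is false in totally disconnected spaces (where CW-expansiveness is vacuous). So your write-up is the same proof, but with the constant corrected and the standing assumptions made explicit.
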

\begin{proof}$(1)$
Let $f$ be an expansive map on $X$. Then there is an expansive constant $d_X>0$ such that for distinct $x,y\in X$ we have $\rho(f^n(x),f^n(y))>d_X, \text{ for some }n.$\\

Let $\epsilon>0$ be any real number, and $y(\neq x)\in S_\epsilon(x)$. Then, by definition of expansive map, we have $\rho(f^n(x),f^n(y))>d_X$, for some $n$. Therefore, there exists at least one point $y\in S_\epsilon(x)$ other than $x$ such that $\rho(f^n(x),f^n(y))>d_X, \text{ for some }n$. Thus, $ f^n(S_{\epsilon}(x))\nsubseteq S_{d_X} (f^n(x)), \text{ for some }n.$
Hence, $X$ is an OE set with respect to $f$ with OE constant $d_X$.\\

$(2)$ 
Let $f: X\rightarrow X$ be a CW-expansive map. Then, there exists a CW-expansive constant $c>0$ such that if $A$ is a non-degenerate subcontinuum of $X$, so we have $diam f^n(A)>c, \text { for some }n$.\\

Let $x\in X$. Then, for each $\epsilon>0$, there exists some non-degenerate subcontinuum $A$ of $X$ such that $A\subseteq S_\epsilon(x)$. So, $diam f^n(S_\epsilon(x))>c, \text{ for some }n$. It means there exists at least one point $y\in S_\epsilon(x)$ such that
$\rho(f^n(x),f^n(y))>c, \text{ for some }n$. It implies $f^n(S_\epsilon(x))\nsubseteq S_c{f^n(x)}, \text{ for some }n$. Thus, $x$ is an OE point of $X$ with respect to $f$. 

Hence, $f$ is an OE map on $X$ with OE constant $c$.

\end{proof}
\begin{remark}
OE maps generally differ from expansive and CW-expansive maps. We have the following example:
\end{remark}

\begin{example}
We consider the metric space $(\mathbb{R},\rho)$, where  $\rho$ is a usual metric. Let us define a function $f:\mathbb{R}\rightarrow \mathbb{R}$ as follows:
$$
f(x) = \begin{cases} 
\lambda & \text{if } x \in \mathbb{Q}, \\
\lambda x & \text{if } x \in \mathbb{Q}^\complement
\end{cases}, \quad \text{where } \lambda > 1 \text{ is a rational number}.
$$
Then, $f$ is an OE map on $\mathbb{R}$ but neither expansive nor CW-expansive on $\mathbb{R}$.

\end{example}

\begin{thm}
Let $A$ be a dense subset of a metric space $(X,\rho)$. If $f$ is an expansive map on $A$, then $f$ is an OE map on $X$.
\end{thm}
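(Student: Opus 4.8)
The plan is to read ``$f$ is expansive on $A$'' in the sense induced by Definition~2.1: there is a constant $d_A>0$ such that for every pair of distinct points $a,b\in A$ there is an $n$ with $\rho(f^n(a),f^n(b))>d_A$. To show $f$ is OE on $X$ I must produce, for each $x\in X$, a single constant $d_X>0$ witnessing $f^n(S_\epsilon(x))\nsubseteq S_{d_X}(f^n(x))$ for some $n$, for every $\epsilon>0$. I claim $d_X=d_A/2$ works for every $x$ simultaneously.

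First I would fix $x\in X$ and $\epsilon>0$ and use density of $A$ to choose two \emph{distinct} points $a,b\in A\cap S_{\epsilon/2}(x)$. Applying expansiveness of $f$ on $A$ to the pair $a\neq b$ gives an $n$ with $\rho(f^n(a),f^n(b))>d_A$. The triangle inequality $\rho(f^n(a),f^n(b))\le\rho(f^n(a),f^n(x))+\rho(f^n(x),f^n(b))$ forces at least one summand to exceed $d_A/2$; say $\rho(f^n(a),f^n(x))>d_A/2$. Since $a\in S_{\epsilon/2}(x)\subseteq S_\epsilon(x)$ while $f^n(a)\notin S_{d_A/2}(f^n(x))$, we obtain $f^n(S_\epsilon(x))\nsubseteq S_{d_A/2}(f^n(x))$, as required. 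As $x$ and $\epsilon$ were arbitrary, every point of $X$ is an OE point with common OE constant $d_A/2$, so $f$ is an OE map on $X$.

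The one place that needs care — and what I expect to be the real obstacle — is guaranteeing that $A\cap S_{\epsilon/2}(x)$ actually contains two distinct points for every $x$ and every $\epsilon$. If $x\notin A$, density makes $x$ a limit point of $A$, and a limit point of a set in a metric space has infinitely many points of that set in each of its neighbourhoods, so this is automatic. If $x\in A$, one still needs a second point of $A$ in the punctured ball $S_{\epsilon/2}(x)\setminus\{x\}$; this holds precisely when $x$ is not isolated in $X$. In fact an isolated point can never be an OE point (for small $\epsilon$ the ball $S_\epsilon(x)$ equals $\{x\}$, and $f^n(\{x\})=\{f^n(x)\}\subseteq S_{d_X}(f^n(x))$ trivially), so the statement should be read with the standing assumption that $X$ has no isolated points; under that assumption density of $A$ makes every point of $X$ a limit point of $A$, and the argument above goes through uniformly. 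I would therefore either invoke this as part of the paper's conventions or add it explicitly to the hypothesis; apart from that point, the proof is exactly the short ``density plus triangle inequality'' argument sketched above, and one could even take $d_X=d_A$ in the subcase $x\in A$ if a sharper constant is wanted.
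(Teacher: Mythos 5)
Your proof is correct and is essentially the paper's own argument in contrapositive form: the paper shows that at most one point of $S_\epsilon(x)\cap A$ can keep its orbit within $d_A/2$ of the orbit of $x$ (else the triangle inequality would keep two points of $A$ within $d_A$ of each other for all $n$), while you pick two points of $A$ in the ball directly and let the triangle inequality push one of them beyond $d_A/2$; both routes yield the same OE constant $d_X=d_A/2$. Your caveat about isolated points (and about the subcase $x\in A$, which the paper silently omits by treating only $x\in X\setminus A$) is a legitimate observation rather than a flaw in your argument.
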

\begin{proof}
Let $f$ be an expansive map on $A$. Then, there is an expansive constant $d_A$ such that for distinct $x, y\in A$, we have $\rho(f^n(x),f^n(y))>d_A,\text{ 
for some }n$.\\

Let $x\in X\setminus A$. Then, we have to show that there exists $d_X>0$ such that for each $\epsilon>0$, we have $f^n(S_\epsilon(x)\cap A)\nsubseteq S_{d_X}(f^n(x))\text{, for some }n$.\\ 

We claim that there is atmost one point $y\in S_\epsilon(x)\cap A$ such that $d(f^n(x),f^n(y))<\frac{d_A}{2}$  for all $n$, i.e., $f^n(y)\in S_\frac{d_A}{2}(f^n(x)$ for all $n$. If possible, let $y$ and $z$ be two such points in $S_\epsilon(x)\cap A$. Then, for all $n$,
$\rho(f^n(x),f^n(y))<\frac{d_A}{2}$ and $\rho(f^n(x),f^n(z))<\frac{d_A}{2}$.\\

\hspace{0.4cm}Now, $\rho(f^n(y),f^n(z))\leq \rho(f^n(y),f^n(x))+\rho(f^n(x),f^n(z))\leq \frac{d_A}{2}+\frac{d_A}{2}=d_A \hspace{0.5cm}\forall n $
which contradicts the fact that $f$ is expansive on $A$. Thus, $x\in X\setminus A$ is an OE point of $A$. Hence $x$ is an OE point of $X$ with OE constant $d_X=\frac{d_A}{2}$ with respect to $f$. Therefore, $f$ is an OE map on $X$ with OE constant $d_X$.
\end{proof}

\begin{thm}
Let $(X,\rho)$ be a metric space, $x$ be a limit point of $A\subseteq X$, and $f$ be an OE map on $A$. Then, $x$ is an OE point of $A$.
\end{thm}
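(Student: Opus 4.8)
The plan is to prove that $x$ is an OE point of $A$ with OE constant $d_A/2$, where $d_A>0$ is an OE constant witnessing that $f$ is an OE map on $A$. Fixing an arbitrary $\epsilon>0$, the goal is to exhibit a point $y\in A\cap S_\epsilon(x)$ with $y\neq x$ and $f^n(y)\notin S_{d_A/2}(f^n(x))$ for some $n$. I would argue by contradiction: assume that for \emph{every} $y\in A\cap S_\epsilon(x)$ with $y\neq x$ one has $f^n(y)\in S_{d_A/2}(f^n(x))$ for all $n$.

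Since $x$ is a limit point of $A$, first I would fix one point $y\in A\cap S_\epsilon(x)$ with $y\neq x$, and set $\delta=\epsilon-\rho(x,y)>0$, so that $S_\delta(y)\subseteq S_\epsilon(x)$ by the triangle inequality. As $y\in A$ and $f$ is OE on $A$, the point $y$ is itself an OE point of $A$; applying its defining property at radius $\delta$ yields a point $w\in A\cap S_\delta(y)\subseteq A\cap S_\epsilon(x)$ with $w\neq y$ and $\rho(f^{n_0}(w),f^{n_0}(y))\geq d_A$ for some integer $n_0$.

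Now I would split into two cases. If $w=x$, then $\rho(f^{n_0}(x),f^{n_0}(y))\geq d_A>d_A/2$, which contradicts the standing assumption applied to $y$. If $w\neq x$, then $w$ is a point of $A\cap S_\epsilon(x)$ distinct from $x$, so the standing assumption applies to both $w$ and $y$, and the triangle inequality gives $\rho(f^{n_0}(w),f^{n_0}(y))\leq\rho(f^{n_0}(w),f^{n_0}(x))+\rho(f^{n_0}(x),f^{n_0}(y))<d_A/2+d_A/2=d_A$, again contradicting the choice of $w$ and $n_0$. Hence the assumption is untenable, the required point $y$ exists, and since $\epsilon>0$ was arbitrary, $x$ is an OE point of $A$.

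The computations are just triangle-inequality bookkeeping; the only delicate point is that a limit point of $A$ may itself belong to $A$, so the point $w$ produced by the OE property of $y$ might coincide with $x$, which is precisely why the argument is organised as the above case split and why the constant must be halved to $d_A/2$ (taking the constant to be $d_A$ would leave, in the second case, only the useless estimate $\rho(f^{n_0}(w),f^{n_0}(y))<2d_A$). It is worth noting that the argument uses no continuity or invertibility of $f$.
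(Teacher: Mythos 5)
Your proof is correct and follows essentially the same route as the paper's: assume the desired separation with constant $d_A/2$ fails on some sphere $S_\epsilon(x)$, recenter at a nearby point $y\in A\cap S_\epsilon(x)$, and use the triangle inequality to contradict the fact that $y$ is an OE point of $A$ with constant $d_A$. Your explicit case split handling $w=x$ is a small point the paper's set-containment phrasing absorbs silently, but the substance is identical.
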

\begin{proof}
Let $f$ be an OE map on $A$ and $x$ be a limit point of $A$. Let $x\notin X\setminus A$. If possible, let $x$ be not OE point of $A$. Then, for each $d>0$, there exists $\epsilon>0$ such that $f^n(S_{\epsilon}(x)\cap A)\subseteq S_{d}(f^n(x))\text{, for all }n$.
Let, $d$ be fixed number such that $d\leq \frac{d_A}{2}$. Then there exists $\epsilon$ such that
$f^n(S_{\epsilon}(x)\cap A)\subseteq S_{d}(f^n(x))\text{, for all } n$.\\

Let $y\in S_\epsilon(x)\cap A$. We consider $\Delta<\epsilon-\rho(x,y)$. Then $S_\Delta(y)\cap A\subseteq S_\epsilon(x)\cap A$ implies  $f^n(S_\Delta(y)\cap A)\subseteq f^n(S_\epsilon(x)\cap A)\subseteq S_d(f^n(x)),\text{ for all }n$.\\

We claim that $f^n(S_\Delta(y)\cap A)\subseteq S_{d_A}(f^n(y))$. Let $z\in S_\Delta(y)\cap A$. Then $\rho(f^n(z),f^n(x))<d$ and $\rho(f^n(x),f^n(y))<d$, for all $n$. So, $\rho(f^n(y),f^n(z))<2d\leq d_A$, for all $n$. Hence $ f^n(S_\Delta(y)\subseteq S_{d_A}(f^n(y))$, for all $n$. Thus, $y$ is not an OE point of $A$ with respect to $f$, which is a contradiction. Hence, $x$ is an OE point of $A$ and so an OE point of $X$ with respect to $f$ with OE constant $d_X=\frac{d_A}{2}$.
\end{proof}
\begin{cor}
Let $A$ be a dense subset of a metric space $(X,\rho)$. If $f$ is an OE map on $A$, then $f$ is OE on $X$.
\end{cor}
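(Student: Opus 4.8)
The plan is to deduce this immediately from the previous theorem (on limit points of an OE set) together with the density hypothesis and Remark 3.1. Since $A$ is dense in $X$, its closure is all of $X$, so every point of $X$ lies in $A \cup A'$, where $A'$ denotes the set of limit points of $A$. Hence it suffices to check that an arbitrary $x \in X$ is an OE point of $X$ with respect to $f$, and I would split into the cases $x \in A$ and $x \in X \setminus A$.

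If $x \in A$, then $x$ is an OE point of $A$ with respect to $f$ directly by the hypothesis that $f$ is an OE map on $A$; by Remark 3.1 it is then an OE point of $X$. If $x \in X \setminus A$, then density forces $x \in A'$, i.e.\ $x$ is a limit point of $A$; the previous theorem now yields that $x$ is an OE point of $A$, and again Remark 3.1 gives that $x$ is an OE point of $X$. In either case $x$ is an OE point of $X$, and since $x$ was arbitrary, $f$ is an OE map on $X$.

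I expect no genuine obstacle: the mathematical content is entirely supplied by the previous theorem, and the argument is essentially a two-line case split. The one point worth a sentence of care is the treatment of the OE constant: the hypothesis furnishes a constant $d_A$ valid on $A$, while the limit-point theorem produces the constant $\frac{d_A}{2}$ for points of $X \setminus A$; by Remark 3.2 the smaller value $d_X = \frac{d_A}{2}$ serves simultaneously as an OE constant for every point of $X$, so a single global OE constant for $X$ exists, as the definition of an OE map requires.
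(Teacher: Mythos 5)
Your proposal is correct and follows essentially the same route as the paper: density plus the limit-point theorem (Theorem 3.3) plus Remark 3.1. Your explicit case split between $x\in A$ and $x\in X\setminus A$ is in fact slightly more careful than the paper's blanket assertion that every point of $X$ is a limit point of $A$ (which can fail at isolated points of $X$ lying in $A$), and your remark on the OE constant is a reasonable extra precaution.
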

\begin{proof}
Since $A$ is a dense subset of the metric space $(X,\rho)$, every point of $X$ is a limit point of $A$. So, every point of $X$ is an OE point of $A$ with respect to $f$ . Therefore, every point of $X$ is an OE point of $X$ with respect to $f$. Hence, $f$ is an OE map on $X$.
\end{proof}
\begin{cor}
If $A$ is an OE subset of a metric space $(X,\rho)$ with respect to a function $f$, then $\overline{A}$ is also an OE subset of $(X,\rho)$ with respect to the function $f$.
\end{cor}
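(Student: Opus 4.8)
The plan is to verify directly that every point of $\overline{A}$ is an OE point of $\overline{A}$ with respect to $f$, which is precisely the assertion that $\overline{A}$ is an OE subset. The first ingredient is a one-line monotonicity observation generalising Remark 3.1: if $x$ is an OE point of a set $B$ with OE constant $d_B$ and $B\subseteq C\subseteq X$, then $x$ is an OE point of $C$ with the same constant $d_B$. Indeed, for each $\epsilon>0$ the witness $y\neq x$ supplied by Definition 3.2 lies in $B\cap S_\epsilon(x)\subseteq C\cap S_\epsilon(x)$ and still satisfies $f^n(y)\notin S_{d_B}(f^n(x))$ for some $n$, so $f^n(S_\epsilon(x)\cap C)\nsubseteq S_{d_B}(f^n(x))$ for some $n$. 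Applying this with $B=A$ and $C=\overline{A}$, it suffices to show that every $x\in\overline{A}$ is an OE point of $A$.

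Next I would fix $x\in\overline{A}$ and split into two cases. If $x\in A$, then since $A$ is an OE subset with respect to $f$, $x$ is by definition an OE point of $A$. If $x\in\overline{A}\setminus A$, then $x$ is a limit point of $A$, and since $f$ is an OE map on $A$ the previous theorem (every limit point of an OE set is an OE point of that set) gives that $x$ is an OE point of $A$. In either case $x$ is an OE point of $A$, hence by the monotonicity observation above $x$ is an OE point of $\overline{A}$. As $x\in\overline{A}$ was arbitrary, this establishes that $\overline{A}$ is an OE subset of $(X,\rho)$ with respect to $f$.

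I do not expect a genuine obstacle here: the corollary is essentially a repackaging of the limit-point theorem together with the trivial fact that enlarging the ambient set can only make a separating witness easier to find (so that being an OE point of a subset persists when the subset is enlarged). The one point deserving a line of care is that the OE constants produced in the two cases need not coincide; this is harmless, since any positive number below a valid OE constant is again a valid OE constant, so a single constant (for instance $d_A/2$, the value coming out of the limit-point theorem) may be used uniformly over $\overline{A}$ if a common constant is desired.
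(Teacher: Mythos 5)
Your proof is correct and follows essentially the same route as the paper, which simply invokes the preceding limit-point theorem and concludes; you have merely filled in the details the paper leaves implicit (the case split between $x\in A$ and $x\in\overline{A}\setminus A$, and the monotonicity step passing from ``OE point of $A$'' to ``OE point of $\overline{A}$''). Your closing remark about reconciling the OE constants is a sensible extra precaution but does not change the argument.
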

\begin{proof}
Since every limit point of $A$ is an OE point of $A$ with respect to $f$. Therefore, $\overline{A}$ is an OE set with respect to $f$.
\end{proof}
\begin{thm}
Let $(X,\rho)$ be a metric space and $f$ be an OE map on $A$, where $A$ is any subset of $X$. If $x\notin A$ and $x$ is not a limit point of $A$, then it is not an OE point of $A$. 
\end{thm}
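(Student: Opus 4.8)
The plan is a direct argument from the definition of an OE point of $A$, using the standard fact that a point which is neither in $A$ nor a limit point of $A$ has a whole neighbourhood missing $A$. First I would record this fact: since $x$ is not a limit point of $A$, there is some $\epsilon_0>0$ such that $S_{\epsilon_0}(x)$ contains no point of $A$ other than possibly $x$ itself; combining this with the hypothesis $x\notin A$ yields $S_{\epsilon_0}(x)\cap A=\varnothing$.

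Next I would argue by contradiction. Suppose $x$ were an OE point of $A$ with respect to $f$. Then by Definition~3.2 there is a number $d_A>0$ such that for \emph{every} $\epsilon>0$ there exists at least one point $y(\neq x)\in A\cap S_\epsilon(x)$ with $f^n(y)\notin S_{d_A}(f^n(x))$ for some $n$. Apply this with the particular choice $\epsilon=\epsilon_0$: it would force the existence of a point $y\in A\cap S_{\epsilon_0}(x)$, contradicting $S_{\epsilon_0}(x)\cap A=\varnothing$. Hence no such $d_A$ can exist, so $x$ is not an OE point of $A$, which is exactly the claim.

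I do not expect any real obstacle here; the argument is essentially an unravelling of the definitions. The only point needing a moment's care is the very first step — noting that \emph{both} hypotheses ($x\notin A$ \emph{and} $x$ not a limit point of $A$) are used together to obtain a neighbourhood of $x$ entirely disjoint from $A$; dropping either hypothesis would leave open the possibility that $x$ itself, or nearby points, lie in $A$, and then the OE condition could not be ruled out by this simple emptiness argument. Equivalently, one may phrase the proof without contradiction: for the fixed $\epsilon_0$ above, $A\cap S_{\epsilon_0}(x)=\varnothing$, so the defining requirement of an OE point fails at $\epsilon_0$ for any proposed constant $d_A$, hence $x$ is not an OE point of $A$.
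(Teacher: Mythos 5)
Your proof is correct and follows essentially the same route as the paper: both arguments extract an $\epsilon_0>0$ with $S_{\epsilon_0}(x)\cap A=\varnothing$ and observe that the defining condition of an OE point of $A$ must then fail at that radius. Your version is in fact slightly more careful, since you explicitly note that the hypothesis $x\notin A$ is needed alongside ``not a limit point'' to get the sphere entirely disjoint from $A$, a point the paper's proof glosses over.
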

\begin{proof}
Since $x$ is not a limit point of $A$. So, there exists at least one $\epsilon>0$ such that $S_\epsilon(x)\cap A=\emptyset$. Therefore, for this $\epsilon$ there does not exist any point $y\in S_\epsilon(x)\cap A$, such that $f^n(S_\epsilon(x)\cap A)\nsubseteq S_{d_A}(f^n(x))$, for some $n$ holds, where $d_A$ is an OE constant for the set $A$. Hence, $x$ is not an OE point of $A$.
\end{proof}
\begin{thm}
Let $(X,\rho)$ be a metric space. Let $\{A_\lambda:\lambda\in\Delta\}$ be an arbitrary collection of subsets of $X$, and each $A_\lambda$ contains at least two points of $X$. Then,
    
$(i)$ $\cup_{\lambda\in\Delta} OE(A_\lambda)\subseteq OE(\cup_{\lambda\in\Delta} A_\lambda)$

$(ii)$ $OE(\cap_{\lambda\in\Delta} A_\lambda)\subseteq\cap_{\lambda\in\Delta} OE(A_\lambda)$\\
where $OE(A_\lambda)$ is the collection of all OE points of $A_\lambda$ with respect to a function $f$.

\end{thm}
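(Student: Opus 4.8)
The plan is to prove both inclusions directly from the definition of an OE point of a set, using only the monotonicity of the family $\{A_\lambda\}$ with respect to union and intersection, i.e. $A_{\lambda_0}\subseteq\bigcup_{\lambda\in\Delta}A_\lambda$ and $\bigcap_{\lambda\in\Delta}A_\lambda\subseteq A_{\lambda_0}$ for every $\lambda_0\in\Delta$. The only point requiring a little care is the bookkeeping of the OE constants, since each $A_\lambda$ carries its own constant $d_{A_\lambda}$, so I will track these per set rather than trying to produce one uniform constant.

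For $(i)$, I would take $x\in\bigcup_{\lambda\in\Delta}OE(A_\lambda)$, so that $x\in OE(A_{\lambda_0})$ for some $\lambda_0\in\Delta$, and let $d_{A_{\lambda_0}}>0$ be a corresponding OE constant. Fixing an arbitrary $\epsilon>0$, the definition yields a point $y\neq x$ with $y\in A_{\lambda_0}\cap S_\epsilon(x)$ and $f^n(y)\notin S_{d_{A_{\lambda_0}}}(f^n(x))$ for some $n$. Since $A_{\lambda_0}\subseteq\bigcup_{\lambda\in\Delta}A_\lambda$, the same $y$ lies in $\bigl(\bigcup_{\lambda\in\Delta}A_\lambda\bigr)\cap S_\epsilon(x)$, so $f^n\bigl(S_\epsilon(x)\cap\bigcup_{\lambda\in\Delta}A_\lambda\bigr)\nsubseteq S_{d_{A_{\lambda_0}}}(f^n(x))$ for some $n$. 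As $\epsilon$ was arbitrary, $x$ is an OE point of $\bigcup_{\lambda\in\Delta}A_\lambda$ with OE constant $d_{A_{\lambda_0}}$, hence $x\in OE\bigl(\bigcup_{\lambda\in\Delta}A_\lambda\bigr)$.

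For $(ii)$, if $\bigcap_{\lambda\in\Delta}A_\lambda$ contains fewer than two points then, since an OE point of a set requires a witness $y\neq x$ inside the set in every open sphere about $x$, one checks $OE\bigl(\bigcap_{\lambda\in\Delta}A_\lambda\bigr)=\emptyset$ and the inclusion is trivial. Otherwise let $x\in OE\bigl(\bigcap_{\lambda\in\Delta}A_\lambda\bigr)$ with OE constant $d>0$, and fix any $\lambda_0\in\Delta$. For each $\epsilon>0$ the definition gives $y\neq x$ with $y\in\bigl(\bigcap_{\lambda\in\Delta}A_\lambda\bigr)\cap S_\epsilon(x)$ and $f^n(y)\notin S_d(f^n(x))$ for some $n$; since $\bigcap_{\lambda\in\Delta}A_\lambda\subseteq A_{\lambda_0}$, this $y$ lies in $A_{\lambda_0}\cap S_\epsilon(x)$, so $x$ is an OE point of $A_{\lambda_0}$ with the same constant $d$. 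As $\lambda_0\in\Delta$ was arbitrary, $x\in\bigcap_{\lambda\in\Delta}OE(A_\lambda)$.

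The argument is essentially immediate once the definitions are unwound, so there is no substantial obstacle; the only thing to be careful about is not to overclaim. In particular the reverse inclusion in $(i)$ fails in general — a point may be OE for the union while failing to be OE for any single member — so the proof must stop exactly at the one-sided inclusions stated, and the witnessing OE constant in $(i)$ must be allowed to depend on which member $A_{\lambda_0}$ certified membership of $x$.
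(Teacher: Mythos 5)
Your proof is correct and follows essentially the same route as the paper's: both parts reduce to the monotonicity observation that a witness $y$ for the smaller set is automatically a witness for the larger set, applied to $A_{\lambda_0}\subseteq\bigcup_\lambda A_\lambda$ and $\bigcap_\lambda A_\lambda\subseteq A_{\lambda_0}$. Your bookkeeping of the constants (carrying $d_{A_{\lambda_0}}$, resp.\ $d$, over unchanged) is in fact cleaner than the paper's unnecessary passage to $\min\{d_A,d_B\}$, and your remark on the degenerate intersection is a harmless extra.
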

\begin{proof}
$(ii)$
We first prove that $A\subseteq B$ implies $OE(A)\subseteq OE(B)$. Let $x\in OE(A)$. Then, there exists $d_A>0$ such that for each $\epsilon>0$ and for some $n$, we have
$\rho(f^n(x),f^n(y))>d_A , \text{ where } y\in S_\epsilon(x)\cap A$. Therefore, $\rho(f^n(x),f^n(y))>d_A$, where $y\in S_\epsilon(x)\cap B$ (as $A\subseteq B$). 
We consider $d=min\{d_A,d_B\}$ as the OE constant of $B$ and therefore $x\in OE(B)$. Thus, $OE(A)\subseteq OE(B)$.\\

 Clearly, $A_\lambda\subseteq \cup_{\lambda\in\Delta} A_\lambda, \text{for all } \lambda\in\Delta\implies OE(A_\lambda)\subseteq OE(\cup_{\lambda\in\Delta} A_\lambda), \text{for all } \lambda\in\Delta$. Hence, $\cup_{\lambda\in\Delta} OE(A_\lambda)\subseteq OE(\cup_{\lambda\in\Delta} A_\lambda) $. \\

$(ii)$ Let $x\in OE(\cap_{\lambda\in\Delta} A_\lambda)$. Then $x$ is an OE point of $\cap_{\lambda\in\Delta} A_\lambda$. Therefore, there exists $d_{\cap_{\lambda\in\Delta}A_\lambda}>0$ such that for each $\epsilon>0$, we have
$f^n(S_\epsilon(x)\cap (\cap_{\lambda\in\Delta} A_\lambda))\nsubseteq S_{d_{\cap_{\lambda\in\Delta}}A_\lambda}(f^n(x))\text{, for some }n$. Therefore, there exists at least one point $y\in S_\epsilon(x)\cap (\cap_{\lambda\in\Delta} A_\lambda)$ such that,
$\rho(f^n(x),f^n(y))>d_{\cap_{\lambda\in\Delta}A_\lambda}, \text{ for some }n$. So, $y\in\cap_{\lambda\in\Delta} A_\lambda \implies y\in A_\lambda$, for all $\lambda\in\Delta\implies\rho(f^n(x),f^n(y))>d_{\cap_{\lambda\in\Delta}A_\lambda}$, for some  $n$. 
We consider $d=min\{d_{A_\lambda},d_{\cap_{\lambda\in\Delta}A_\lambda}\}$ as the OE constant of $A_\lambda$ and so $x\in OE(A_\lambda)$, for all $\lambda\in\Delta$. Therefore, $x\in\cap_{\lambda\in\Delta} OE(A_\lambda)$. Hence, $OE(\cap_{\lambda\in\Delta} A_\lambda)\subseteq\cap_{\lambda\in\Delta} OE(A_\lambda).$

\end{proof}
\begin{cor}
Let $(X,\rho)$ be a metric space, and let $A$ be the collection of all OE points of $X$ with respect to $f$. Then $A$ is also an OE subset of $X$ with respect to $f$.
\end{cor}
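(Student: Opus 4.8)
The statement asks us to show that the set $A=\{x\in X: x \text{ is an OE point of } X\}$ is an OE subset of $X$, i.e.\ by Definition 3.2 that every $x\in A$ is an OE point of $A$ itself, not merely of $X$. The plan is first to record the easy half: by the remark that an OE point of a subset is an OE point of $X$ we have $OE(A)\subseteq OE(X)=A$, so it suffices to prove the inclusion $A\subseteq OE(A)$, and this will in fact give $A=OE(A)$. Thus the whole content is an ``upgrade'': if $x$ is an OE point of $X$, the separating witnesses furnished by the definition must be choosable inside $A$, i.e.\ one must show $OE(X)\subseteq OE(OE(X))$.

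So I would fix $x\in A$ with OE constant $d_X>0$ and $\epsilon>0$, and start from what Definition 3.1 gives: some $n$ and some $y\in S_\epsilon(x)$, $y\neq x$, with $\rho(f^n(x),f^n(y))>d_X$. I would then argue by contradiction that the witness can be put into $A$. If $x$ were not an OE point of $A$, then the negation of Definition 3.2 applied with the constant $d_X/2$ yields $\epsilon_0>0$ with $f^n(S_{\epsilon_0}(x)\cap A)\subseteq S_{d_X/2}(f^n(x))$ for all $n$. Using the OE property of $x$ for this $\epsilon_0$, pick $z\in S_{\epsilon_0}(x)$ and $n_0$ with $\rho(f^{n_0}(x),f^{n_0}(z))>d_X$; then $z\notin A$, so $z$ is a non-OE point of $X$, which gives $\epsilon_1>0$ --- chosen small enough that $S_{\epsilon_1}(z)\subseteq S_{\epsilon_0}(x)$ and $x\notin S_{\epsilon_1}(z)$ --- with $f^n(S_{\epsilon_1}(z))\subseteq S_{d_X/2}(f^n(z))$ for all $n$. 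The triangle inequality then forces $\rho(f^{n_0}(x),f^{n_0}(w))>d_X/2$ for every $w\in S_{\epsilon_1}(z)$, whereas the first inclusion forces the opposite for every $w\in S_{\epsilon_1}(z)\cap A$; hence $S_{\epsilon_1}(z)\cap A=\emptyset$, a nonempty open ball containing no OE point of $X$.

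I expect this last step to be the genuine obstacle: turning ``a nonempty open ball of non-OE points lying in a region where the orbit of $x$ separates from everything'' into an outright contradiction is not possible from the bare metric-space hypotheses (a map behaving like the identity on a subregion is already a warning sign), so the argument must either invoke continuity of $f$ --- passing from the witnesses $y_\epsilon\to x$ to a genuine OE point found in every neighbourhood of $x$, thereby making $x$ a limit point of $A$ and then applying the theorem that a limit point of an OE set $A$ is an OE point of $A$ --- or use compactness of $X$ with a nested-ball argument. Once the witness can be located in $A$, the remaining bookkeeping is exactly that of the remark comparing OE points of $A$ with those of $X$ and of the union theorem: $d_X/2$ (or $d_X$) serves as an OE constant for $x$ relative to $A$, and since $x\in A$ was arbitrary, $A$ is an OE subset of $X$. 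So the heart of the proof is the continuity/compactness reduction, or a purely combinatorial substitute for it.
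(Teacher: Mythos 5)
Your reduction of the problem is correct and your self-diagnosis is exactly right: the step you flag as the genuine obstacle cannot be closed. For the record, the paper offers no argument at all for this corollary --- it is placed after the union/intersection theorem as though it followed from it, but the monotonicity established there only yields $OE(A)\subseteq OE(X)=A$, i.e.\ the easy inclusion you dispose of at the outset; the substantive inclusion $OE(X)\subseteq OE\bigl(OE(X)\bigr)$ is never addressed anywhere in the paper. Your chain of deductions, down to ``there is a nonempty open ball $S_{\epsilon_1}(z)$ disjoint from $A$ on which the orbit of $x$ separates,'' is sound (modulo harmless $\geq$ versus $>$), and you are right that no contradiction follows from the stated hypotheses.

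In fact neither continuity nor compactness can rescue the argument, because the corollary is false as written. Take $X=\{0\}\cup\{1/k:k\in\mathbb{N}\}$ with the usual metric and the continuous map $f(0)=0$, $f(1)=1$, $f(1/k)=1/(k-1)$ for $k\geq 2$. Every $1/k$ is isolated in $X$, so for small $\epsilon$ one has $S_\epsilon(1/k)=\{1/k\}$ and hence $f^n(S_\epsilon(1/k))\subseteq S_d(f^n(1/k))$ for all $n$ and all $d>0$; thus no point $1/k$ is an OE point of $X$. The point $0$ is an OE point with constant $1/2$, since any $1/k\in S_\epsilon(0)$ with $k\geq 2$ satisfies $f^{k-1}(1/k)=1$ while $f^{k-1}(0)=0$. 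Therefore $A=OE(X)=\{0\}$, and a singleton can never be an OE set, because Definition 3.2 demands a witness $y\neq x$ in $A\cap S_\epsilon(x)$. (Running two copies of this construction around widely separated base points shows that insisting $A$ contain at least two points, as in the union/intersection theorem, does not help either.) So the statement needs an additional hypothesis --- at minimum one forcing every OE point of $X$ to be a limit point of $OE(X)$ --- and your instinct to route the argument through the limit-point theorem is the right repair strategy once such a hypothesis is imposed; without it, no proof exists to be found.
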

\begin{remark}
In general, $OE(A\cap B)\neq OE(A)\cap OE(B)$, where $A$ and $B$ are two subsets of a metric space $(X,\rho)$ that contain at least two points of $X$. We have the following example:
\end{remark}
\begin{example}
We consider $f$ be an expansive map on $(\mathbb{R},d)$. For the subsets $A=\{\frac{1}{n}:n\in\mathbb{N}\},B=\{\frac{-1}{n}:n\in\mathbb{N}\}$, we have $A\cap B=\emptyset,\text{ and so } OE(A\cap B)=\emptyset$.
Clearly, $0\in OE(A)$ and  $0\in OE(B)$. Therefore, $OE(A)\cap OE(B)\neq \emptyset$. Thus, $OE(A\cap B)\neq OE(A)\cap OE(B)$.
\end{example}

\begin{thm}
    Let $(X,\rho)$ be a metric space. Let $\{A_i\}_{i=1}^{m}$ be any finite collection of subsets of $X$, and each $A_i$ contains at least two points of $X$. Then, $OE(\cup_{i=1}^{m}OE(A_i))=\cup_{i=1}^{m}OE(A_i)$, where $OE(A_i)$ is the collection of all OE points of $A_i$ with respect to a function $f$.
\end{thm}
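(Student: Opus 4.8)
The plan is to prove the two inclusions $\cup_{i=1}^{m}OE(A_i)\subseteq OE(\cup_{i=1}^{m}OE(A_i))$ and $OE(\cup_{i=1}^{m}OE(A_i))\subseteq\cup_{i=1}^{m}OE(A_i)$ separately, reducing both of them to the single--set identity $OE(OE(A))=OE(A)$ (which I would first isolate as a lemma, valid for any $A$ containing at least two points) together with the monotonicity statement $A\subseteq B\Rightarrow OE(A)\subseteq OE(B)$ already obtained inside the proof of the union/intersection theorem. Granting the single--set identity, the first inclusion is purely formal: each $OE(A_j)$ is an OE set, so $OE(A_j)\subseteq OE(OE(A_j))$, and since $OE(A_j)\subseteq\cup_{i=1}^{m}OE(A_i)$, monotonicity gives $OE(OE(A_j))\subseteq OE(\cup_{i=1}^{m}OE(A_i))$; taking the union over $j$ yields $\cup_{i=1}^{m}OE(A_i)\subseteq OE(\cup_{i=1}^{m}OE(A_i))$.

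For the reverse inclusion I would use a pigeonhole argument that exploits the finiteness of the index set. Take $x\in OE(\cup_{i=1}^{m}OE(A_i))$ with OE constant $d$. For every $k\in\mathbb{N}$ pick a point $y_k\neq x$ in $(\cup_{i=1}^{m}OE(A_i))\cap S_{1/k}(x)$ with $\rho(f^{n}(x),f^{n}(y_k))>d$ for some $n$; each $y_k$ lies in $OE(A_{i_k})$ for some $i_k\in\{1,\dots,m\}$. Since there are only finitely many indices, some index $j$ occurs for infinitely many $k$, and then, given an arbitrary $\epsilon>0$, choosing such a $k$ with $1/k<\epsilon$ shows that $OE(A_j)\cap S_{\epsilon}(x)$ contains a point different from $x$ whose orbit leaves $S_{d}(f^{n}(x))$ for some $n$. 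Hence $x\in OE(OE(A_j))$ (with the same constant $d$), and the single--set identity gives $x\in OE(A_j)\subseteq\cup_{i=1}^{m}OE(A_i)$.

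The hard part will be the single--set identity $OE(OE(A))=OE(A)$, and at its heart is one recurring question: given a point $x$ with points of $A$ arbitrarily close to it whose orbits separate from the orbit of $x$, can such separating points be found already inside $OE(A)$? The inclusion $OE(A)\subseteq OE(OE(A))$, i.e. that $OE(A)$ is itself an OE set, is exactly this question for $x\in OE(A)$; I would answer it using the corollary that the set of all OE points of a space is itself an OE set (applied here, legitimately since $A$ has at least two points, to the subset $A$), or else reprove it directly along the lines of the limit--point theorem. For $OE(OE(A))\subseteq OE(A)$ I would note first, via the containment $OE(S)\subseteq\overline{S}$ (a consequence of the theorem that a point outside $A$ and not a limit point of $A$ is never an OE point of $A$), that any $x\in OE(OE(A))$ is a limit point of $A$, and then invoke the limit--point theorem to place $x$ in $OE(A)$, finally using the closure corollary to reabsorb $\overline{OE(A)}$ into $OE(A)$ if needed. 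Throughout, the OE constants must be handled with the same care as in the union/intersection theorem, combining them by minima so that no open sphere degenerates. That transfer of separating points from $A$ into $OE(A)$, and the attendant bookkeeping of constants, is where essentially all the difficulty lies; the finite--union step itself contributes nothing beyond the pigeonhole device above.
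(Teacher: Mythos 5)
There is a genuine gap, and it sits exactly where you predicted the difficulty would be: the single-set identity $OE(OE(A))=OE(A)$ is false, so the lemma your whole argument is reduced to cannot be proved. Take $X=\mathbb{R}$ with the usual metric, $f(x)=2x$, and $A=\{1/n:n\in\mathbb{N}\}$ (which has more than two points). Every $1/n$ is isolated in $A$, so it is not an OE point of $A$ (no $y\neq x$ exists in $A\cap S_\epsilon(x)$ for small $\epsilon$), while $0$ is an OE point of $A$ with constant $1$; hence $OE(A)=\{0\}$. But $OE(\{0\})=\emptyset$, since a singleton never supplies a second point in any sphere. So $OE(A)\not\subseteq OE(OE(A))$: the claim that ``each $OE(A_j)$ is an OE set'' is precisely what fails, because $OE(A)$ can be a singleton (or more generally can have points isolated in it) even when $A$ is infinite. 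Corollary 3.3 of the paper does not rescue this --- it concerns $OE(X)$ for the whole space, not $OE(A)$ for a subset, and the subset version is refuted by the same example. Note that this example also shows the theorem \emph{as literally printed} is false for $m=1$.

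The resolution is that the displayed statement is a typo: the paper's own proof, and the infinite-union counterexample immediately following it, both establish $OE(\cup_{i=1}^{m}A_i)=\cup_{i=1}^{m}OE(A_i)$, with no iterated $OE$ anywhere. For that statement your machinery works once the false lemma is deleted: the inclusion $\cup_{i=1}^{m}OE(A_i)\subseteq OE(\cup_{i=1}^{m}A_i)$ is the monotonicity already proved in Theorem 3.7(i), and your pigeonhole argument, run with $y_k\in(\cup_{i=1}^{m}A_i)\cap S_{1/k}(x)$ instead of $y_k\in\cup_{i=1}^{m}OE(A_i)$, lands $x$ directly in $OE(A_j)$ for the recurring index $j$, with the same constant $d$ and no appeal to $OE(OE(\cdot))$. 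That pigeonhole device is the same finiteness idea as the paper's, which instead negates ``$x\in OE(A_i)$'' for every $i$ and takes $\epsilon_0=\min\{\epsilon_1,\dots,\epsilon_m\}$ to contradict $x\in OE(\cup_{i=1}^{m}A_i)$; the two are interchangeable. Everything else in your proposal --- the closure and limit-point detours --- is machinery for a lemma that is both false and unnecessary.
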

\begin{proof}
   Let $x\in OE(\cup_{i=1}^{m}A_i)$. Then, $x$ is an OE point of $\cup_{i=1}^{m}A_i$. So, there exists a number $d_{\cup_{i=1}^{m}A_i}>0$ such that for each $\epsilon>0$, we have
   \begin{equation}\label{a}
f^n(S_\epsilon(x)\cap(\cup_{i=1}^{m}A_i))\nsubseteq S_{d_{\cup_{i=1}^{m}A_i}}(f^n(x)), \text{ for some }n.
   \end{equation}
   
   We claim that $x\in OE(A_i)$, for some $i\in\{1,2,...,m\}$. If possible let $x\notin OE(A_i)$, for all $i\in\{1,2,...,m\}$. So, for each $d>0$ and for each $i\in\{1,2,...,m\}$, there exists at least one $\epsilon_i>0$ such that 
   \begin{equation}\label{b}
    f^n(S_{\epsilon_i}(x)\cap A_i))\subseteq S_d(f^n(x)), \text{ for all }n.
\end{equation}
Let $\epsilon_0=min\{\epsilon_1,\epsilon_2,...,\epsilon_m\}$. Since equation (\ref{b}) holds for each $i\in\{1,2,...,m\}$. So, we have 
$$f^n(S_{\epsilon_0}(x)\cap(\cup_{i=1}^{m} A_i))\subseteq S_d(f^n(x)), \text{ for all }n.$$
which contradicts equation $(\ref{a})$.
Thus, $x\in OE(A_i)$, for some $i\in\{1,2,...,m\}$. Therefore, $x\in \cup_{i=1}^{m}OE(A_i)$. So, $OE(\cup_{i=1}^{m}A_i)\subseteq\cup_{i=1}^{m}OE(A_i)$. Also, by theorem $3.7$ $(i)$, we have $\cup_{i=1}^{m}OE(A_i)\subseteq OE(\cup_{i=1}^{m}A_i)$. Hence,  $OE(\cup_{i=1}^{m}OE(A_i))=\cup_{i=1}^{m}OE(A_i)$.
\end{proof}
The above theorem is not true for arbitrary union we have the following example:
\begin{example}
    We consider the usual metric space $(\mathbb{R},\rho)$ and $f$ be an OE map on $\mathbb{R}$. Suppose, $A_n=\{-\frac{1}{n},\frac{1}{n}:n\in\mathbb{N}\}$. Then, $0\in OE(\cup_{i=1}^{\infty}A_i)$ but $0\notin \cup_{i=1}^{\infty}OE(A_i)$. Thus, $OE(\cup_{i=1}^{\infty}A_i)\neq \cup_{i=1}^{\infty}OE(A_i)$.
\end{example}
\begin{thm}
Let $(X,\rho)$ be a metric space and $f:X\rightarrow X$ be a map.  Let $\{A_\lambda:\lambda\in\Delta\}$ be arbitrary collection of OE subsets of $X$  with respect to $f$. Then $\cup_{\lambda\in\Delta} A_\lambda$ is OE with respect to $f$. 
\end{thm}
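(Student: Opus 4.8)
The plan is to reduce the statement to Theorem 3.7(i). First I would observe that if $A_\lambda$ is an OE subset of $X$ with respect to $f$, then by the definition of an OE set every point of $A_\lambda$ is an OE point of $A_\lambda$, i.e. $A_\lambda\subseteq OE(A_\lambda)$. Taking the union over $\lambda\in\Delta$ gives $\bigcup_{\lambda\in\Delta}A_\lambda\subseteq\bigcup_{\lambda\in\Delta}OE(A_\lambda)$, and Theorem 3.7(i) then yields $\bigcup_{\lambda\in\Delta}A_\lambda\subseteq\bigcup_{\lambda\in\Delta}OE(A_\lambda)\subseteq OE\bigl(\bigcup_{\lambda\in\Delta}A_\lambda\bigr)$. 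Since this says precisely that every point of $\bigcup_{\lambda\in\Delta}A_\lambda$ is an OE point of $\bigcup_{\lambda\in\Delta}A_\lambda$, the set $\bigcup_{\lambda\in\Delta}A_\lambda$ is OE with respect to $f$.

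Alternatively, I would give a direct argument in the style used elsewhere in the paper: fix $x\in\bigcup_{\lambda\in\Delta}A_\lambda$ and choose $\lambda_0\in\Delta$ with $x\in A_{\lambda_0}$. Because $A_{\lambda_0}$ is OE, $x$ is an OE point of $A_{\lambda_0}$, so there is a number $d_{A_{\lambda_0}}>0$ such that for every $\epsilon>0$ there is a point $y\neq x$ in $A_{\lambda_0}\cap S_\epsilon(x)$ with $f^n(y)\notin S_{d_{A_{\lambda_0}}}(f^n(x))$ for some $n$. Since $A_{\lambda_0}\subseteq\bigcup_{\lambda\in\Delta}A_\lambda$, this same $y$ lies in $\bigl(\bigcup_{\lambda\in\Delta}A_\lambda\bigr)\cap S_\epsilon(x)$, hence $f^n\bigl(S_\epsilon(x)\cap\bigcup_{\lambda\in\Delta}A_\lambda\bigr)\nsubseteq S_{d_{A_{\lambda_0}}}(f^n(x))$ for some $n$; thus $x$ is an OE point of $\bigcup_{\lambda\in\Delta}A_\lambda$ with OE constant $d_{A_{\lambda_0}}$. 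As $x$ was arbitrary, the union is an OE set.

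There is essentially no genuine obstacle here: the monotonicity property $A\subseteq B\implies OE(A)\subseteq OE(B)$ established inside the proof of Theorem 3.7 already does all the work, since enlarging the ambient set only adds more candidate points $y$ in each open sphere and never destroys the witnessing inequality $f^n(y)\notin S_{d}(f^n(x))$. The single point requiring a little care — and where I would keep in step with the conventions of Theorems 3.4, 3.5 and 3.7 — is the status of the OE constant: each point $x$ is allowed its own constant, namely the one it inherits from any $A_{\lambda_0}$ containing it, so no supremum or infimum over the family $\{d_{A_\lambda}:\lambda\in\Delta\}$ need be taken, and the argument therefore goes through verbatim for an arbitrary index set $\Delta$.
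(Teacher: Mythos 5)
Your proposal is correct, and your second (direct) argument is essentially identical to the paper's own proof: fix $x$, pick $A_{\lambda_0}$ containing it, and observe that the witnessing point $y$ survives in the larger set, with the OE constant inherited from $A_{\lambda_0}$. The first reduction via $A_\lambda\subseteq OE(A_\lambda)$ and Theorem 3.7(i) is a valid repackaging of the same monotonicity observation, so there is nothing substantively different to compare.
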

\begin{proof}
Let $x\in\cup_{\lambda\in\Delta}A_\lambda$. Then, $x\in A_\lambda$, for some $\lambda$. Since, each $A_\lambda$ is an OE set, there exists $d_{A_\lambda}>0$ such that for each $\epsilon>0$, we have
$f^n(S_\epsilon(x)\cap A_\lambda)\nsubseteq S_{d_{A_{\lambda}}}(f^n(x)),\text{ for some }n$. Therefore, $f^n(S_\epsilon(x)\cap \cup_{\lambda\in\Delta}A_\lambda)\nsubseteq S_{d_{A_\lambda}}(f^n(x))$, for some $n$ (as $A_\lambda\subseteq\cup_{\lambda\in\Delta}A_\lambda)$.\\

Therefore, $x$ is an OE point of $\cup_{\lambda\in\Delta} A_\lambda$ with respect to $f$. Hence,
$\cup_{\lambda\in\Delta} A_\lambda$ is OE set with respect to $f$.


\end{proof}

\begin{cor}
Let $(X,\rho)$ be a metric space and $f$ be a map on $X$. Let $\{A_\lambda:\lambda\in\Delta\}$ be a class of subsets of $X$ such that $X=\cup_{\lambda\in\Delta} A_\lambda$ and each $A_\lambda$, $\lambda\in\Delta$ is OE with respect to $f$. Then $X$ is OE with respect to $f$.
\end{cor}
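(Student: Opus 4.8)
The plan is to obtain this as an immediate consequence of Theorem 3.10. That theorem states precisely that for an arbitrary family $\{A_\lambda:\lambda\in\Delta\}$ of OE subsets of $X$ with respect to $f$, the union $\cup_{\lambda\in\Delta} A_\lambda$ is again an OE subset of $X$ with respect to $f$. In our situation every $A_\lambda$ is assumed OE with respect to $f$, so Theorem 3.10 applies and yields that $\cup_{\lambda\in\Delta} A_\lambda$ is an OE subset of $X$. Since by hypothesis $\cup_{\lambda\in\Delta} A_\lambda = X$, we conclude that $X$ is an OE subset of $X$ with respect to $f$; by Definitions 3.1 and 3.2 this is exactly the assertion that $f$ is an OE map on $X$, i.e. that $X$ is OE with respect to $f$.

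If a self-contained argument is preferred, I would simply transcribe the one-line reasoning of Theorem 3.10's proof: given $x\in X$, the covering hypothesis furnishes an index $\lambda_0$ with $x\in A_{\lambda_0}$; since $A_{\lambda_0}$ is OE there is a constant $d_{A_{\lambda_0}}>0$ with $f^n(S_\epsilon(x)\cap A_{\lambda_0})\nsubseteq S_{d_{A_{\lambda_0}}}(f^n(x))$ for some $n$, for every $\epsilon>0$; and since $A_{\lambda_0}\subseteq X$ the witnessing point lies in $S_\epsilon(x)$ as well, so $f^n(S_\epsilon(x))\nsubseteq S_{d_{A_{\lambda_0}}}(f^n(x))$ for that $n$. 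Thus $x$ is an OE point of $X$ with OE constant $d_X=d_{A_{\lambda_0}}$, and since $x$ was arbitrary, $f$ is an OE map on $X$.

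There is essentially no obstacle here: all of the work has already been done in Theorem 3.10, and the only thing to verify is the bookkeeping identification of ``$X$ is an OE subset of $X$ with respect to $f$'' with ``$f$ is an OE map on $X$'', which is immediate from the definitions. The one point worth a remark is that the OE constant attached to $x$ may depend on which member $A_{\lambda_0}$ of the cover contains $x$, and hence may vary with $x$; but no uniformity over $X$ is demanded by the definition of an OE set, so this causes no difficulty.
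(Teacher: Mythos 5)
Your proposal is correct and is exactly the intended argument: the paper offers no separate proof of this corollary, treating it as an immediate consequence of the preceding theorem on arbitrary unions of OE sets, which is precisely your first paragraph (and your self-contained version faithfully reproduces that theorem's reasoning). Your closing remark that the per-point OE constant may vary with the covering member containing $x$ is a worthwhile observation consistent with the paper's pointwise definition of an OE set.
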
  
\begin{thm}
Let $(X,\rho)$ be a compact metric space, and let $f(X)=X$ be a homeomorphism. Then, for any integer $m \neq 0$, $f^m(X)=X$ is an OE map on $X$ if and only if $f$ is an OE map on $X$.\\
\end{thm}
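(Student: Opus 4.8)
The plan is to establish the two implications separately, after a preliminary reduction. First I would note that for a homeomorphism $f$ one has $(f^m)^k=f^{mk}$, so the orbit $\{(f^m)^k(x):k\in\mathbb{Z}\}$ and the ball centres $(f^m)^k(x)$ occurring in the OE condition for $f^m$ depend only on the set of multiples $m\mathbb{Z}=|m|\mathbb{Z}$. Hence $f^m$ is OE on $X$ if and only if $f^{|m|}$ is, and we may assume $m\geq 1$ throughout.

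For the implication ``$f$ OE $\Rightarrow$ $f^m$ OE'', the crux is to convert a separation of the $f$-orbit, which may occur at a time $n$ not divisible by $m$, into a separation visible along the $f^m$-orbit. Here I would invoke compactness: each of the finitely many iterates $f^0,f^1,\dots,f^{m-1}$ is uniformly continuous on $X$, so given the OE constant $d_X$ of $f$ there is $\delta>0$, depending only on $d_X$ and $m$, such that $\rho(f^r(a),f^r(b))\geq d_X$ implies $\rho(a,b)\geq\delta$ for every $r\in\{0,\dots,m-1\}$. Then, for any $x$ and any $\epsilon>0$, OE-ness of $f$ yields $y\in S_\epsilon(x)$ and an integer $n$ with $\rho(f^n(x),f^n(y))\geq d_X$; writing $n=mk+r$ with $0\leq r\leq m-1$ and using $f^n=f^r\circ f^{mk}$ gives $\rho(f^{mk}(x),f^{mk}(y))\geq\delta$, that is, $(f^m)^k(S_\epsilon(x))\nsubseteq S_\delta((f^m)^k(x))$. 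Since $\delta$ is uniform in $x$, this shows $f^m$ is an OE map on $X$ with OE constant $\delta$.

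The converse ``$f^m$ OE $\Rightarrow$ $f$ OE'' should be immediate and requires no compactness: a separation of the $f^m$-orbit at ``time'' $k$ is literally a separation of the $f$-orbit at the integer time $n=mk$, so for any $x$ and $\epsilon>0$ the point and integer witnessing OE-ness of $f^m$ witness OE-ness of $f$ with the same constant $c$. I expect the only genuine obstacle to be the forward direction, specifically keeping the uniform-continuity/residue argument clean and making sure the extracted constant $\delta$ is independent of the point $x$; the compactness of $X$ is used exactly to control the finitely many intermediate iterates $f^1,\dots,f^{m-1}$. (The homeomorphism hypothesis is only needed to make sense of $f^m$ for negative $m$; for $m\geq 1$, continuity of $f$ on the compact space $X$ already suffices.)
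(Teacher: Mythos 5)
Your proof is correct and is essentially the paper's argument: compactness yields uniform continuity of the finitely many intermediate iterates of $f$, which converts a separation of the $f$-orbit at an arbitrary time $n$ into a separation of the $f^m$-orbit at a nearby multiple of $m$, with a constant $\delta$ depending only on $d_X$ and $m$. The only structural difference is that you write $n=mk+r$ and pull the separation back through $f^r$ with $0\le r\le m-1$, whereas the paper pushes it forward through $f^{\,rm-n}$ with $|rm-n|\le m$; your variant uses only forward iterates, which is exactly why, as you observe, continuity of $f$ already suffices when $m\ge 1$.
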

\begin{proof}
Let $f$ be an OE map on $X$. Then, for each $x\in X$, there exists $d_X>0$ such that for each  $\epsilon>0$, we have
\begin{equation}\label{c}
  f^n(S_\epsilon(x))\nsubseteq S_{d_X}(f^n(x)), \text{ for some  }n   
\end{equation}

Since $X$ is a compact metric space and $f(X)=X$ is a homeomorphism, then given $\xi>0,\exists \eta>0$ such that 
\begin{equation}\label{d}
    \rho(x,y)>\xi\implies \rho(f(x),f(y))>\eta
\end{equation}

Let $\phi(X)=f^m(X)$ and consider the homeomorphisms $f^i(X)$, $i=\pm1,\pm2,...,\pm m$. So by (\ref{d}), $\exists$ 
$ \eta_i>0$ $(i=\pm1,\pm2,...,\pm m)$ such that 
\begin{equation}\label{e}
\rho(x,y)>d\implies \rho(f^i(x),f^i(y))>\eta_i,\forall x,y\in X
\end{equation}

We show that $min\{\eta_i\}$ is an OE constant for $\phi(X)$. From $(\ref{c})$, we see that there exists at least one point $y\in S_\epsilon(x)$ such that,
 $\rho(f^n(x),f^n(y))>d_X$.
Now, one can find $r$ in such way that $0<\lvert rm-n\rvert\leq m$ and if $i$ is taken as $rm-n$, then
$f^i(f^n(s))=f^{rm}(s)=\phi^r(s),\forall s\in X$. So, due to equation $(\ref{e})$, $\rho(\phi^r(x),\phi^r(y))>min\{\eta_i\}=d{^/}_{X}(say)$.
It implies $\phi^r(S_\epsilon(x))\nsubseteq S_{d{^/}_{X}}(\phi^r(x))\text{ for some }r $. Hence, $f^m(X)=X$ is an OE map on $X$ with OE constant $d^{/}_{X}$.\\

Obviously, the converse is true.
\end{proof}

 

\begin{thm}
In a compact metric space, the OE map is independent of the choice of equivalent metrics if the function is continuous. 
\end{thm}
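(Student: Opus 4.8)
The plan is to reduce the whole statement to the elementary fact that on a compact space any two topologically equivalent metrics are automatically uniformly equivalent. So let $\rho_1$ and $\rho_2$ be two equivalent metrics on the compact space $X$, and for $i=1,2$ write $S_\epsilon^{\rho_i}(x)$ for the open $\epsilon$-ball about $x$ in $(X,\rho_i)$. Since $\rho_1$ and $\rho_2$ induce the same topology, $X$ is compact under both, and the identity map $\mathrm{id}:(X,\rho_1)\to(X,\rho_2)$ is a continuous bijection between compact metric spaces; hence it is a homeomorphism, and by the Heine--Cantor theorem both $\mathrm{id}$ and $\mathrm{id}^{-1}$ are uniformly continuous. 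Consequently, for every $\eta>0$ there is a $\theta>0$ with $\rho_1(a,b)<\theta\Rightarrow\rho_2(a,b)<\eta$ for all $a,b\in X$, and symmetrically with the roles of $\rho_1$ and $\rho_2$ interchanged.

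Next I would transfer the OE property. Suppose $f$ is an OE map on $(X,\rho_1)$ with OE constant $d>0$. By uniform equivalence, fix $d'>0$ such that $\rho_2(a,b)<d'\Rightarrow\rho_1(a,b)<d$ for all $a,b\in X$; equivalently, $\rho_1(a,b)\ge d\Rightarrow\rho_2(a,b)\ge d'$. I claim $d'$ is an OE constant for $f$ on $(X,\rho_2)$. Take any $x\in X$ and any $\epsilon>0$. Again by uniform equivalence there is $\delta>0$ with $\rho_1(a,b)<\delta\Rightarrow\rho_2(a,b)<\epsilon$, so that $S_\delta^{\rho_1}(x)\subseteq S_\epsilon^{\rho_2}(x)$. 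Because $f$ is OE with respect to $\rho_1$, there are an $n$ and a point $y\in S_\delta^{\rho_1}(x)$ with $\rho_1(f^n(x),f^n(y))\ge d$; by the choice of $d'$ this forces $\rho_2(f^n(x),f^n(y))\ge d'$, i.e.\ $f^n(y)\notin S_{d'}^{\rho_2}(f^n(x))$, while $y\in S_\epsilon^{\rho_2}(x)$. Hence $f^n(S_\epsilon^{\rho_2}(x))\nsubseteq S_{d'}^{\rho_2}(f^n(x))$ for this $n$, so $x$ is an OE point of $(X,\rho_2)$. Since $x$ and $\epsilon$ were arbitrary and $d'$ depends only on $d$, $f$ is an OE map on $(X,\rho_2)$.

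Finally, interchanging $\rho_1$ and $\rho_2$ yields the reverse implication, so the OE property is indeed invariant within an equivalence class of metrics on a compact space. The only place compactness enters — and the real crux of the argument — is the upgrade from topological to uniform equivalence; everything afterwards is a direct unwinding of the definition of an OE point, with the new constant $d'$ produced uniformly from $d$. I would also point out, as part of the proposal, that continuity of $f$ is in fact not needed for this argument: $f^n$ is the same point map regardless of which metric is used, and we only ever compare the two metric distances between $f^n(x)$ and $f^n(y)$, which is handled entirely by uniform equivalence; it is the compactness of $X$, not any regularity of $f$, that does the work.
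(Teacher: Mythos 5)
Your proof is correct, and it runs on the same engine as the paper's: on a compact space topologically equivalent metrics are uniformly equivalent, so balls of one metric can be nested uniformly inside balls of the other in both directions, and the new OE constant $d'$ is produced uniformly from $d$. Where you genuinely improve on the paper is in the order in which you unwind the definition. Given a target ball $S^{\rho_2}_{\epsilon}(x)$, you first place a $\rho_1$-ball $S^{\rho_1}_{\delta}(x)$ \emph{inside} it and only then invoke the OE property of $(X,\rho_1)$ at radius $\delta$, so the witness $y$ with $\rho_1(f^n(x),f^n(y))\ge d$ automatically lies in $S^{\rho_2}_{\epsilon}(x)$. The paper instead applies the OE hypothesis to the \emph{larger} sphere $S_{\rho_1}(\epsilon,x)\supseteq S_{\rho_2}(\epsilon,x)$ and concludes via the chain $S_{\rho_2}(d_{\rho_2},f^n(x))\subseteq S_{\rho_1}(d_{\rho_1},f^n(x))\nsupseteq f^n(S_{\rho_1}(\epsilon,x))\supseteq f^n(S_{\rho_2}(\epsilon,x))$; as written this does not yield $f^n(S_{\rho_2}(\epsilon,x))\nsubseteq S_{\rho_2}(d_{\rho_2},f^n(x))$, because the point of $f^n(S_{\rho_1}(\epsilon,x))$ that escapes $S_{\rho_1}(d_{\rho_1},f^n(x))$ need not come from the smaller sphere $S_{\rho_2}(\epsilon,x)$. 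Your version repairs exactly this gap. Your closing observation is also sound: continuity of $f$ is never used --- only the uniform comparability of the two metrics, which is where compactness enters --- so that hypothesis is superfluous for the argument.
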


\begin{proof}
Let $\rho_1$ and $\rho_2$ be two equivalent metrics on a compact metric space $X$.  Let $(X,\rho_1)$ be an OE metric space with respect to $f$. We assume $x\in X$. Then, there exists $d_{\rho_1}>0$ such that for each $\epsilon>0$, we have
    $f^n(S_{\rho_1}(\epsilon,x))\nsubseteq S_{\rho_1}(d_{\rho_1},f^n(x)), \text{for some n}$
where $S_{\rho_1}(\epsilon,x)=S_\epsilon(x)$ with respect to the metric $\rho_1$. Since $\rho_1$, $\rho_2$ are equivalent metrics, for  $d_{\rho_1}>0$, $\exists$ $d_{\rho_2}>0$ such that $S_{\rho_2}(d_{\rho_2},t)\subset S_{\rho_1}(d_{\rho_1},t)$, for all $t\in X$.\\

Let us consider the open spheres $S_{\rho_2}(\epsilon,x)$ and $S_{\rho_1}(\epsilon,x)$ in such a way that $S_{\rho_2}(\epsilon,x)\subseteq S_{\rho_1}(\epsilon,x)$. Then, $$S_{\rho_2}(d_{\rho_2},f^n(x))\subseteq S_{\rho_1}(d_{\rho_1},f^n(x))\nsupseteq f^n(S_{\rho_1}(\epsilon,x))\supseteq f^n(S_{\rho_2}(\epsilon,x)),\text{ for some }n.$$ Hence, $(X,\rho_2)$ is an OE metric space with respect to the continuous map $f$ with the OE constant $d_{\rho_2}$.
\end{proof}
\begin{thm}
Let $(X,\rho_1)$ and $(Y,\rho_2)$ be two OE metric spaces with respect to the functions $f$ and $g$, respectively. Then $(X\times Y, \rho)$ is OE with respect to the function $f\times g$ under the metric $\rho$ on $X\times Y$ defined by 
$$\rho((x_1,y_1),(x_2,y_2))=max\{\gamma(\rho_1(x_1,x_2),\gamma(\rho_2(y_1,y_2)\}$$
where $\gamma$ is a non negative real valued bounded function.
\end{thm}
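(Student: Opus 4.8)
Here is how I would attack this. The plan is to reduce the whole problem to the first coordinate: fix an arbitrary point $(x,y)\in X\times Y$, keep the second coordinate frozen at $y$, and exploit that $x$ is an OE point of $X$ (which holds because $f$ is an OE map on $X$, say with OE constant $d_X>0$). Since $(f\times g)^n(u,y)=(f^n(u),g^n(y))$ for every $u\in X$, the second coordinates of the orbits of $(u,y)$ and $(x,y)$ coincide for all $n$, so any separation produced in the $X$-factor is inherited verbatim by the product orbit. I expect the product OE constant to be $D:=\tfrac12\,\gamma(d_X)$; note $D>0$ precisely because $d_X>0$ and $\gamma$ vanishes only at $0$ (one of the hypotheses needed for $\rho$ to be a metric at all).

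First I would record the shape of balls in $(X\times Y,\rho)$: $(u,v)\in S_{\epsilon}(x,y)$ iff $\max\{\gamma(\rho_1(u,x)),\gamma(\rho_2(v,y))\}<\epsilon$, and in particular, using $\gamma(0)=0$, one has $(u,y)\in S_{\epsilon}(x,y)$ iff $\gamma(\rho_1(u,x))<\epsilon$. Given $\epsilon>0$, I would choose $\epsilon_1>0$ with $\gamma(t)<\epsilon$ for all $0\le t<\epsilon_1$ (this step uses continuity of $\gamma$ at $0$), so that $S_{\epsilon_1}(x)\times\{y\}\subseteq S_{\epsilon}(x,y)$. Now I apply OE-ness of $x$ in $X$ with radius $\epsilon_1$: there are a natural number $n$ and a point $u\in S_{\epsilon_1}(x)$ with $\rho_1(f^n(x),f^n(u))>d_X$. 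For this $n$ and $u$, $\rho\big((f\times g)^n(u,y),(f\times g)^n(x,y)\big)=\max\{\gamma(\rho_1(f^n(u),f^n(x))),\gamma(0)\}=\gamma(\rho_1(f^n(u),f^n(x)))\ge\gamma(d_X)=2D>D$, the middle inequality being monotonicity of $\gamma$. Hence $(f\times g)^n(S_{\epsilon}(x,y))\nsubseteq S_{D}((f\times g)^n(x,y))$; since $\epsilon>0$ and $(x,y)$ were arbitrary, $f\times g$ is OE on $X\times Y$ with OE constant $D$ (one could symmetrise and take $D=\tfrac12\min\{\gamma(d_X),\gamma(d_Y)\}$ using the $Y$-factor as well, but it is not needed).

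The hard part will not be the estimate but nailing down $\gamma$. For $\rho$ to be a metric one must assume $\gamma$ is non-decreasing, subadditive, and $\gamma^{-1}(0)=\{0\}$ (an amenable / metric-preserving function); these already justify the inequality $\gamma(\rho_1(\cdot))\ge\gamma(d_X)>0$. The genuinely delicate requirement is the inclusion $S_{\epsilon_1}(x)\times\{y\}\subseteq S_{\epsilon}(x,y)$, which forces $\gamma$ to be continuous at $0$: without it, a small product ball may collapse to the singleton $\{(x,y)\}$, every point of its orbit stays trivially inside $S_D$, and the statement fails. I would therefore add these standing hypotheses on $\gamma$ explicitly before running the argument; after that, everything reduces to routine bookkeeping with the $\max$-metric exactly as above.
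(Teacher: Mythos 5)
Your proof is correct (granting the extra standing hypotheses on $\gamma$ that you rightly identify as necessary for $\rho$ to be a metric at all), but it takes a genuinely different and in fact more careful route than the paper. The paper separates \emph{both} coordinates: it picks $x'\in S_{\rho_1}(\epsilon,x)$ with $\rho_1(f^n(x),f^n(x'))>d_X$ and $y'\in S_{\rho_2}(\epsilon,y)$ with $\rho_2(g^n(y),g^n(y'))>d_Y$, and then asserts $\rho\big((f\times g)^n(x,y),(f\times g)^n(x',y')\big)>\max\{\gamma(d_X),\gamma(d_Y)\}=:d_{X\times Y}$ ``for some $n$''. That argument silently requires the \emph{same} $n$ to witness the separation in both factors (the two applications of OE-ness produce a priori different times, and a max-metric lower bound of $\max\{\gamma(d_X),\gamma(d_Y)\}$ cannot be extracted from a separation in only one factor), and it never checks that $(x',y')$ actually lies in the $\rho$-ball $S_\epsilon((x,y))$ --- the radii of $\rho_1$-balls and $\rho$-balls are related through $\gamma$, which is exactly the point you handle by choosing $\epsilon_1$ via continuity of $\gamma$ at $0$. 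Your single-factor argument (freeze $y$, inherit the separation from the $X$-orbit, take $D=\tfrac12\gamma(d_X)$) sidesteps the synchronization problem entirely and yields a clean proof; the only cost is an OE constant that is not symmetric in $X$ and $Y$, which is immaterial. Your remark that ``$\gamma$ non-negative and bounded'' is an insufficient hypothesis --- one needs $\gamma$ non-decreasing, subadditive, vanishing exactly at $0$, and continuous at $0$ --- applies equally to the paper's own proof, which uses monotonicity of $\gamma$ without ever stating it.
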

\begin{proof}

Since $(X,\rho_1)$ is OE with respect to $f$. So for $x\in X$, there exists $d_X>0$ such that for each $\epsilon>0$, there exists some  $n$ such that, $f^n(S_{\rho_1}(\epsilon,x))\nsubseteq S_{\rho_1}(d_X,f^n(x))$. It means that, there exists at least one point $x^{'}\in S_{\rho_1}(\epsilon, x)$ such that, $\rho_1(f^n(x),f^n(x^{'}))>d_X$, for some $n$.\\

Since $(Y,\rho_2)$ is OE with respect to $g$. So for $y\in Y$, there exists $d_Y>0$ such that for each $\epsilon>0$, there exists some  $n$ such that, $g^n(S_{\rho_2}(\epsilon,y))\nsubseteq S_{\rho_2}(d_Y,g^n(y))$. So, there exists at least one point $y^{'}\in S_{\rho_2}(\epsilon,x)$ such that, $\rho_2(g^n(y),g^n(y^{'}))>d_Y$, for some $n$.\\

     Now,
     $\rho((f\times g)^n(x,y),(f\times g)^n(x^{'},y^{'}))=\rho((f^n(x),g^n(y)),(f^n(x^{'}),g^n(y^{'})))$

     $\hspace{7cm}=max\{\gamma(\rho_1(f^n(x),f^n(x^{'})),\gamma(\rho_2(g^n(y),g^n(y^{'}))\}$

$\hspace{7cm}>max\{\gamma(d_X),\gamma(d_Y)\}=d_{X\times Y},\text{ for some }n$.\\

Therefore, $(f\times g)^n(x^{'},y^{'})\notin S_\rho(d_{X\times Y},(f\times g)^n(x,y)),\text{ for some }n.$
Hence, $(X\times Y,\rho)$ is OE with respect to the function $f\times g$ with OE constant $d_{X\times Y}$.   
\end{proof}
\begin{thm}
    Let $(X,\rho)$ be a metric space. Then, $X$ is OE set with respect to $f$ if every nondegenerate subcontinuum of $X$ is OE set with respect to $f$.
\end{thm}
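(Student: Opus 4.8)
The plan is to check the defining condition point by point. Fix an arbitrary $x\in X$; the goal is to exhibit one constant $d_X>0$ for which $f^n(S_\epsilon(x))\nsubseteq S_{d_X}(f^n(x))$ holds for some $n$, for every $\epsilon>0$.

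First I would choose a nondegenerate subcontinuum $A$ of $X$ that contains $x$. By hypothesis $A$ is an OE set with respect to $f$, so in particular $x$ is an OE point of $A$: there is $d_A>0$ such that for every $\epsilon>0$ there is a point $y\neq x$ in $A\cap S_\epsilon(x)$ with $f^n(y)\notin S_{d_A}(f^n(x))$ for some $n$. There really are such points $y$: a nondegenerate continuum has no isolated point, so $x$ is a limit point of $A$ and $A\cap S_\epsilon(x)\setminus\{x\}\neq\emptyset$ for every $\epsilon>0$, which is exactly what makes the OE condition for $x\in A$ non-vacuous.

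Next I would push this down to $X$. Since $A\cap S_\epsilon(x)\subseteq S_\epsilon(x)$, the witnessing point $y$ lies in $S_\epsilon(x)$, so $f^n(y)\in f^n(S_\epsilon(x))$ while $f^n(y)\notin S_{d_A}(f^n(x))$; hence $f^n(S_\epsilon(x))\nsubseteq S_{d_A}(f^n(x))$. The reason for fixing a single subcontinuum $A$ at the outset is that $d_A$ then does not depend on $\epsilon$, so the inclusion fails uniformly in $\epsilon$, and $x$ is an OE point of $X$ with OE constant $d_X=d_A$. Since $x\in X$ was arbitrary, every point of $X$ is an OE point of $X$, i.e., $X$ is an OE set with respect to $f$.

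The step carrying all the weight is the very first one: selecting a nondegenerate subcontinuum through $x$. This is available exactly in the setting these continuum-wise hypotheses are meant for --- for instance when $X$ is itself a nondegenerate continuum, or is connected and locally connected --- and it is the same tacit assumption already used in the proof that a CW-expansive map is an OE map (Theorem 3.2(2)), where an arbitrary open ball is required to contain a nondegenerate subcontinuum. For a totally disconnected $X$ the hypothesis is vacuous, so in the final write-up I would make this ambient assumption explicit (or phrase the theorem under it) rather than leave it implicit; everything else in the argument is the routine ``enlarge the set, keep the constant'' manipulation above.
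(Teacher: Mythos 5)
Your proof is correct and follows essentially the same route as the paper's: pick a nondegenerate subcontinuum $A$ through $x$, use the hypothesis to get that $x$ is an OE point of $A$, and then enlarge $A\cap S_\epsilon(x)$ to $S_\epsilon(x)$ (the paper invokes its Remark~3.1 for this last step). Your explicit flagging of the tacit assumption that every point lies in some nondegenerate subcontinuum is a genuine improvement: the paper's proof simply asserts ``$x\in A$ for some nondegenerate subcontinuum'' without justification, and as you note this fails for, e.g., totally disconnected spaces, so the theorem as stated really does need that ambient hypothesis.
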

\begin{proof}
Let $x\in X$. Then, $x\in A$ for some nondegenerate subcontinuum of $X$. Since each nondegenerate subcontinuum is OE set with respect to $f$. So, $x$ is an OE point of $A$ with respect to $f$. Thus, $x$ is an OE point of $X$ with respect to $f$. Hence, $X$ is an OE set with respect to $f$.
\end{proof}
The converse of the above theorem may not be true in general we have the following example:
\begin{example}
We consider the usual metric space $(\mathbb{R},\rho)$ and $A=\{n: n\in\mathbb{N}\}$ be any nondegenerate subset of $\mathbb{R}$. Suppose $f$ is a function on $\mathbb{R}$ defined by $f(x)=3x$. Then, $\mathbb{R}$ is an OE set with respect to $f$ but $A$ is not an OE set with respect to $f$.
\end{example}

\section{ Relatively Orbitwise Expansive map}
In this section, we defined relatively orbitwise expansive (ROE) map and proved some other results.
\begin{definition}
Let $(X,\rho)$ be a metric space and $f:X\rightarrow X$ be a map. A point $x\in X$ is said to be a ROE point with respect to $f$ if there exists $\epsilon_x>0$ such that for each $0<\epsilon<\epsilon_x$, we have $f^n(S_{\epsilon}(x))\nsubseteq S_{\epsilon}(f^n(x)),\text{ for some }n$. \\

If every point of $X$ is a ROE  point with respect to a function $f$, then $f$ is called ROE map on $X$. We also say that $X$ is  ROE set with respect to $f$.
\end{definition}
\begin{definition}
Let $(X,\rho)$ be a metric space and $f:X\rightarrow X$ be a map. Then a point $x\in X$ is said to be a ROE point of $A$ if there exists $\epsilon_x>0$ such that for each $0<\epsilon<\epsilon_x$, there exists at least one point $y(\neq x)\in S_\epsilon(x)\cap A$ such that $f^n(y)\notin S_\epsilon(f^n(x))$, for some $n$. In other words, $f^n(S_\epsilon(x)\cap A)\nsubseteq S_\epsilon(f^n(x))$, \text{ for some }n.\\

A subset $A$ of $X$ is said to be ROE set or $f$ is said to be ROE map on $A$ if all the points of $A$ are ROE points on $A$ with respect to $f$.
\end{definition}

\begin{remark}
If $x$ is a ROE point of a subset $A$ of $X$ with respect to $f$, then it is a ROE point of $X$ with respect to $f$.
    
\end{remark}
\begin{proof}
The proof is similar to remark $3.1$.
\end{proof}
\begin{remark}
 If $f$ is expansive map on $(\mathbb{R},\rho)$ and x$\in R$ is a ROE point, then there are uncountable number of points in  $S_\epsilon(x)$ such that $f^n(S_{\epsilon}(x))\nsubseteq S_{\epsilon}(f^n(x))$ .
\end{remark}
\begin{proof}
Since, $f$ is an expansive function, there exists an expansive constant $d_X>0$ such that for distinct  $x,y\in \mathbb{R}$, we have 
$\rho(f^n(x), f^n(y)) > d_X, \text{ for some }n$. As $x$ being a ROE point on $\mathbb{R}$, there exists $\epsilon_x>0$ such that for each $0<\epsilon<\epsilon_x$, we have $f^n(S_\epsilon(x))\nsubseteq S_\epsilon(f^n(x)), \text{ for some  }n$.\\
 
  Now, for each $y_i\neq x \in S_\epsilon(x) $, we have 
$\rho(f^n(x), f^n(y_i))>d_X\text{, for some }n$. Therefore, for each $0<\epsilon<d_X$ and for each $y_i\in S_\epsilon(x)$, we have $f^n(y_i)\notin  S_\epsilon (f^n(x)) $, for some $n$. It implies for all elements of $\{y_i\in S_\epsilon(x):i\in \Delta,0<\epsilon<d_X\}$, we have $f^n(y_i)\notin  S_\epsilon (f^n(x)) $, for some $n$. i.e., for all elements of $\{y_i\in S_\epsilon(x):i\in \Delta, 0<\epsilon<d_X\}$ the relation $f^n(S_{\epsilon}(x))\nsubseteq S_{\epsilon}(f^n(x))$ holds for some $n$.
\end{proof}
\begin{thm}
Let $(X,\rho)$ be a metric space. Then

$(1)$ if $f$ is a self-expansive map on $X$, then $f$ is a ROE map on $X$.

$(2)$ if $f$ is an OE on $X$, then $f$ is a ROE map on $X$. 
\end{thm}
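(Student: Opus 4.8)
The plan is to handle both implications with a single idea: expansiveness and the OE property each supply one \emph{$\epsilon$-independent} separation constant $d_X>0$, and any orbit separation exceeding $d_X$ in particular exceeds every $\epsilon<d_X$. So in each case I will take $\epsilon_x=d_X$ as the radius demanded by the definition of a ROE point, and the same witnessing point and integer produced by expansiveness (resp.\ by the OE condition) will certify that $x$ is a ROE point.

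Concretely, for $(2)$ suppose $f$ is OE on $X$ and let $d_X>0$ be an OE constant at $x$. Set $\epsilon_x=d_X$ and let $0<\epsilon<\epsilon_x$ be arbitrary. The OE condition at $x$, applied with this $\epsilon$, yields an integer $n$ with $f^n(S_\epsilon(x))\nsubseteq S_{d_X}(f^n(x))$; pick $y\in S_\epsilon(x)$ with $f^n(y)\notin S_{d_X}(f^n(x))$, so that $\rho(f^n(x),f^n(y))\ge d_X=\epsilon_x>\epsilon$ (this also forces $y\neq x$, since $f^n(x)\in S_{d_X}(f^n(x))$). Then $f^n(y)\notin S_\epsilon(f^n(x))$, hence $f^n(S_\epsilon(x))\nsubseteq S_\epsilon(f^n(x))$, and since $\epsilon\in(0,\epsilon_x)$ and $x\in X$ were arbitrary, $f$ is a ROE map on $X$. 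Part $(1)$ then follows at once, because a self-expansive map is an OE map (Theorem 3.2$(1)$); alternatively one argues directly, taking $\epsilon_x=d_X$ for the expansive constant $d_X$, choosing any $y\neq x$ in $S_\epsilon(x)$, and using $\rho(f^n(x),f^n(y))>d_X>\epsilon$ for the integer $n$ supplied by expansiveness.

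I do not expect a genuine obstacle here: the entire content is the choice $\epsilon_x=d_X$, together with the observation that the witnessing pair $(y,n)$ is allowed to vary with $\epsilon$ while $d_X$ stays fixed. The only point meriting a sentence of care is the tacit standing hypothesis already used in Theorem 3.2, namely that every punctured ball $S_\epsilon(x)\setminus\{x\}$ is nonempty (i.e.\ $X$ has no isolated points), which is what guarantees that the required $y\neq x$ actually exists in the direct proof of the expansive case; under that convention both parts go through verbatim.
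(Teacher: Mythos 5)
Your proposal is correct and follows essentially the same route as the paper: the paper also sets the ROE radius threshold to the OE constant $d_X$ and observes that $f^n(S_\epsilon(x))\nsubseteq S_{d_X}(f^n(x))$ forces $f^n(S_\epsilon(x))\nsubseteq S_\epsilon(f^n(x))$ whenever $\epsilon<d_X$, with part $(1)$ handled by the analogous direct argument. Your added remark about punctured balls being nonempty is a reasonable observation about a tacit hypothesis the paper also uses, but it does not change the argument.
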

\begin{proof}$(1)$ The proof is similar to theorem $3.1$. Hence, we skip it.\\ 






$(2)$ 
Let $f$ be an OE map on $X$ and let $x\in X$ be any point. Then there exists an OE constant $d_X>0$ such that for each $\epsilon>0$, we have
\begin{equation}\label{f}
f^n(S_\epsilon(x))\nsubseteq S_{d_X}(f^n(x)), \text{ for some }n 
\end{equation}

Let $\epsilon<d_X$. Then, from $(\ref{f})$, we have 
$f^n(S_\epsilon(x))\nsubseteq S_{\epsilon}(f^n(x)), \text{ for some }n $. 

Hence, $f$ is a ROE map on $X$.
\end{proof}
\begin{example}
Consider the usual metric space $(\mathbb{R},\rho)$. Let $f$ be a self-homeomorphism on $\mathbb{R}$ defined by $f^n(x)=2^{\frac{1}{n}}x$, 
 $x\in\mathbb{R}$. Then, $f$ is ROE but not OE on $\mathbb{R}$.
\end{example}

\begin{thm}
Let $(X,\rho)$ be a metric space, $x$ be a limit point of $A\subseteq X$, and $f$ be an expansive map on $A$. Then $x$ is a ROE point of $A$.
\end{thm}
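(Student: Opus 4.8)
The plan is to mimic the proof of Theorem 3.3, but now measuring the separation of trajectories against the shrinking radius $\epsilon$ itself rather than against a fixed constant. First I would unpack the hypothesis: since $f$ is expansive on $A$, there is an expansive constant $d_A>0$ such that any two distinct points $u,v\in A$ satisfy $\rho(f^n(u),f^n(v))>d_A$ for some $n$. I claim that $\epsilon_x:=\frac{d_A}{2}$ witnesses that $x$ is a ROE point of $A$.

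The key step is the following observation, valid for every $\epsilon>0$: the set $S_\epsilon(x)\cap A$ contains at most one point $y$ with $\rho(f^n(x),f^n(y))<\frac{d_A}{2}$ for all $n$. Indeed, if $y$ and $z$ were two such distinct points of $A$, the triangle inequality would give $\rho(f^n(y),f^n(z))\le\rho(f^n(y),f^n(x))+\rho(f^n(x),f^n(z))<d_A$ for every $n$, contradicting expansiveness of $f$ on $A$. Note this uses $x$ only as an auxiliary point, so it does not matter whether or not $x\in A$.

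Next, fix $0<\epsilon<\epsilon_x=\frac{d_A}{2}$. Since $x$ is a limit point of $A$, the punctured ball $S_\epsilon(x)\cap A$ is infinite, so after deleting the point $x$ together with the (at most one) exceptional point supplied by the claim, I may still choose some $y\in S_\epsilon(x)\cap A$ with $y\neq x$ and $\rho(f^n(x),f^n(y))\ge\frac{d_A}{2}$ for some $n$. For that $n$ we then have $\rho(f^n(x),f^n(y))\ge\frac{d_A}{2}>\epsilon$, hence $f^n(y)\notin S_\epsilon(f^n(x))$, i.e. $f^n(S_\epsilon(x)\cap A)\nsubseteq S_\epsilon(f^n(x))$ for some $n$. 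As $\epsilon$ was arbitrary in $(0,\epsilon_x)$, this shows $x$ is a ROE point of $A$ (and, by Remark 4.1, of $X$), completing the argument.

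I do not anticipate a serious obstacle here. The only points needing care are (a) invoking "limit point of $A$" to guarantee that each punctured ball $S_\epsilon(x)\cap A$ still contains a usable point after discarding the lone exceptional trajectory and $x$ itself, and (b) keeping the inequalities strict in the right places, so that the constraint $\epsilon<\frac{d_A}{2}$ genuinely forces $f^n(y)$ outside the radius-$\epsilon$ ball. The same proof works verbatim with $\epsilon_x$ replaced by any value $\le\frac{d_A}{2}$, and specializing $\epsilon$ to a fixed constant recovers the corresponding OE statement.
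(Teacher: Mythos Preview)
Your proof is correct and follows essentially the same approach as the paper: set the threshold at $d_A/2$, use the triangle inequality together with expansiveness on $A$ to show that at most one point of $S_\epsilon(x)\cap A$ can stay $\frac{d_A}{2}$-close to $x$ along the whole orbit, and then invoke the limit-point hypothesis to produce a second point. The only difference is cosmetic: the paper argues by contradiction (assuming $x$ is not ROE and deriving two points of $A$ whose orbits stay within $d_A$ of each other), whereas you argue directly via the ``at most one exceptional point'' lemma borrowed from Theorem~3.3; your formulation is in fact cleaner, since it avoids the slightly imprecise negation step in the paper's version.
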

 \begin{proof}

Let $x\in X\setminus A$. Since, $f$ is an expansive map on $A$, there exists an expansive constant $d_A>0$ such that for any two distinct point $x,y\in A$, we have 
 $\rho(f^n(x),f^n(y))>d_A, \text{for some }n$.\\

If possible, let $x$ not be a ROE point of $A$. Then, for any $0<\epsilon<d_A$ , we have 
\begin{equation}\label{g}
\rho(f^n(x),f^n(y))\leq\epsilon\hspace{0.2cm}  ,\forall y\in S_\epsilon(x)\cap A \text{ and }\forall n
\end{equation}

Since $x$ is a limit point of $A$, every open sphere contains an infinite number of points of $A$. Suppose $y,z\in S_\frac{d_A}{2}(x)\cap A$, taking $\epsilon=\frac{d_A}{2}$. So by $(\ref{g})$, we have 
$\rho(f^n(x),f^n(y))\leq\frac{d_A}{2}\hspace{0.2cm} \forall n$
 and $\hspace{0.2cm}\rho(f^n(x),f^n(z))\leq\frac{d_A}{2}\hspace{0.2cm} \forall n$.\\

Hence, $\rho(f^n(y),f^n(z))\leq \rho(f^n(y),f^n(x))+\rho(f^n(x),f^n(z))\leq d_A\hspace{0.2cm} \forall n$. But it contradicts the fact that $f$ is expansive on $A$. Therefore, $x$ is ROE point of $A$ with respect to $f$.
\end{proof}

\begin{thm}
Let $(X,\rho)$ be a metric space, $x$ be a limit point of $A\subseteq X$, and $f$ be a ROE map on $A$. Then $x$ is a ROE point of $A$.
\end{thm}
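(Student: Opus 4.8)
The plan is to run, in relative form, the contradiction argument that proves Theorem 3.5 (the OE analogue). Assume $x$ is not a ROE point of $A$; since a point of $A$ is automatically a ROE point of $A$, we may take $x\in\overline{A}\setminus A$. Negating Definition 4.2, for every $t>0$ there is a radius $\epsilon\in(0,t)$ that ``fits'', meaning $f^n(S_\epsilon(x)\cap A)\subseteq S_\epsilon(f^n(x))$ for all $n$; so fitting radii accumulate at $0$ and I may fix one as small as I please. Because $x$ is a limit point of $A$, the set $S_\epsilon(x)\cap A$ contains a point $y\neq x$; fix such a $y$, and let $\epsilon_y>0$ be a ROE threshold at the point $y$, available because $f$ is ROE on $A$.

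Next I would localize at $y$. For $0<\Delta<\epsilon-\rho(x,y)$ one has $S_\Delta(y)\cap A\subseteq S_\epsilon(x)\cap A$, hence $f^n(S_\Delta(y)\cap A)\subseteq f^n(S_\epsilon(x)\cap A)\subseteq S_\epsilon(f^n(x))$ for all $n$; for $z\in S_\Delta(y)\cap A$ the triangle inequality gives $\rho(f^n(y),f^n(z))\le\rho(f^n(y),f^n(x))+\rho(f^n(x),f^n(z))<2\epsilon$ for all $n$, so $f^n(S_\Delta(y)\cap A)\subseteq S_{2\epsilon}(f^n(y))$ for all $n$. The goal is to promote this to $f^n(S_\Delta(y)\cap A)\subseteq S_\Delta(f^n(y))$ for every small $\Delta$, which would exhibit arbitrarily small fitting radii for $y$ and so contradict $y$ being a ROE point of $A$; that contradiction makes $x$ a ROE point of $A$, hence of $X$ by Remark 4.1.

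The main obstacle is exactly this last promotion, i.e.\ the bookkeeping of the radii. In Theorem 3.5 there is a single OE constant $d_A$, and choosing the fitting radius $d\le d_A/2$ makes $2d\le d_A$ achieve precisely what is wanted; in the ROE setting the ``expansion constant'' is the radius itself, and the crude bound $2\epsilon$ is not automatically $\le\Delta$ when $\Delta<\epsilon$. Closing this requires a calibration: choosing $y$ near enough to $x$ and the fitting radius $\epsilon$ small enough relative to $\epsilon_y$ that the admissible $\Delta$'s (those with $\Delta<\epsilon-\rho(x,y)$ and $\Delta<\epsilon_y$) fall in the range where the containment genuinely lands inside $S_\Delta(f^n(y))$; it is cleanest to first prove the statement under a uniform ROE threshold for $A$ --- the convention implicitly used for $d_A$ in Section $3$ --- and then specialize. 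This calibration is where essentially all the difficulty sits; everything else is the routine triangle-inequality estimate above.
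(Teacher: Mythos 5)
Your plan reproduces the paper's own strategy for this theorem (contradiction, pass to a nearby point $y\in A\cap S_\epsilon(x)$, triangle inequality), and the obstacle you isolate is exactly where the argument breaks: you can only conclude $f^n(S_\Delta(y)\cap A)\subseteq S_{2\epsilon}(f^n(y))$, while to contradict the ROE property at $y$ you need radius $\Delta$, and every admissible $\Delta$ satisfies $\Delta<\epsilon-\rho(x,y)<\epsilon<2\epsilon$. No calibration closes this: shrinking the fitting radius $\epsilon$ shrinks the admissible $\Delta$'s proportionally, so the ratio $2\epsilon/\Delta>2$ never improves, regardless of how $y$, $\epsilon$, $\Delta$ are chosen, and a uniform lower bound on the thresholds $\epsilon_y$ does not help either. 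For comparison, the paper's proof does not overcome this; it asserts $f^n(S_\epsilon(y)\cap A)\subseteq S_\epsilon(f^n(y))$ at precisely the point where the triangle inequality only yields radius $2\epsilon$, and it also mis-negates Definition 4.2 (claiming every small radius fits, rather than that arbitrarily small fitting radii exist, which is the correct negation you use). The reason the OE analogue (Theorem 3.5) survives is that there the target radius is a fixed constant $d_A$ independent of the ball, so choosing $d\le d_A/2$ absorbs the factor of $2$; in the ROE setting the target radius shrinks with the ball and the factor of $2$ is fatal.

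In fact the step you declined to assert cannot be justified, because the statement is false as written. Take $X=[0,1)$ with the usual metric, $A=(0,1)$, $x=0$, and partition $A$ into the blocks $I_j=[2^{-j-1},2^{-j})$, $j\ge 0$. Define $f(0)=0$ and, on each $I_j$, let $f$ be the doubling map $s\mapsto 2s\bmod 1$ conjugated onto $I_j$ by the affine identification of $I_j$ with $[0,1)$; then each $I_j$ is $f$-invariant. Every $y\in I_j$ is a ROE point of $A$ with threshold $\epsilon_y=2^{-j-4}$: for $0<\epsilon<\epsilon_y$ the set $S_\epsilon(y)\cap I_j$ contains an interval of length at least $\epsilon$, whose iterates eventually attain diameter at least $2^{-j-2}>2\epsilon$ inside $I_j$ (the doubling map is locally eventually onto), so some $z$ satisfies $\rho(f^n(z),f^n(y))>\epsilon$. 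Hence $f$ is a ROE map on $A$. But for $\epsilon=2^{-k}$ one has $S_\epsilon(0)\cap A=\bigcup_{j\ge k}I_j$, which is $f$-invariant, so $f^n(S_\epsilon(0)\cap A)\subseteq(0,2^{-k})\subseteq S_\epsilon(f^n(0))$ for all $n$; since such $\epsilon$ are arbitrarily small, $0$ is a limit point of $A$ that is not a ROE point of $A$. So your instinct was right: the missing promotion is not a bookkeeping issue but a genuine obstruction, and the theorem needs a stronger hypothesis (for instance, the OE hypothesis of Theorem 3.5) rather than a cleverer choice of radii.
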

\begin{proof}
Let $x\in X\setminus A$. If possible, let $x$ not be the ROE point of $A$. Then, for  each $\epsilon_x>0$ and for each $0<\epsilon<\epsilon_x$ we have
 $f^n(S_{\epsilon}(x))\subseteq S_\epsilon(f^n(x))\text{, for all }n$.\\

 Since, $x$ is a limit point of $A$, for each open sphere, $S_{\epsilon}(x)$ contains infinite number of points of $A$, and therefore, for all $y\in (S_{\epsilon}(x)\cap A)$, we have 
 \begin{equation}\label{h}
     \rho(f^n(x),f^n(y))\leq \epsilon\text{, holds for all $\epsilon$ and } n
 \end{equation}
  
Now, we can construct an open sphere $S_{\epsilon}(y)$ in such a way that $(S_{\epsilon}(y)\cap A)\subset (S_{\epsilon}(x)\cap A)$ and contains at least two points and so for all $z\in S_{\epsilon}(y)\cap A$ and by $(\ref{h})$, we have $\rho(f^n(x),f^n(z))\leq \epsilon\text{, holds for all $\epsilon$ and }n$. Therefore, $f^n(S_\epsilon(y)\cap A)\subseteq S_\epsilon(f^n(y)), \forall \epsilon \text{ and } \forall n$. Thus, $y$ is not a ROE point of $A$ with respect to $f$ and so $A$ is not ROE set with respect to $f$, which is a contradiction. Hence, $x$ is a ROE point of $A$ with respect to $f$.
\end{proof}
\begin{cor}
Let $A$ be a dense subset of a metric space $(X,\rho)$. If $f$ is a ROE map on $A$, then $f$ is a ROE on $X$.
\end{cor}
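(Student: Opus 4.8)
The plan is to mirror the proof of Corollary 3.1, replacing the OE limit-point result (Theorem 3.4) by its ROE analogue, Theorem 4.3, proved immediately above. Since $A$ is dense in $(X,\rho)$, we have $\overline{A}=X$, so every point $x\in X$ falls into one of two cases: either $x\in A$, or $x\in X\setminus A$, in which case density forces $x$ to be a limit point of $A$.

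First I would dispose of the case $x\in A$: by hypothesis $f$ is a ROE map on $A$, so by Definition 4.2 every point of $A$ — in particular $x$ — is a ROE point of $A$ with respect to $f$. Next, for $x\in X\setminus A$, density of $A$ makes $x$ a limit point of $A$, so Theorem 4.3 applies verbatim and yields that $x$ is a ROE point of $A$ with respect to $f$. Combining the two cases, every point of $X$ is a ROE point of $A$ with respect to $f$.

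Finally I would invoke Remark 4.2 (whose proof is the ROE counterpart of Remark 3.1), which asserts that a ROE point of a subset $A$ is automatically a ROE point of the ambient space $X$; indeed the witnessing $\epsilon_x>0$ and the point $y\in S_\epsilon(x)\cap A\subseteq S_\epsilon(x)$ serve equally well for $X$. Applying this to each $x\in X$ shows that every point of $X$ is a ROE point of $X$ with respect to $f$, i.e. $f$ is a ROE map on $X$, which is the desired conclusion.

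There is no genuine obstacle here; the only point requiring a touch of care is not to assert bluntly that ``every point of $X$ is a limit point of $A$'' (isolated points of $A$, if any, need not be limit points), but since such points already lie in $A$ they are handled by the first case, so the split above makes the argument airtight. All the substantive work has been done in Theorem 4.3.
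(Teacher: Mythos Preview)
Your proposal is correct and follows essentially the same route as the paper, which simply notes that the proof is analogous to Corollary~3.1: use density plus the ROE limit-point result (Theorem~4.3) to see every $x\in X$ is a ROE point of $A$, then pass to $X$ via the subset-to-ambient remark. One minor slip: the remark you invoke is Remark~4.1, not Remark~4.2; your extra care in splitting off the case $x\in A$ (to avoid the blanket claim that every point of $X$ is a limit point of $A$) is in fact slightly more rigorous than the paper's own wording in Corollary~3.1.
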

\begin{proof}
    The proof is similar to corollary $3.1$.
\end{proof}
\begin{cor} Let $A$  be a ROE subset of a metric space $(X,\rho)$ with respect to a function $f$. Then, $\overline A$ is ROE set with respect to $f$.
\end{cor}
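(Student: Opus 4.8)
The plan is to reproduce, for the ROE notion, exactly the argument used for the OE counterpart (Corollary~3.2, ``$A$ OE $\Rightarrow \overline{A}$ OE''), the engine being the ROE analogue proved just above: if $x$ is a limit point of $A$ and $f$ is a ROE map on $A$, then $x$ is a ROE point of $A$. Before invoking it I would record one trivial monotonicity observation: if $A\subseteq B$ and $x$ is a ROE point of $A$ with respect to $f$, then $x$ is a ROE point of $B$ with respect to $f$. Indeed the same threshold $\epsilon_x>0$ witnesses this, because for every $0<\epsilon<\epsilon_x$ the point $y\,(\neq x)$ that the ROE-ness of $A$ supplies in $S_\epsilon(x)\cap A$ lies a fortiori in $S_\epsilon(x)\cap B$, and the requirement $f^n(y)\notin S_\epsilon(f^n(x))$ for some $n$ is untouched. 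Note that, in contrast with the OE situation (where one had to take a minimum of two expansive-type constants), there is here no fixed constant to reconcile: the ROE condition carries only the radius $\epsilon_x$, which may only be shrunk.

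With this in hand I would split $\overline{A}=A\cup(\overline{A}\setminus A)$. If $x\in A$, then $x$ is a ROE point of $A$ since $A$ is a ROE set by hypothesis. If $x\in\overline{A}\setminus A$, then $x$ is a limit point of $A$ with $x\notin A$, so the limit-point theorem applies verbatim (its statement and proof are phrased precisely for $x\in X\setminus A$) and yields that $x$ is a ROE point of $A$. In either case $x$ is a ROE point of $A$, hence, by the monotonicity observation applied with $B=\overline{A}$, a ROE point of $\overline{A}$. Since $x\in\overline{A}$ was arbitrary, every point of $\overline{A}$ is a ROE point of $\overline{A}$ with respect to $f$, which is exactly the assertion that $\overline{A}$ is a ROE set with respect to $f$.

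I do not anticipate a genuine obstacle; the only two points that deserve a line of care are: (i) the upgrade from ``ROE point of $A$'' to ``ROE point of the larger set $\overline{A}$'', which is handled by the monotonicity remark above; and (ii) the fact that the limit-point theorem covers all of $\overline{A}\setminus A$, which is immediate because that theorem is stated for an arbitrary limit point of $A$ lying outside $A$. If one prefers a more compact write-up, the whole argument collapses to the single sentence: every point of $\overline{A}$ is either a point of $A$ or a limit point of $A$, hence in both cases a ROE point of $A$, hence (by monotonicity) a ROE point of $\overline{A}$.
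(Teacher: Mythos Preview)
Your proposal is correct and follows precisely the route the paper intends: the paper omits the proof entirely (``The proof is quite simple. We therefore ignore it.''), but the implied argument is the ROE analogue of Corollary~3.2, namely to invoke the preceding limit-point theorem (Theorem~4.3) together with the monotonicity of the ROE-point relation under set inclusion. Your write-up is simply a more careful unpacking of that one-line justification, including the explicit observation that, unlike in the OE case, no constants need to be reconciled.
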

\begin{proof}
The proof is quite simple. We therefore ignore it.
\end{proof}
\begin{example} We consider the usual metric space $(\mathbb{R},\rho)$. Suppose $f$ is a ROE map on a subset $(a,b)$ or $[a,b)$ or  $(a,b]$ or $[a,b]$ of $\mathbb{R}$, where $a,b$ are distinct real numbers. Then the ROE points of these subsets with respect to the function $f$ are $[a,b]$. 
\end{example}

\begin{thm}
Let $(X,\rho)$ be a metric space. Let $\{A_\lambda:\lambda\in\Delta\}$ be an arbitrary collection of subsets of $X$, and each $A_\lambda$ contains at least two points of $X$. Then,
$$(i)\cup_{\lambda\in\Delta} ROE(A_\lambda)\subseteq ROE(\cup_{\lambda\in\Delta} A_\lambda)$$
$$(ii)ROE(\cap_{\lambda\in\Delta}A_\lambda)\subseteq\cap_{\lambda\in\Delta} ROE(A_\lambda)$$
where $ROE(A_\lambda)$ is the collection of all ROE points of $A_\lambda$ with respect to a function $f$.

\end{thm}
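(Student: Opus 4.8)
The plan is to derive both inclusions from a single monotonicity fact: if $A\subseteq B$ are subsets of $X$ (each containing at least two points), then $ROE(A)\subseteq ROE(B)$ with respect to $f$. Granting this, part $(i)$ is immediate, since $A_\lambda\subseteq\cup_{\mu\in\Delta}A_\mu$ for every $\lambda\in\Delta$ gives $ROE(A_\lambda)\subseteq ROE(\cup_{\mu\in\Delta}A_\mu)$ for every $\lambda$, and taking the union over $\lambda$ on the left yields $\cup_{\lambda\in\Delta}ROE(A_\lambda)\subseteq ROE(\cup_{\lambda\in\Delta}A_\lambda)$. Part $(ii)$ is equally immediate: $\cap_{\mu\in\Delta}A_\mu\subseteq A_\lambda$ for every $\lambda$ gives $ROE(\cap_{\mu\in\Delta}A_\mu)\subseteq ROE(A_\lambda)$ for every $\lambda$, and intersecting over $\lambda$ gives $ROE(\cap_{\lambda\in\Delta}A_\lambda)\subseteq\cap_{\lambda\in\Delta}ROE(A_\lambda)$. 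So the whole theorem reduces to the monotonicity lemma, exactly as Theorem $3.7$ was reduced to its OE-analogue.

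To prove the lemma I would take $x\in ROE(A)$ and unwind Definition $4.2$: there is $\epsilon_x>0$ such that for each $0<\epsilon<\epsilon_x$ there exists at least one point $y(\neq x)\in S_\epsilon(x)\cap A$ with $f^n(y)\notin S_\epsilon(f^n(x))$ for some $n$. Since $A\subseteq B$, that same witness $y$ lies in $S_\epsilon(x)\cap B$, so the very same threshold $\epsilon_x$ certifies that $f^n(S_\epsilon(x)\cap B)\nsubseteq S_\epsilon(f^n(x))$ for some $n$ whenever $0<\epsilon<\epsilon_x$; hence $x\in ROE(B)$. A point worth flagging, in contrast with the OE case, is that there is no expansive constant to reconcile here: the role formerly played by $d_A$ versus $d_B$ is played by the running radius $\epsilon$ itself, so the ``$d=\min\{d_A,d_B\}$'' step of Theorem $3.7$ is not needed — the threshold simply transfers verbatim, and this actually makes the ROE argument slightly cleaner than its OE counterpart.

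The only thing requiring care is the bookkeeping of quantifiers: $ROE$-membership asks for the \emph{existence} of a threshold below which a witness is found for \emph{all} smaller radii, and when we pass to the larger set $B$ we keep $\epsilon_x$ fixed while only enlarging the set $S_\epsilon(x)\cap(\,\cdot\,)$ in which the witness is sought; enlarging that set can never destroy the non-inclusion. Consequently I do not expect any genuine obstacle — the argument is purely set-theoretic once the definition is unwound. I would close by remarking (as the paper does after Theorem $3.7$ and Theorem $3.8$) that the inclusions in $(i)$ and $(ii)$ are in general proper, and that $(ii)$ cannot be strengthened to an equality, witnessed by the same type of example as in the OE case — two disjoint sequences sharing a common limit point $0$, so that $0$ is a ROE point of each set separately but $\cap_{\lambda}A_\lambda=\emptyset$.
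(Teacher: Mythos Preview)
Your proposal is correct and matches the paper's approach exactly: the paper simply states that the proof is similar to the OE version and skips the details, and your reduction to the monotonicity lemma $A\subseteq B\Rightarrow ROE(A)\subseteq ROE(B)$ is precisely that analogue. Your observation that the $\min\{d_A,d_B\}$ step from the OE proof is unnecessary here --- because the running radius $\epsilon$ itself plays the role of the expansive constant and transfers verbatim --- is a genuine simplification over the OE argument and worth keeping.
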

\begin{proof}
    The proof is similar to theorem $3.4$. So, we skip it.
\end{proof}
\begin{thm}
Let $(X,\rho)$ be a metric space, $f$ be a ROE map on $X$, and let $x$ be a limit point of $X$. Then for each $0<\epsilon<\epsilon_x$
, $f^n(x)\notin S_\epsilon(f^n(x))$ for an infinite number of points in $S_\epsilon(x)$.
\end{thm}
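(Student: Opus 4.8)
Interpreting the conclusion as the assertion that, for each $0<\epsilon<\epsilon_x$, there are infinitely many points $y\in S_\epsilon(x)$ with $f^n(y)\notin S_\epsilon(f^n(x))$ for some $n$, the plan is a proof by contradiction in the spirit of the limit-point arguments of theorem $3.3$ and theorem $4.3$. The one background fact I would record first is that, since $x$ is a limit point of $X$, every open sphere $S_\epsilon(x)$ contains infinitely many points of $X$.

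Fix $0<\epsilon<\epsilon_x$ and suppose, for contradiction, that only finitely many points $y_1,\dots,y_m\in S_\epsilon(x)$ satisfy $f^n(y_i)\notin S_\epsilon(f^n(x))$ for some $n$. Since $f$ is a ROE map on $X$, $x$ is a ROE point, so at least one such point exists and $m\ge 1$; moreover every point $y\in S_\epsilon(x)\setminus\{y_1,\dots,y_m\}$ satisfies $\rho(f^n(x),f^n(y))<\epsilon$ for all $n$. As $S_\epsilon(x)$ is infinite, I may choose $y\in S_\epsilon(x)\setminus\{y_1,\dots,y_m,x\}$, and then $\rho(f^n(x),f^n(y))<\epsilon$ for all $n$. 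The point $y$ is again a ROE point, so there is $\epsilon_y>0$ with $f^n(S_\delta(y))\nsubseteq S_\delta(f^n(y))$ for some $n$ whenever $0<\delta<\epsilon_y$. I would then pick $\delta>0$ so small that $\delta<\epsilon_y$, that $S_\delta(y)\subseteq S_\epsilon(x)$, and that $S_\delta(y)$ contains none of $y_1,\dots,y_m$ and not $x$ — possible because $S_\epsilon(x)$ is open and these are finitely many points all distinct from $y$. Then every $z\in S_\delta(y)$ lies in $S_\epsilon(x)\setminus\{y_1,\dots,y_m\}$, so $\rho(f^n(x),f^n(z))<\epsilon$ for all $n$, whence the triangle inequality gives $\rho(f^n(y),f^n(z))<2\epsilon$ for all $n$ and all $z\in S_\delta(y)$.

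The step I expect to be the main obstacle is turning this $2\epsilon$-shadowing estimate into the inclusion $f^n(S_\delta(y))\subseteq S_\delta(f^n(y))$ needed to contradict the ROE property of $y$: ROE at $y$ provides an escape only at the small scale $\delta$, whereas the estimate controls the orbits of points of $S_\delta(y)$ only up to $2\epsilon$. This is the point that has to be handled with care — one must fix the constants from the start, as in the proofs of theorem $3.3$ and theorem $4.3$, carrying the triangle-inequality factor through and choosing the working sphere around $y$ small enough that the resulting shadowing bound falls below $y$'s escape scale. Once that is arranged, $y$ fails to be a ROE point, contradicting that $f$ is ROE on $X$; hence the set $\{\,y\in S_\epsilon(x): f^n(y)\notin S_\epsilon(f^n(x))\text{ for some }n\,\}$ is infinite, which is the theorem.
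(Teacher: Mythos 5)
Your reading of the statement (the ``$f^n(x)\notin S_\epsilon(f^n(x))$'' in the conclusion should be $f^n(y)\notin S_\epsilon(f^n(x))$ for points $y$ of $S_\epsilon(x)$) matches the paper's intent, and your setup --- assume the set $\{y\in S_\epsilon(x): f^n(y)\notin S_\epsilon(f^n(x))\text{ for some }n\}$ is finite for the fixed $\epsilon$ and seek a contradiction --- is the natural negation. But the step you flag as ``the main obstacle'' is not a delicate constant-chasing issue that can be ``arranged''; it is where the argument breaks, and no choice of constants repairs it. The shadowing estimate you derive for $z\in S_\delta(y)$ is $\rho(f^n(y),f^n(z))<2\epsilon$ for all $n$, and the bound $2\epsilon$ is dictated by the \emph{outer} radius $\epsilon$, which was fixed at the start; shrinking $\delta$ does not improve it. Meanwhile the ROE property of $y$ only promises, for each $0<\delta<\epsilon_y$, some $z\in S_\delta(y)$ and some $n$ with $\rho(f^n(y),f^n(z))\ge\delta$. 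Since necessarily $\delta<\epsilon<2\epsilon$, such an escape is entirely compatible with your $2\epsilon$ bound, so $y$ being a ROE point is never contradicted. This is precisely the feature that separates ROE from OE: in Theorem 3.3 the escape threshold $d_A$ is an absolute constant, so one may work at scale $d\le d_A/2$ and close the loop with the triangle inequality; in the relative setting the threshold scales down with the sphere radius, and the same trick yields nothing. As written, your proof does not go through.

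The paper's own proof takes a genuinely different route: it does not fix $\epsilon$ but runs $\epsilon$ through a decreasing sequence $1,\tfrac12,\tfrac13,\dots$, asserts that the corresponding ``solution sets'' are nested and lose elements as the radius shrinks, and concludes that for small enough radius the sphere would contain only $x$, contradicting that $x$ is a limit point. (You should be aware that this argument has its own difficulties --- the solution sets are not obviously nested, since for smaller $\epsilon$ the escape condition $f^n(y)\notin S_\epsilon(f^n(x))$ is \emph{easier} to satisfy even though the sphere of admissible $y$ is smaller, and ``the sphere contains only $x$'' does not follow from the solution set being small --- but it is, at least, not the fixed-scale triangle-inequality argument you propose, which provably cannot produce the required contradiction.) If you want to salvage a fixed-$\epsilon$ argument, you need an input stronger than ROE at the auxiliary point $y$ (for instance, expansiveness as in Theorem 4.2, where the absolute constant $d_A$ plays the role your $\delta$ cannot).
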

\begin{proof}
    Since $f$ is a ROE map on $X$, for each $x\in X$, there exists $\epsilon_x>0$ such that for each $0<\epsilon<\epsilon_x$, we have 
$f^n(S_\epsilon(x))\nsubseteq S_\epsilon(f^n(x)),\text{ for some }n$. It means, there exists at least one point $y\in S_\epsilon(x)$ such that $f^n(y)\notin S_\epsilon(f^n(x))$, for some $n$. Suppose, we have a collection of these types of points; we call it a solution set of $S_\epsilon(x)$. If possible, let the solution set of $S_\epsilon(x)$ be finite for each $\epsilon$.\\


Now, without loss of generality, we choose $\epsilon=\frac{1}{n}$. So the solution set of $S_1$ contains a finite number of points, say $\{x_1,x_2,...,x_n\}$, and we denote it by $B$. Next, the solution set of $S_\frac{1}{2}(x)$ contains less or equal points of $B$. In this way, if we reduce the radius of the open sphere, the solution set will contain fewer elements than before, and this procedure will stop after some finite steps. If we make the radius very small, it will contain only one point $x$ which contradicts the fact that $x$ is a limit point.
\end{proof}

\begin{thm}
Let $(X,\rho_1)$ and $(Y,\rho_2)$ be two metric spaces. If $g:X\rightarrow Y$ be a homeomorphism such that $g^{-1}$ is uniformly continuous, then $g\phi g^{-1}$ is a ROE of $Y$, where $\phi$ is a ROE map on $X$.
\end{thm}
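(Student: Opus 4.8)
Set $\psi := g\phi g^{-1}$. The plan is to mimic, for the ROE condition, the proof of the expansive analogue (Theorem 2.2 of \cite{BF}). Since $g$ is a homeomorphism, $\psi$ is a well-defined self-map of $Y$ and $\psi^n = g\,\phi^n\,g^{-1}$ for every $n$, so that $\rho_2\big(\psi^n(w),\psi^n(w')\big)=\rho_2\big(g(\phi^n(g^{-1}w)),\,g(\phi^n(g^{-1}w'))\big)$ for all $w,w'\in Y$. Fix $y\in Y$ and put $x:=g^{-1}(y)$. Because $\phi$ is ROE at $x$, there is $\epsilon_x>0$ such that for every $0<\epsilon<\epsilon_x$ one can find $n$ and a point $x'\in S_\epsilon(x)$ with $\rho_1(\phi^n(x),\phi^n(x'))\geq\epsilon$. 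The aim is to manufacture a number $\epsilon_y>0$ witnessing that $y$ is a ROE point of $Y$ with respect to $\psi$.

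First I would set up the two transfer estimates. From uniform continuity of $g^{-1}$, fix a nondecreasing modulus $b$: for each $\alpha>0$, $\rho_2(w,w')<b(\alpha)$ implies $\rho_1(g^{-1}w,g^{-1}w')<\alpha$; taking the contrapositive with $w=g(a)$, $w'=g(a')$ gives $\rho_1(a,a')\geq\alpha\Rightarrow\rho_2(g(a),g(a'))\geq b(\alpha)$. From continuity of $g$ at $x$, for each $\delta>0$ there is $\gamma(\delta)>0$ with $g\big(S_{\gamma(\delta)}(x)\big)\subseteq S_\delta(y)$. Now, given an admissible $\delta$, I would feed the ROE property of $\phi$ at $x$ a radius $\epsilon=\epsilon(\delta)$ chosen in $\big(0,\min\{\epsilon_x,\gamma(\delta)\}\big)$ and, at the same time, large enough that $b(\epsilon)\geq\delta$; this produces $n$ and $x'\in S_\epsilon(x)$ with $\rho_1(\phi^n(x),\phi^n(x'))\geq\epsilon$. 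Putting $y':=g(x')$, we get $y'\in g(S_\epsilon(x))\subseteq g\big(S_{\gamma(\delta)}(x)\big)\subseteq S_\delta(y)$, while $\rho_2\big(\psi^n(y),\psi^n(y')\big)=\rho_2\big(g(\phi^n x),g(\phi^n x')\big)\geq b(\epsilon)\geq\delta$, i.e. $\psi^n(y')\notin S_\delta(\psi^n(y))$, so $\psi^n(S_\delta(y))\nsubseteq S_\delta(\psi^n(y))$. Letting $y$ range over $Y$ then yields that $\psi=g\phi g^{-1}$ is a ROE map on $Y$, with the OE-type constant for $y$ being the threshold $\epsilon_y$ fixed below.

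The one nontrivial point — and the step I expect to be the main obstacle — is the simultaneous choice of the radius $\epsilon=\epsilon(\delta)$: it must be small enough ($\epsilon\leq\gamma(\delta)$) for the perturbed point $y'$ to remain inside the $\delta$-ball about $y$, yet large enough ($b(\epsilon)\geq\delta$) for an $\epsilon$-separation in $X$ to survive transport to a $\delta$-separation in $Y$. Making these coexist forces one to check that the modulus $\gamma$ from continuity of $g$ at $x$ and the modulus $b$ from uniform continuity of $g^{-1}$ can be taken compatibly, i.e. that $b(\gamma(\delta))\geq\delta$ for all $\delta$ below some threshold; one then takes $\epsilon_y$ to be that threshold, shrunk if necessary so that also $\gamma(\delta)<\epsilon_x$ throughout $(0,\epsilon_y)$. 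This is exactly where uniform continuity of $g^{-1}$ (as opposed to mere continuity) does the work, mirroring its role in Theorem 2.2; once this compatibility is secured, the remainder is the routine bookkeeping sketched above.
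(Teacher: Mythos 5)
Your strategy is essentially the paper's own (pull the ball back through $g$, apply the ROE property of $\phi$ at $x=g^{-1}(y)$, push the orbit separation forward via uniform continuity of $g^{-1}$), and you have correctly isolated the crux: the intermediate radius $\epsilon$ must satisfy $\epsilon\leq\gamma(\delta)$ so that $g(S_{\epsilon}(x))\subseteq S_{\delta}(y)$, and simultaneously $b(\epsilon)\geq\delta$ so that an $\epsilon$-separation in $X$ survives as a $\delta$-separation in $Y$. But you then assert that this compatibility, i.e.\ $b(\gamma(\delta))\geq\delta$, is ``exactly where uniform continuity of $g^{-1}$ does the work'' and leave it unproved. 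That is a genuine gap, and it cannot be closed from the stated hypotheses: uniform continuity of $g^{-1}$ only guarantees that $b(\alpha):=\inf\{\rho_2(g(a),g(a')):\rho_1(a,a')\geq\alpha\}$ is positive for each $\alpha>0$; it imposes no quantitative relation between $b$ and the local modulus $\gamma$ of $g$ at $x$. If $g$ stretches small distances near $x$ like $t\mapsto\sqrt{t}$ (perfectly compatible with $g^{-1}$ being uniformly continuous, even Lipschitz, there), then $\gamma(\delta)\asymp\delta^{2}$, and if $g$ is roughly isometric elsewhere then $b(\gamma(\delta))\asymp\delta^{2}\ll\delta$. Your construction then yields a point $y'\in S_{\delta}(y)$ whose orbit separates from that of $y$ by something of order $\delta^{2}$, not by $\delta$ --- enough for an OE-type conclusion at each scale, but not for the ROE property, where the separation threshold is tied to the radius of the ball being tested. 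This is precisely why the expansive analogue (Theorem 2.2 of Bryant) goes through cleanly while the ROE version does not: there the separation threshold is a single constant decoupled from the neighbourhood size.

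For context, the paper's own proof silently trips over the same point: it extracts the separating point $x_1\in S_{\delta}(x)$ from the ROE property of $\phi$ and never verifies that $y_1=g(x_1)$ lies in $S_{\epsilon}(y)$ (uniform continuity of $g^{-1}$ gives $g^{-1}(S_{\epsilon}(y))\subseteq S_{\delta}(x)$, which is the wrong inclusion for this), nor does it order the quantifiers on $\epsilon$ and $\delta$ consistently. So your sketch is at least as candid as the published argument, but as it stands neither closes the central step; a complete proof would need an extra hypothesis tying the two moduli together (e.g.\ $g$ bi-Lipschitz, or an explicit condition of the form $b(\gamma(\delta))\geq\delta$ for small $\delta$) or a genuinely different argument.
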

\begin{proof}
Let $y\in Y$ and $0<\epsilon<\epsilon_y$, where $\epsilon_y$ is fixed. We have to show there exists at least one point $y_1\in S_\epsilon(y)$ such that $(g\phi g^{-1})^n(y_1)\notin S_\epsilon{(g\phi g^{-1})^n(y)}$. Since $g^{-1}$ is uniformly continuous, there exists $\epsilon>0$ such that $\rho_2(g(x_1),g(x_2))\leq\epsilon$ implies $\rho_1(x_1,x_2)<\delta$. Thus, $\rho_1(x_1,x_2)>\delta$ implies $\rho_2(g(x_1),g(x_2))>\epsilon$.\\

Let $y_1\in S_\epsilon(y)$. Then $g^{-1}(y_1)= x_1,g^{-1}(y)= x\in X$. So  $\rho_1(g^{-1}(y_1),g^{-1}(y))>\delta$ implies $\rho_2(g(x_1),g(x_2))>\epsilon$.
Since $\phi$ is ROE on $X$. So for $g^{-1}(y)=x$, there exists $\epsilon_x>0$ such that for each $0<\delta<\epsilon_x$, we have 
$\phi^n(S_\delta(x))\nsubseteq S_\delta(\phi^n(x))\text{, for some }n$. It means, there exists at least one point  $x_1\in S_\delta(x)$  such that, 
$\rho_1(\phi^n(x),\phi^n(x_1))>\delta, \text{ for some }n$. So,
$\rho_1(\phi^n(g^{-1}(y),\phi^n(g^{-1}(y_1))>\delta$
implies $\rho_1((\phi^n g^{-1})(y),(\phi^n g^{-1})(y_1))>\delta$. Hence, $ \rho_2(g(\phi^n g^{-1})(y),g(\phi^n g^{-1})(y_1))>\epsilon $ implies $ \rho_2((g\phi^n g^{-1})(y),(g\phi^n g^{-1})(y_1))>\epsilon$. So, $ (g\phi^n g^{-1})(y_1)\notin S_\epsilon{((g\phi^n g^{-1})(y))},\text{ for some }n$. Thus,\\
$(g\phi g^{-1})^n(y_1)\notin S_\epsilon{((g\phi g^{-1})^n(y))}$, for some $n$.
It implies $y$ is a ROE point on $Y$ with respect  $g\phi g^{-1}$. Therefore, $g\phi g^{-1}$ is a ROE on $Y$.
\end{proof}
\begin{thm}

Let $(X,\rho)$ be a metric space and $f:X\rightarrow X$ be a map.  Let $\{A_\lambda:\lambda\in\Delta\}$ be arbitrary collection of ROE subsets of $X$  with respect to $f$. Then $\cup_{\lambda\in\Delta} A_\lambda$ is ROE with respect to $f$. 
\end{thm}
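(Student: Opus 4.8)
The plan is to run the same monotonicity argument that underlies the OE counterpart (``an arbitrary union of OE subsets is OE''): the statement is a pure consequence of the inclusion $A_\mu\subseteq\cup_{\lambda\in\Delta}A_\lambda$, so it needs neither compactness, nor continuity of $f$, nor finiteness of $\Delta$. A convenient feature of the ROE notion is that the separation threshold is the running radius $\epsilon$ itself rather than a fixed constant, so — unlike in the intersection-type results proved earlier, where one had to reconcile several constants by taking a minimum — the witness produced inside the small set is literally the witness for the big set.

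I would carry this out in four short steps. First, fix an arbitrary $x\in\cup_{\lambda\in\Delta}A_\lambda$ and pick an index $\mu\in\Delta$ with $x\in A_\mu$. Second, apply the hypothesis that $A_\mu$ is a ROE set with respect to $f$ at the point $x$: there is $\epsilon_x>0$ such that for every $0<\epsilon<\epsilon_x$ there is some $n$ with $f^n(S_\epsilon(x)\cap A_\mu)\nsubseteq S_\epsilon(f^n(x))$, i.e.\ a point $y(\neq x)\in S_\epsilon(x)\cap A_\mu$ with $f^n(y)\notin S_\epsilon(f^n(x))$. Third, observe $S_\epsilon(x)\cap A_\mu\subseteq S_\epsilon(x)\cap(\cup_{\lambda\in\Delta}A_\lambda)$, so this same $y$ lies in $S_\epsilon(x)\cap(\cup_{\lambda\in\Delta}A_\lambda)$ and, for the same $n$, witnesses $f^n(S_\epsilon(x)\cap(\cup_{\lambda\in\Delta}A_\lambda))\nsubseteq S_\epsilon(f^n(x))$. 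Fourth, since $\epsilon_x$ was obtained independently of $\epsilon$, conclude that $x$ is a ROE point of $\cup_{\lambda\in\Delta}A_\lambda$ with respect to $f$; as $x$ was an arbitrary point of the union, $\cup_{\lambda\in\Delta}A_\lambda$ is a ROE set with respect to $f$.

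There is essentially no obstacle here: the only thing that needs a moment's care is the bookkeeping that $\epsilon$ simultaneously plays the role of neighbourhood radius and of separation threshold, which is exactly why the argument passes through verbatim for arbitrary (not merely finite) index sets. As a one-line alternative I would note that, since each $A_\lambda$ being ROE means $A_\lambda\subseteq ROE(A_\lambda)$, the earlier inclusion $\cup_{\lambda\in\Delta}ROE(A_\lambda)\subseteq ROE(\cup_{\lambda\in\Delta}A_\lambda)$ gives $\cup_{\lambda\in\Delta}A_\lambda\subseteq ROE(\cup_{\lambda\in\Delta}A_\lambda)$, i.e.\ $\cup_{\lambda\in\Delta}A_\lambda$ is ROE. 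Finally I would record the immediate corollary, paralleling the OE case: if $X=\cup_{\lambda\in\Delta}A_\lambda$ with each $A_\lambda$ a ROE subset of $X$ with respect to $f$, then $X$ is ROE with respect to $f$.
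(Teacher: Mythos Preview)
Your proof is correct and follows essentially the same monotonicity argument as the paper: pick $x$ in the union, locate it in some $A_\mu$, use the ROE property of $A_\mu$ to get $\epsilon_x$ and the witness $y$, and then pass to the union via $S_\epsilon(x)\cap A_\mu\subseteq S_\epsilon(x)\cap(\cup_{\lambda}A_\lambda)$. Your additional one-line alternative via $\cup_\lambda ROE(A_\lambda)\subseteq ROE(\cup_\lambda A_\lambda)$ and the recorded corollary are pleasant extras not in the paper's proof, but the core argument is the same.
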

\begin{proof}
Let $x\in\cup_{\lambda\in\Delta}A_\lambda$. Then, $x\in A_\lambda$, for some $\lambda$. As each $A_\lambda$ is ROE, there exists $\epsilon_x>0$ such that for each $0<\epsilon<\epsilon_x$, we have 
$f^n(S_\epsilon(x)\cap A_\lambda)\nsubseteq S_\epsilon(f^n(x)),\text{ for some }n$. It implies
$ f^n(S_\epsilon(x)\cap \cup_{\lambda\in\Delta}A_\lambda)\nsubseteq S_\epsilon(f^n(x))$, for some $n$.\\


Therefore, $x$ is a ROE point of $\cup_{\lambda\in\Delta} A_\lambda$ with respect to $f$. Hence,
$\cup_{\lambda\in\Delta} A_\lambda$  is ROE set with respect to $f$.
\end{proof}
\begin{cor}
Let $(X,\rho)$ be a metric space and $f:X\rightarrow X$ be a continuous map. Let $\{A_\lambda:\lambda\in\Delta\}$ be a class of subsets of $X$ such that $X=\cup_{\lambda\in\Delta} A_\lambda$ and each $A_\lambda$, $\lambda\in\Delta$ is ROE with respect to $f$. Then, $X$ is ROE with respect to $f$.
\end{cor}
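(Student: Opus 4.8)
The plan is to read this off directly from Theorem 4.8. Since $\{A_\lambda:\lambda\in\Delta\}$ is a collection of ROE subsets of $X$ with respect to $f$, Theorem 4.8 gives that $\cup_{\lambda\in\Delta}A_\lambda$ is a ROE set with respect to $f$. By hypothesis $X=\cup_{\lambda\in\Delta}A_\lambda$, so $X$ itself is ROE with respect to $f$, which is exactly the claim. The continuity hypothesis on $f$ plays no role in the argument and is carried along only for uniformity with the statements in the preceding section.

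If one prefers a self-contained argument that does not cite Theorem 4.8, I would unwind the definitions as follows. Fix an arbitrary $x\in X$. Because $X=\cup_{\lambda\in\Delta}A_\lambda$, there is an index $\lambda_0\in\Delta$ with $x\in A_{\lambda_0}$. Since $A_{\lambda_0}$ is ROE with respect to $f$, the point $x$ is a ROE point of $A_{\lambda_0}$, so there exists $\epsilon_x>0$ such that for each $0<\epsilon<\epsilon_x$ one can find a point $y\ne x$ with $y\in S_\epsilon(x)\cap A_{\lambda_0}$ and $f^n(y)\notin S_\epsilon(f^n(x))$ for some $n$. As $A_{\lambda_0}\subseteq X$, this $y$ lies in $S_\epsilon(x)$, whence $f^n(S_\epsilon(x))\nsubseteq S_\epsilon(f^n(x))$ for some $n$. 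Thus $x$ is a ROE point of $X$ with respect to $f$, and since $x$ was arbitrary, $X$ is ROE with respect to $f$.

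There is essentially no obstacle here: the corollary is an immediate specialization of Theorem 4.8 to the case where the given family exhausts $X$, mirroring Corollary 3.3 for OE sets. The only point worth noting is that the very $\epsilon_x$ witnessing $x$ as a ROE point of $A_{\lambda_0}$ also witnesses it as a ROE point of $X$, so no shrinking of radii or passing to a common constant is required, in contrast with the intersection-type statements such as Theorem 4.5.
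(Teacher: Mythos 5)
Your proposal is correct and takes essentially the same route as the paper, which states this corollary as an immediate consequence of the preceding theorem on arbitrary unions of ROE sets (offering no separate proof); your observation that the continuity hypothesis is never used is also accurate.
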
  

\section{OE and ROE of a time-varying map}
In this section, we discuss orbitwise expansive (OE) map and relative orbitwise expansive (ROE) map in a time-varying dynamical system.\\

We consider $(X, \rho)$ to be a metric space and $f_n:X\rightarrow X$ to be a sequence of  maps, $n = 0, 1, 2,...$. Let $F = \{f_n\}_{n=0}^\infty$ be a time-varying map ( or a sequence of maps) on $X$. We denote
$F_n = f_n\circ f_{n-1}\circ...\circ f_1 \circ f_0$, for all $n = 0,1,2,...$

\begin{definition}
The metric space $(X,\rho)$ is said to be OE with respect to the time-varying map $F=\{f_n\}_{n=0}^\infty$ if for $x\in X$, there exists  $d_X>0$  such that for each $\epsilon>0$, we have
$F_n(S_\epsilon(x))\nsubseteq S_{d_X}(F_n(x)), \text{ for some }n$.

\end{definition}

\begin{definition}
The metric space $(X,\rho)$ is said to be ROE with respect to the time-varying map $F=\{f_n\}_{n=0}^\infty$ if for $x\in X$, there exists $\epsilon_x>0$ such that for each $0<\epsilon<\epsilon_x$, we have 
$F_n(S_\epsilon(x))\nsubseteq S_{\epsilon}(F_n(x)), \text{ for some }n$.
\end{definition}

\begin{thm}
Let $(X, \rho)$ be a metric space and $f_n:X\rightarrow X$ be a sequence of
maps, $n = 0, 1, 2,...$. Let us assume that the time-varying map $F=\{f_n\}_{n=0}^\infty$ be expansive. Then, the time-varying map $F$ is OE as well as ROE on $X$.
\end{thm}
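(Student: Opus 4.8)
The plan is to mimic the argument of Theorem 3.1$(1)$, together with the easy reduction used in Theorem 4.1$(2)$, now phrased for the composition operators $F_n = f_n\circ\cdots\circ f_0$. Let $c>0$ be an expansive constant for the time-varying map $F$, so that for every pair of distinct points $x,y\in X$ there is some $n\geq 0$ with $\rho(F_n(x),F_n(y))>c$.

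First I would establish the OE conclusion. Fix $x\in X$ and set $d_X=c$. Given an arbitrary $\epsilon>0$, choose a point $y\in S_\epsilon(x)$ with $y\neq x$; applying expansiveness to the pair $x,y$ yields an index $n$ with $\rho(F_n(x),F_n(y))>c=d_X$, i.e. $F_n(y)\notin S_{d_X}(F_n(x))$. Since $F_n(y)\in F_n(S_\epsilon(x))$, this gives $F_n(S_\epsilon(x))\nsubseteq S_{d_X}(F_n(x))$ for some $n$. As $\epsilon>0$ and $x$ were arbitrary, $X$ is OE with respect to $F$ with OE constant $c$.

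Next I would deduce the ROE conclusion, keeping the same $x$ and setting $\epsilon_x=c$. For any $\epsilon$ with $0<\epsilon<\epsilon_x$, repeat the step above: pick $y\neq x$ in $S_\epsilon(x)$ and obtain $n$ with $\rho(F_n(x),F_n(y))>c>\epsilon$, hence $F_n(y)\notin S_\epsilon(F_n(x))$ and so $F_n(S_\epsilon(x))\nsubseteq S_\epsilon(F_n(x))$ for some $n$. Equivalently, one may simply observe that $S_\epsilon(F_n(x))\subseteq S_{d_X}(F_n(x))$ whenever $\epsilon<d_X$ and quote the OE statement just proved. Thus $X$ is ROE with respect to $F$.

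The argument is essentially routine bookkeeping with the maps $F_n$; the only point requiring care is the tacit assumption, already present in Theorem 3.1, that each open sphere $S_\epsilon(x)$ contains a point distinct from $x$ (automatic when $X$ has no isolated points). No genuine obstacle is expected beyond this.
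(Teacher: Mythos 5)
Your proposal is correct and follows essentially the same route as the paper: pick any $y\neq x$ in $S_\epsilon(x)$, apply the expansive constant to the pair $x,y$ to violate containment in $S_{d_X}(F_n(x))$, and then pass to ROE by taking $\epsilon<d_X$ (the paper simply asserts this last step via the OE-implies-ROE observation, which you spell out). Your remark about isolated points is a fair caveat, but it applies equally to the paper's own argument.
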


\begin{proof}
Since the time-varying map $F$ is expansive on $X$. So, there exists an expansive constant $d_X>0$ such that for every $x,y\in X$, we have
$\rho(F_n(x),F_n(y))>d_X\text{, for some }n$.\\

Let $\epsilon>0$ be any real number, and $y(\neq x)\in S_\epsilon(x)$. Then, by definition of expansive map, we have $\rho(F_n(x),F_n(y))>d_X$, for some $n$. Therefore, there exists at least one point $y\in S_\epsilon(x)$ other than $x$ such that $\rho(F_n(x),F_n(y))>d_X, \text{ for some }n$. Thus, $ F_n(S_{\epsilon}(x))\nsubseteq S_{d_X} (F_n(x)), \text{ for some }n.$
Hence, the time-varying map $F$ is an OE map on $X$. Hence, it is ROE on $X$.
\end{proof}
\begin{example}
We consider the metric space $(\mathbb{R},\rho)$ with the usual metric $\rho$. Let us define a function $f:\mathbb{R}\rightarrow \mathbb{R}$ as follows:

$$ f_n(x) = \begin{cases} 
n+1 & \text{if } x \in \mathbb{Q}, \\
(n+1) x & \text{if } x \in \mathbb{Q}^\complement
\end{cases}$$

Then, $F=\{f_n\}^{\infty}_{n=0}$ is an OE map on $\mathbb{R}$, but not expansive on $\mathbb{R}$.

\end{example}

\begin{proof}
Let $x\in \mathbb{R}$. Then, $x\in \mathbb{Q}$ or $\mathbb{Q}^c$. Without loss of generality, suppose $x\in \mathbb{Q}$. Then, for each $\epsilon>0$ the open sphere $S_\epsilon(x)$ contains infinitely many points $y\in \mathbb{Q}^c$ such that
 $\rho(F_n(x),F_n(y))=\lvert F_n(x)-F_n(y)\rvert=\lvert (n+1)-(n+1)!x \rvert=(n+1)\lvert n!x-1 \rvert>d$, for some n.
 Thus, $F_n(S_{\epsilon}(x))\nsubseteq
S_d(F_n(x))$, for some n. Hence, $F$ is an OE map on $A$.\\

Let $x,y\in \mathbb{Q}\subset \mathbb{R}$. Then, $\rho (F_n(x), F_n(y))=\lvert F_n(x)-F_n(y)\rvert=0$.
Hence, $F$ is not expansive on $\mathbb{R}$.

\end{proof}  
\begin{thm}
Let $(X,\rho_1)$ and $(Y,\rho_2)$ be two OE metric spaces with respect to the time-varying map $F=\{f_n\}_{n=0}^\infty$ and $G=\{g_n\}_{n=0}^\infty$ respectively. Then, $(X\times Y, \rho)$ is OE with respect to time-varying map $F\times G=\{f_n\times g_n\}_{n=0}^\infty$ under the metric $\rho$ on $X\times Y$ defined by 
$\rho((x_1,y_1),(x_2,y_2))=max\{\gamma(\rho_1(x_1,x_2),\gamma(\rho_2(y_1,y_2)\}$,
where $\gamma$ is a non-negative real valued bounded function.

\end{thm}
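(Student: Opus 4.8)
The plan is to follow the template of Theorem 3.9, of which the present statement is the time-varying analogue, replacing the iterates $f^n$ and $g^n$ by the composition operators $F_n = f_n\circ\cdots\circ f_0$ and $G_n = g_n\circ\cdots\circ g_0$. The first thing to record is the identity $(F\times G)_n = F_n\times G_n$, which holds because $(f_n\times g_n)\circ\cdots\circ(f_0\times g_0) = (f_n\circ\cdots\circ f_0)\times(g_n\circ\cdots\circ g_0)$; this is exactly what lets the autonomous argument transfer essentially verbatim to the non-autonomous setting.

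Next I would fix a point $(x,y)\in X\times Y$ and $\epsilon>0$. Applying the OE hypothesis on $(X,\rho_1)$ yields a constant $d_X>0$ and, for the given $\epsilon$, a point $x'\in S_{\rho_1}(\epsilon,x)$ with $\rho_1(F_n(x),F_n(x'))>d_X$ for some $n$; symmetrically, the OE hypothesis on $(Y,\rho_2)$ yields $d_Y>0$ and $y'\in S_{\rho_2}(\epsilon,y)$ with $\rho_2(G_m(y),G_m(y'))>d_Y$ for some $m$. Since each $\gamma$-coordinate distance is controlled by $\epsilon$, the pair $(x',y')$ lies in the $\epsilon$-ball about $(x,y)$ in the product metric, and at an index where a factor separates one computes
\[
\rho\big((F\times G)_n(x,y),(F\times G)_n(x',y')\big)=\max\{\gamma(\rho_1(F_n(x),F_n(x'))),\gamma(\rho_2(G_n(y),G_n(y')))\}>\max\{\gamma(d_X),\gamma(d_Y)\}.
\]
Putting $d_{X\times Y}=\max\{\gamma(d_X),\gamma(d_Y)\}$ then gives $(F\times G)_n(S_\epsilon(x,y))\nsubseteq S_{d_{X\times Y}}((F\times G)_n(x,y))$ for some $n$, so $(X\times Y,\rho)$ is OE with respect to $F\times G$ with OE constant $d_{X\times Y}$.

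The point that needs care — and the only genuine obstacle — is the matching of the time index: the factor $X$ may separate the orbit of $x'$ at some $n$ while $Y$ separates the orbit of $y'$ at a different $m$, so one must either extract a single common index or instead weaken the constant to $d_{X\times Y}=\min\{\gamma(d_X),\gamma(d_Y)\}$ and use whichever factor separates. A secondary technical point is to make explicit the properties of $\gamma$ (non-negativity and boundedness, together with whatever vanishing-at-$0$ and monotonicity are implicitly used) needed both for $\rho$ to be a metric and for the implications $\rho_1(\cdot,\cdot)>d_X\Rightarrow\gamma(\rho_1(\cdot,\cdot))>\gamma(d_X)$ and $S_\epsilon(x,y)\supseteq S_{\epsilon'}(x)\times S_{\epsilon'}(y)$; these are precisely the assumptions already tacitly in force in Theorem 3.9, and I would invoke them in the same manner.
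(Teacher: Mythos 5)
Your proposal follows essentially the same route as the paper's own proof: record $(F\times G)_n=F_n\times G_n$, pick witnesses $x'\in S_{\rho_1}(\epsilon,x)$ and $y'\in S_{\rho_2}(\epsilon,y)$ from the OE hypotheses on the factors, and bound $\rho\big((F\times G)_n(x,y),(F\times G)_n(x',y')\big)$ below by $\max\{\gamma(d_X),\gamma(d_Y)\}$ to get the OE constant $d_{X\times Y}$. The two caveats you flag — that the factors may separate at different time indices, and that monotonicity-type properties of $\gamma$ are tacitly needed — are genuine, but the paper's proof glosses over them in exactly the same way (it silently uses one common $n$ and applies $\gamma$ monotonically), so your version, with the suggested fallback constant $\min\{\gamma(d_X),\gamma(d_Y)\}$ using whichever factor separates, is if anything the more careful rendering of the same argument.
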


\begin{proof}
    For any $n\geq 0$,
    $$(F\times G)_n(x,y)=(F_n(x),G_n(y)),\text{ for }(x,y)\in X\times Y$$

Since $(X,\rho_1)$ is OE with respect to $F$. So, for $x\in X$, there exists $d_X>0$ such that for each $\epsilon>0$, there exists some integer $n$ such that,
    $F_n(S_{\rho_1}(\epsilon,x))\nsubseteq S_{\rho_1}(d_X,F_n(x))$. So, there exists at least one point $x^{'}\in S_{\rho_1}(\epsilon, x)$ such that 
     $$\rho_1(F_n(x),F_n(x^{'}))>d_X ,\text{ for some } n$$
     
Since $(Y,\rho_2)$ is OE with respect to $G$. So for $y\in Y$, there exists $d_Y>0$ such that for each $\epsilon>0$, there exists some integer $n$ such that 
    $G_n(S_{\rho_2}(\epsilon,y))\nsubseteq S_{\rho_2}(d_Y,G_n(y))$. i.e., there exists at least one point $y^{'}\in S_{\rho_2}(\epsilon,x)$ such that 
     $$\rho_2(G_n(y),G_n(y^{'}))>d_Y ,\text{ for some }n$$
     $$\text{ Now, }\rho((F\times G)_n(x,y),(F\times G)_n(x^{'},y^{'}))=\rho((F_n(x),G_n(y)),(F_n(x^{'}),G_n(y^{'}))$$
     $$\hspace{9.4cm}=max\{\gamma(\rho_1(F_n(x),F_n(x^{'})),\gamma(\rho_2(G_n(y),G_n(y^{'}))\}$$
$$\hspace{8.9cm}>max\{\gamma(d_X),\gamma(d_Y)\}=d_{X\times Y},\text{ for some n}$$

Thus, $ (F\times G)_n(x^{'},y^{'})\notin S_\rho(d_{X\times Y},(F\times G)_n(x,y))$ implies $(F\times G)_n(S_\rho(\epsilon,(x,y))\nsubseteq S_\rho(d_{X\times Y},(F\times G)_n(x,y)),\text{ for some }n$. Hence, $(X\times Y,\rho)$ is OE with respect to the time-varying map $F\times G=\{f_n\times g_n\}_{n=0}^\infty$ with OE constant $d_{X\times Y}$.
\end{proof}

\begin{thm}
Let $(X,\rho)$ be an ROE metric space with respect to the time-varying map $F=\{f_n\}_{n=0}^\infty$ and $Y$ be an invariant subset of $X$. Then restriction of $F$ to $Y$ is ROE with respect to $F\vline Y$.
\end{thm}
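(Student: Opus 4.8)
The plan is to mirror the proof of Theorem~2.6 --- the exact analogue for expansive time-varying maps --- pushing the invariance of $Y$ through the ROE condition of Definition~5.2 instead of the expansive one. Concretely, what must be verified is that $(Y,\rho)$ together with $F|Y$ satisfies Definition~5.2: for every $y\in Y$ there is $\epsilon_y>0$ so that for each $0<\epsilon<\epsilon_y$ one has $(F|Y)_n(S_\epsilon(y))\nsubseteq S_\epsilon((F|Y)_n(y))$ for some $n$.

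First I would record what invariance buys. Since $f_n(Y)\subseteq Y$ for every $n\geq 0$, we get $F_n(Y)\subseteq Y$ for all $n$, so $F|Y=\{f_n|Y\}_{n=0}^{\infty}$ is a genuine time-varying map on $(Y,\rho)$ whose partial compositions satisfy $(F|Y)_n=f_n|Y\circ\cdots\circ f_1|Y\circ f_0|Y=F_n|Y$; in particular $(F|Y)_n(y)=F_n(y)\in Y$ for every $y\in Y$ and every $n$. This is the \emph{only} place invariance is used.

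Next, fix $y\in Y$. Because $(X,\rho)$ is ROE with respect to $F$, Definition~5.2 gives $\epsilon_y>0$ such that for each $0<\epsilon<\epsilon_y$ there is an $n$ with $F_n(S_\epsilon(y))\nsubseteq S_\epsilon(F_n(y))$, i.e.\ some $z\in S_\epsilon(y)$ has $\rho(F_n(z),F_n(y))\geq\epsilon$. Rewriting $F_n(y)$ as $(F|Y)_n(y)$ by the previous step and intersecting the source ball with $Y$, this becomes $(F|Y)_n\big(S_\epsilon(y)\cap Y\big)\nsubseteq S_\epsilon\big((F|Y)_n(y)\big)$ for some $n$, with the same threshold $\epsilon_y$. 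As $y$ was arbitrary, $F|Y$ is ROE on $Y$.

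The point that needs the most care is upgrading the witness $z$: the ROE hypothesis on $X$ only places $z$ in the ambient ball $S_\epsilon(y)$, which a priori need not meet $Y$. If ``$F|Y$ is ROE'' is read with balls in the subspace $(Y,\rho)$ and with the witness required to lie in $Y$ (as in Definition~4.2), then one needs an extra ingredient to relocate the witness inside $Y$ --- for instance that every point of $Y$ is a limit point of $Y$ together with the infinitely-many-witnesses phenomenon for ROE maps (cf.\ Theorem~4.7). Under the reading consistent with Theorem~2.6, where all the restriction needs is $(F|Y)_n(y)=F_n(y)$ and the failure of inclusion is inherited verbatim, the two steps above already finish the argument, and I expect this to be the intended proof.
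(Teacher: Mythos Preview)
Your approach is essentially the paper's: pick $y\in Y$, invoke the ROE hypothesis on $X$ to obtain $\epsilon_y$ and the failure $F_n(S_\epsilon(y))\nsubseteq S_\epsilon(F_n(y))$, then use invariance to rewrite $F_n(y)=(F|Y)_n(y)$ and conclude. The subtlety you flag about whether the witness lies in $Y$ is not addressed in the paper either --- its proof simply records that invariance yields $F_n(y)\in Y$ and $F_n(S_\epsilon(y)\cap Y)\subset Y$ and then asserts the conclusion --- so your third paragraph is more scrupulous than the original, but the underlying argument is identical.
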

\begin{proof}
Let $y\in Y$. Then, $y\in X$. Since $(X,\rho)$ is ROE, there exists $\epsilon_y>0$ such that for each $0<\epsilon<\epsilon_y$, there exists some $n$ such that $F_n(S_\epsilon(y))\nsubseteq S_\epsilon(F_n(y))$.\\

Since $Y$ is an invariant subset of $X$. So for each $y\in Y$, we have $F_n(y)\in Y$. Hence, $F_n(S_\epsilon(y)\cap Y)\subset Y\text{ for each }\epsilon<\epsilon_y$. So, 
$(Y,\rho)$ is OE with respect to $F\vline Y$.
\end{proof}
\begin{thm}
Let $(X,\rho_1)$ and $(Y,\rho_2)$ be two metric spaces. If $g$ is a homeomorphism of $X$ onto $Y$ such that $g^{-1}$ is uniformly continuous, then $gFg^{-1}$ is a ROE of $Y$, where $F=\{f_n\}^{\infty}_{n}$ is a ROE map on $X$.
\end{thm}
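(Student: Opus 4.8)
The plan is to run the time-varying analogue of the argument for Theorem 4.6, the one genuinely new ingredient being the way the composition $F_n$ interacts with conjugation. So the first step is to record the telescoping identity: writing $gFg^{-1}$ for the time-varying map $\{g f_n g^{-1}\}_{n=0}^{\infty}$ on $Y$, one has
$$(gFg^{-1})_n \;=\; (g f_n g^{-1})\circ(g f_{n-1}g^{-1})\circ\cdots\circ(g f_0 g^{-1}) \;=\; g\circ F_n\circ g^{-1},$$
because each internal pair $g^{-1}\circ g$ collapses to the identity. Consequently the $n$-th iterate of the conjugated system is exactly the $g$-conjugate of $F_n$, and the problem reduces, for each $n$, to transporting the property ``$F_n$ spreads an $X$-ball'' to the property ``$gF_ng^{-1}$ spreads the corresponding $Y$-ball'', which is precisely the static computation carried out in Theorem 4.6.

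Second, fix $y\in Y$ and set $x=g^{-1}(y)$. Since $F$ is ROE on $X$, there is $\epsilon_x>0$ such that for every $0<\delta<\epsilon_x$ there is a witness $x'\in S_\delta(x)$ and an $n$ with $\rho_1(F_n(x),F_n(x'))\geq\delta$. Using the uniform continuity of $g^{-1}$ I would fix a threshold $\epsilon_y>0$ (depending on $\epsilon_x$, on $x$, and on the modulus of $g^{-1}$) with the following property: every $\epsilon$ with $0<\epsilon<\epsilon_y$ admits a radius $\delta<\epsilon_x$ for which simultaneously (a) $g(S_\delta(x))\subseteq S_\epsilon(y)$, via continuity of $g$ at $x$, and (b) $\rho_1(a,b)\geq\delta$ implies $\rho_2(g(a),g(b))\geq\epsilon$ for all $a,b\in X$, which is exactly the contrapositive of the uniform continuity of $g^{-1}$ read at tolerance $\delta$. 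Then for such an $\epsilon$ the witness $x'\in S_\delta(x)$ supplied by the ROE property of $F$ yields $y'=g(x')\in S_\epsilon(y)$, and
$$\rho_2\big((gFg^{-1})_n(y),(gFg^{-1})_n(y')\big)\;=\;\rho_2\big(g(F_n(x)),g(F_n(x'))\big)\;\geq\;\epsilon,$$
so $(gFg^{-1})_n(y')\notin S_\epsilon((gFg^{-1})_n(y))$, i.e. $(gFg^{-1})_n(S_\epsilon(y))\nsubseteq S_\epsilon((gFg^{-1})_n(y))$ for some $n$. As $y$ was arbitrary, $gFg^{-1}$ is ROE on $Y$.

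I expect the only real obstacle to be the simultaneous bookkeeping of the three radii appearing above — the $Y$-sphere radius $\epsilon$, the $X$-sphere radius $\delta$ constrained by the ROE constant $\epsilon_x$ of $F$ at $x$, and the modulus of uniform continuity of $g^{-1}$ — which must be coordinated so that the $X$-witness maps into the prescribed $Y$-ball while its orbit still separates by at least $\epsilon$ in $Y$. This is precisely the coordination carried out in the proof of Theorem 4.6, and I would carry it out here in the same fashion: fix $\epsilon$, use continuity of $g$ at $x$ to keep the image ball inside $S_\epsilon(y)$, and let the modulus of $g^{-1}$ dictate the choice of $\epsilon_y$ so that the $\delta$-separation produced in $X$ upgrades to an $\epsilon$-separation in $Y$. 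Beyond that, the argument is a direct transcription of the static case via the identity $(gFg^{-1})_n=gF_ng^{-1}$, together with the observation that the existential quantifier ``for some $n$'' passes through unchanged; this mirrors the role of Theorem 4.6 relative to the expansive conjugacy theorem for time-varying maps.
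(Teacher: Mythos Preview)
Your proposal is correct and follows essentially the same route as the paper's proof: both establish the identity $(gFg^{-1})_n=g\circ F_n\circ g^{-1}$ (you state it explicitly, the paper uses it tacitly) and then repeat verbatim the static conjugacy argument of Theorem~4.6, using the contrapositive of the uniform continuity of $g^{-1}$ to upgrade the $\delta$-separation of $F_n$-orbits in $X$ to an $\epsilon$-separation of $(gFg^{-1})_n$-orbits in $Y$. Your write-up is in fact a bit more careful than the paper's, since you explicitly invoke continuity of $g$ at $x$ to ensure the witness $y'=g(x')$ lands in $S_\epsilon(y)$, a step the paper glosses over; but the underlying strategy and all the essential ingredients coincide.
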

\begin{proof}
Let $y\in Y$ and $0<\epsilon<\epsilon_y$, where $\epsilon_y$ is fixed. We have to show there exists at least one point $y_1\in S_\epsilon(y)$ such that $(gFg^{-1})^n(y_1)\notin S_\epsilon{(gFg^{-1})^n(y)}$
$\text{i.e., we have to show }(gF_ng^{-1})(y_1)\notin S_\epsilon{(g\phi^n g^{-1})(y)}$, for some $n$.\\

Since $g^{-1}$ is uniformly continuous, there exists $\epsilon>0$ such that $\rho_2(g(x_1),g(x_2))\leq\epsilon$ implies $\rho_1(x_1,x_2)<\delta$.
$\text{Thus, } \rho_1(x_1,x_2)>\delta$ implies $\rho_2(g(x_1),g(x_2))>\epsilon$.\\

Let $y_1\in S_\epsilon(y)$. Then, $g^{-1}(y_1)= x_1\text{ and }g^{-1}(y)= x\in X$. So,  $\rho_1(g^{-1}(y_1),g^{-1}(y))>\delta$ implies $\rho_2(g(x_1),g(x_2))>\epsilon$.\\

Given, $F$ is ROE on $X$. So, for $g^{-1}(y)=x$, there exists $\epsilon_x>0$ such that for each $0<\delta<\epsilon_x$, we have 
$F_n(S_\delta(x))\nsubseteq S_\delta(F_n(x))\text{, for some }n$. $\text{i.e., there exists at least one point } x_1\in S_\delta(x) \text{ such that, }$ 
$\rho_1(F_n(x),F_n(x_1))>\delta, \text{ for some }n$.\\

Now, $\rho_1(F_n(g^{-1}(y),F_n(g^{-1}(y_1))>\delta$ implies
 $\rho_1((F_n g^{-1})(y),(F_n g^{-1})(y_1))>\delta,\text{ for some }n$. Hence, $ 
\rho_2(g(F_n g^{-1})(y),g(F_n g^{-1})(y_1))>\epsilon$ implies $\rho_2((gF_n g^{-1})(y),(gF_n g^{-1})(y_1))>\epsilon,\text{ for some }n$. Therefore, $ gF_n g^{-1}(y_1)\notin S_\epsilon{(gF_n g^{-1})(y)},\text{ for some }n$
implies $y$ is a ROE point on $Y$ with respect  $gFg^{-1}$.
Hence, $gFg^{-1}\text{ is a ROE on }Y$.
\end{proof}

\begin{thm}
Let $(X,\rho_1)$ and $(Y,\rho_2)$ be two metric spaces. Let $F=\{f_n\}_{n=0}^\infty$ and $G=\{g_n\}_{n=0}^\infty$ be time-varying maps on $X$ and $Y$, respectively, such that $F$ is uniformly conjugate to $G$. If $(X,\rho_1)$ is ROE with respect to $F$, then $(Y,\rho_2)$ is ROE with respect to $G$.
\end{thm}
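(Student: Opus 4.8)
The plan is to reduce the statement to the preceding theorem — the one asserting that $gFg^{-1}$ is ROE on $Y$ whenever $g\colon X\to Y$ is a homeomorphism with $g^{-1}$ uniformly continuous and $F$ is ROE on $X$ — by recognising $G$ as a conjugate of $F$. First I would unpack "uniformly conjugate": there is a uniform homeomorphism $h\colon X\to Y$ with $h\circ f_n=g_n\circ h$ for every $n$, so that both $h$ and $h^{-1}$ are uniformly continuous. A one-line induction on $n$ lifts the intertwining relation from the single maps to the partial compositions, giving $h\circ F_n=G_n\circ h$, i.e. $G_n=h\circ F_n\circ h^{-1}$ for all $n\ge 0$. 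Since also $g_i=h\circ f_i\circ h^{-1}$ for each $i$, this says precisely that, as time-varying maps, $G=hFh^{-1}$.

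With this identification, the theorem is immediate: $h$ is a homeomorphism of $X$ onto $Y$ whose inverse is uniformly continuous (being a uniform homeomorphism), and $F$ is ROE on $X$ by hypothesis, so the preceding theorem applies and yields that $hFh^{-1}=G$ is a ROE map on $Y$; equivalently $(Y,\rho_2)$ is ROE with respect to $G$.

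If instead a self-contained argument is wanted, I would replay the proof of that preceding theorem with $g=h$. Fix $y\in Y$ and put $x=h^{-1}(y)$; since $(X,\rho_1)$ is ROE with respect to $F$, there is $\epsilon_x>0$ such that for every $0<\delta<\epsilon_x$ some iterate pulls a point of $S_\delta(x)$ off $S_\delta(F_n(x))$. Using uniform continuity of $h$ I would fix $\epsilon_y$ so small that for each $0<\epsilon<\epsilon_y$ there is $\delta=\delta(\epsilon)<\epsilon_x$ with $h(S_\delta(x))\subseteq S_\epsilon(y)$; applying the ROE property of $X$ at radius $\delta$ produces $x_1\in S_\delta(x)$ and an $n$ with $\rho_1(F_n(x),F_n(x_1))$ large, and then $y_1:=h(x_1)\in S_\epsilon(y)$. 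Finally, uniform continuity of $h^{-1}$ converts the $\rho_1$-separation of $F_n(x)$ and $F_n(x_1)$ into a $\rho_2$-separation of $G_n(y)=h(F_n(x))$ and $G_n(y_1)=h(F_n(x_1))$ exceeding $\epsilon$, so $G_n(S_\epsilon(y))\nsubseteq S_\epsilon(G_n(y))$, and $y$ is a ROE point with respect to $G$.

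The only delicate point — in either route, though visible only in the second — is the bookkeeping of constants: one must ensure that the separation transported to $Y$ genuinely beats the ball radius $\epsilon$, not merely the (possibly smaller) modulus-of-continuity value coming out of $\delta$. This is exactly where uniform continuity of $h^{-1}$, rather than mere continuity, is essential, which is why the cleanest write-up is the first route, invoking the preceding theorem after checking $G=hFh^{-1}$; the induction establishing that identity is the one genuinely new computation, and I expect it to be entirely routine.
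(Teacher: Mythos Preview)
Your proposal is correct. The first route you outline --- observing that uniform conjugacy gives $G_n=h\circ F_n\circ h^{-1}$ (via the routine induction you mention) and then invoking the preceding theorem with $g=h$ --- is a genuinely cleaner argument than the one in the paper. The paper does not make this reduction; instead it argues directly, essentially replaying the proof of that preceding theorem with $h$ in place of $g$: it unpacks uniform continuity of $h^{-1}$ into a contrapositive ``$\rho_1$-separation $\Rightarrow$ $\rho_2$-separation'' statement, picks $y_1\in S_\epsilon(y)$, applies the expansiveness hypothesis on $X$ to obtain $\rho_1(F_n(x),F_n(x_1))$ large, and pushes this through $h$ to get $\rho_2(G_n(y),G_n(y_1))>\epsilon$. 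So the paper's argument is exactly your second route, written out in full. Your first route buys brevity and modularity (the theorem becomes an immediate corollary once $G=hFh^{-1}$ is recognised); the paper's direct approach is self-contained but duplicates work already done. Your remark that the delicate bookkeeping of constants is absorbed into the preceding theorem is well taken and is precisely why the reduction is preferable.
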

\begin{proof}
Since $F$ is uniformly conjugate to $G$, thus there exists a uniform homeomorphism $h:X\rightarrow Y$ such that $h\circ f_n=g_n\circ h$ for all $n\geq 0$ which implies $h\circ F_n=G_n\circ h$ for all $n\geq 0$.\\

Since $h$ is a uniform homeomorphism, $h^{-1}$ is uniformly continuous, there exists $\epsilon>0$ such that, 
$\rho_2(h(x_1),h(x_2))\leq\epsilon\text{ implies }\rho_1((x_1),(x_2))<d_X$. $\text{ Thus, }\rho_1(x_1,x_2)>d_X$ implies $\rho_2(h(x),h(y))>\epsilon$.\\

Let $y_1\in S_\epsilon(y)$ and $0<\epsilon<\epsilon_y$, where $\epsilon_y$ is fixed. Then $h(x_1)=y_1$ and $h(x)=y \in Y$. So, 
$\rho_1(x,x_1)>d_X$ implies $\rho_2(h(x),h(x_1))>\epsilon$.
As $F$ is OE on $X$, for $x\in X$, there exists $d_X>0$ such that for each $\epsilon>0$, we have  $F_n(S_{\rho_1}(\epsilon,x))\nsubseteq S_{\rho_1}(d_X,F_n(x)),\text{ for some }n$. i.e., there exists at least one point $x_1\in S_{\rho_1}(\epsilon,x)$ such that $\rho_1(F_n(x),F_n(x_1))>d_X, \text{ for some }n$. It implies $\rho_2(h(F_n(x),h(F_n(x_1))>\epsilon, \text{ for some }n$. We can write it as $\rho_2(G_n(h(x),G_n(h(x_1))>\epsilon, \text{ for some }n$. So, it implies $\rho_2(G_n(y),G_n(y_1))>\epsilon, \text{ for some }n$. Therefore, $G_n(S_\epsilon(y))\nsubseteq S_\epsilon(G_n(y)), \text{ for some }n$.\\

Hence, $(Y,\rho_2)$ is ROE with respect to $G$.
\end{proof}
 \section{Discussion}
Utz \cite{Utz} introduced an expansive map that, after a certain number of iterations, separates every unique point on a metric space by a uniform constant. Kato \cite{HK}, on the other hand, developed a CW-expansive map for each non-degenerate subcontinuum on a metric space, which is a more inclusive definition of an expansive map. This study aims to establish a definition for the expansiveness of the orbit of a point on a metric space in relation to a function. We refer to such points as OE (ROE) points. Furthermore, if all points in a set are OE (ROE) points, we classify it as an OE (ROE) set. Our notion encompasses a broader category of chaotic maps than the one encompassed by expansive maps. We demonstrate numerous properties regarding the OE(ROE) maps using a novel approach, resulting in a plethora of new results and instances. An expansive map is defined by R. Das \cite{RD} in a dynamical system that varies over time. We establish the OE(ROE) notions in time-varying dynamical system and provide several outcomes of an expansive map in the form of OE(ROE) notion.
 From this paper, we conclude the below relationship:
\begin{equation}\label{i}
    \text{Expansive map}\implies \text{CW-expansive map}\implies\text{ OE map}\implies \text{ ROE map}
\end{equation}

 The following question remain open:\\

 $(1)$ Does the relation $(9)$ become equivalent under some conditions?\\

 $(2)$  Does an OE(ROE) homeomorphism  exclusively follow the flow $(9)$, not vice-versa?

\end{document}